\documentclass[11pt]{amsart}

\usepackage[T1]{fontenc}

\usepackage[utf8]{inputenc}
\usepackage{lmodern}
\usepackage[english]{babel}
\usepackage[autostyle]{csquotes}

\usepackage[pdfusetitle,linktocpage,colorlinks=false]{hyperref}
\usepackage{doi}
\usepackage{filecontents}
\usepackage{enumerate}

\usepackage{amsmath,amssymb,amsthm,amscd}
\usepackage{pictexwd,dcpic}
\newtheorem{thm}{Theorem}[section]
\newtheorem{prop}[thm]{Proposition}
\newtheorem{lem}[thm]{Lemma}
\newtheorem{rem}[thm]{Remark}
\newtheorem{cor}[thm]{Corollary}

\newtheorem{defn}[thm]{Definition}
\newtheorem{que}[thm]{Question}
\usepackage[colorinlistoftodos]{todonotes}

\newtheorem{thmx}{Theorem}
\newtheorem{corx}[thmx]{Corollary}

\AtEndDocument{\bigskip{\footnotesize%
  \textsc{Institute of Mathematics of the Polish Academy of Sciences, \'{S}niadeckich 8, 00-656 Warsaw, Poland} \par  
  \textit{E-mail address}: \texttt{ychung@impan.pl} \par
  \addvspace{\medskipamount}
  \textsc{Institute of Mathematics of the Polish Academy of Sciences, \'{S}niadeckich 8, 00-656 Warsaw, Poland} \par  
  \textit{E-mail address}: \texttt{kli@impan.pl} \par
}}

\begin{document}
\title[Structure and $K$-theory of $\ell^p$ uniform Roe algebras]{Structure and $K$-theory of $\ell^p$ uniform Roe algebras}
\author{Yeong Chyuan Chung \and Kang Li}

\thanks{Both authors are supported by the European Research Council (ERC-677120).}

\date{\today}
\maketitle

\begin{abstract}
In this paper, we characterize when the $\ell^p$ uniform Roe algebra of a metric space with bounded geometry is (stably) finite and when it is properly infinite in standard form for $p\in [1,\infty)$. Moreover, we show that the $\ell^p$ uniform Roe algebra is a (non-sequential) spatial $L^p$ AF algebra in the sense of Phillips and Viola if and only if the underlying metric space has asymptotic dimension zero. 

We also consider the ordered $K_0$ groups of $\ell^p$ uniform Roe algebras for metric spaces with low asymptotic dimension, showing that (1) the ordered $K_0$ group is trivial when the metric space is non-amenable and has asymptotic dimension at most one, and (2) when the metric space is a countable locally finite group, the (ordered) $K_0$ group is a complete invariant for the (bijective) coarse equivalence class of the underlying locally finite group. It happens that in both cases the ordered $K_0$ group does not depend on $p\in [1,\infty)$.
\end{abstract}
\parskip 8pt

\noindent\textit{Mathematics Subject Classification} (2010):  46H20, 46L80, 54F45. \\
\tableofcontents

\section{Introduction}
$\ell^2$ uniform Roe algebras are $C^*$-algebras associated with discrete bounded geometry metric spaces, and which reflect coarse geometric properties of the underlying metric spaces. These algebras have been well-studied and provide a link between coarse geometry of metric spaces and the structure theory of $C^*$-algebras (e.g., \cite{amenUR, MR1876896, MR1739727, MR3158244, LL, lowdimUR, MR1763912, MR2873171, MR3784048, MR1905840, Wei, WZ}).

In recent years, there has been an uptick in interest in the $\ell^p$ version of uniform Roe algebras for $p\in [1,\infty)$ from both the operator theory community and the coarse geometry community (e.g., \cite{CL, MR3451966, MR3926163, MR3212726, MR3151282, vspakula2017metric, vspakula2018quasi, Zhang18}). Moreover, $\ell^p$ uniform Roe algebras belong to the class of $L^p$ operator algebras in the sense of N. Christopher Phillips, and the structure and $K$-theory of certain $L^p$ operator algebras have been studied (e.g., \cite{phible, Chung2, Chung1, PhilOpen, Phil12, Phil13, PhilSimplicity, PhilViola}). Thus it is natural to study the structure and $K$-theory for $\ell^p$ uniform Roe algebras of metric spaces with bounded geometry.
\begin{defn}
A metric space $(X,d)$ has bounded geometry if for any $R>0$, there exists $N_R>0$ such that $|B_R(x)|\leq N_R$ for all $x\in X$, where $B_R(x)$ denotes the closed ball of radius $R$ centered at $x$.
\end{defn}

Let $(X,d)$ be a metric space with bounded geometry so that $(X,d)$ is necessarily countable and discrete. For a bounded operator $T=(T_{xy})_{x,y\in X}\in B(\ell^p(X))$, where $T_{xy}=(T\delta_y)(x)$, we define the propagation of $T$ to be
\[ \mathrm{prop}(T):=\sup\{ d(x,y):x,y\in X,T_{xy}\neq 0 \}\in[0,\infty]. \]

\begin{defn}
Let $(X,d)$ be a metric space with bounded geometry. For $1\leq p<\infty$, the associated $\ell^p$ uniform Roe algebra, denoted by $B^p_u(X)$, is defined to be the operator norm closure of the algebra of all bounded operators on $\ell^p(X)$ with finite propagation.
\end{defn}
\parskip 6pt

In Section 2, we investigate when the $\ell^p$ uniform Roe algebra $B^p_u(X)$ of a metric space $X$ with bounded geometry is (stably) finite and when it is properly infinite in standard form as a unital Banach algebra (as defined in \cite[Definition 1.1]{Laustsen} and \cite[Definition 1.1]{DLR}).

The (stable) finiteness of $B^p_u(X)$ can be characterized in terms of quasidiagonal set of operators on $\ell^p(X)$ in the sense of Halmos (see \cite{MR0270173}) and coarse connected components of the metric space $X$. Recall that two elements $x$ and $y$ in a metric space $(X,d)$ are said to be $R$-connected for some $R>0$ if there is a finite sequence $x_0,\ldots,x_n$ in $X$ with $x_0=x$, $x_n=y$, and $d(x_i,x_{i+1})\leq R$ for $i=0,\ldots,n-1$. This is an equivalence relation on $X$, and the equivalence classes are called the $R$-connected components of $X$. We are able to show the following theorem, which extends \cite[Theorem 2.3]{Wei} from the $p=2$ case to any $p\in[1,\infty)$.
\begin{thmx}[see Theorem \ref{thmFinite}]
Let $(X,d)$ be a metric space with bounded geometry, and let $p\in[1,\infty)$. The following are equivalent:
\begin{enumerate}
\item $B^p_u(X)$ is a quasidiagonal set in $B(\ell^p(X))$.
\item $B^p_u(X)$ is stably finite.
\item $B^p_u(X)$ is finite.
\item For each $R>0$, every $R$-connected component of $X$ is finite.

\end{enumerate}
\end{thmx}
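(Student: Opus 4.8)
The plan is to prove the cycle of implications $(4)\Rightarrow(1)\Rightarrow(2)\Rightarrow(3)\Rightarrow(4)$, so that the only trivial step, $(2)\Rightarrow(3)$ (take the $1\times 1$ case in the definition of stable finiteness), carries none of the work, and the two substantive directions are $(4)\Rightarrow(1)$ and $(1)\Rightarrow(2)$. For $(4)\Rightarrow(1)$ I would exhibit an explicit quasidiagonalizing net built from the coarse components. The key observation is that if $T\in B(\ell^p(X))$ has $\mathrm{prop}(T)\le R$ then $T_{xy}=0$ whenever $x,y$ lie in different $R$-connected components; hence $T$ is block diagonal with respect to the partition of $X$ into $R$-connected components, and the coordinate projection $P_F$ onto any finite union $F$ of $R$-connected components satisfies $[P_F,T]=0$ exactly. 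Enumerating $X=\{x_1,x_2,\dots\}$, I set $F_n$ to be the union of the $n$-connected components of $x_1,\dots,x_n$; each such component is finite by $(4)$, so $F_n$ is finite, and since the $(n+1)$-connected component of a point contains its $n$-connected component one checks that $F_n\subseteq F_{n+1}$ and $\bigcup_n F_n=X$. For any $T$ with $\mathrm{prop}(T)\le R$ and any integer $n\ge R$, the set $F_n$ is a union of $n$-connected components, hence also of $R$-connected components, so $[P_{F_n},T]=0$; a routine $\varepsilon/3$-approximation using $\|P_{F_n}\|=1$ upgrades this to $\|[P_{F_n},T]\|\to 0$ for every $T\in B^p_u(X)$, which is exactly quasidiagonality of $B^p_u(X)$.

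For $(1)\Rightarrow(2)$ I would run the classical ``quasidiagonal implies stably finite'' argument, adapted to the adjoint-free $\ell^p$ setting. First, amplifying a quasidiagonalizing net $(P_n)$ of norm-one finite-rank idempotents to $\mathrm{diag}(P_n,\dots,P_n)$ shows that every matrix amplification $M_k(B^p_u(X))\subseteq B(\ell^p(X)^{\oplus k})$ is again quasidiagonal, so it suffices to prove that $ab=1$ implies $ba=1$ for $a,b$ quasidiagonalized by a common such net $(P_n)$. Writing $a_n=P_naP_n$ and $b_n=P_nbP_n$ as operators on the finite-dimensional space $P_n\ell^p(X)$, the commutator estimates give $\|(I-P_n)aP_n\|,\|(I-P_n)bP_n\|\to 0$ and hence $\|a_nb_n-P_n\|\to 0$. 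For large $n$ this forces $a_nb_n$ to be invertible on $P_n\ell^p(X)$, so $a_n$ is invertible there, and $b_na_n=P_n+a_n^{-1}(a_nb_n-P_n)a_n$; the uniform bounds $\|a_n\|\le\|a\|$ and $\|a_n^{-1}\|\le\|b\|/(1-\|a_nb_n-P_n\|)$ then yield $\|b_na_n-P_n\|\to 0$. Undoing the compressions gives $\|P_nbaP_n-P_n\|\to 0$, and since $P_n\to 1$ strongly while $P_nbaP_n\to ba$ strongly, uniqueness of strong limits forces $ba=1$.

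The direction $(3)\Rightarrow(4)$ I would prove by contraposition. If some $R$-connected component $C$ is infinite, then the graph on $C$ with edges joining points at distance $\le R$ is connected, infinite, and, by bounded geometry, locally finite, so K\"onig's lemma produces an infinite simple ray $x_0,x_1,x_2,\dots$ of distinct points with $d(x_i,x_{i+1})\le R$. On $Y=\{x_0,x_1,\dots\}$ the forward and backward shifts $T\delta_{x_i}=\delta_{x_{i+1}}$ and $S\delta_{x_{i+1}}=\delta_{x_i}$ with $S\delta_{x_0}=0$ satisfy $ST=I_{\ell^p(Y)}\neq TS$; setting $b=T\oplus\mathrm{id}$ and $a=S\oplus\mathrm{id}$ on $\ell^p(X)=\ell^p(Y)\oplus\ell^p(X\setminus Y)$ produces $a,b\in B^p_u(X)$ of propagation $\le R$ with $ab=1$ but $ba\neq 1$, so $B^p_u(X)$ is not finite.

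I expect the main obstacle to be the adjoint-free compression argument in $(1)\Rightarrow(2)$: in $B(\ell^p)$ there are no orthogonal complements and no $*$-operation, so the contractivity and orthogonality one would invoke over a $C^*$-algebra must be replaced by hand. Concretely, I must ensure that the quasidiagonalizing idempotents are uniformly bounded (which holds precisely because I use the coordinate projections, with $\|P_F\|=1$) and that the finite-dimensional inverses $a_n^{-1}$ are uniformly bounded in $n$; these two uniform estimates are what make the passage from the approximate identity $a_nb_n\approx P_n$ to $b_na_n\approx P_n$, and then to $ba=1$, go through for all $p\in[1,\infty)$.
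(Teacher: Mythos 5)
Your proof is correct, and its skeleton coincides with the paper's: the same cycle of implications (your (4)$\Rightarrow$(1)$\Rightarrow$(2)$\Rightarrow$(3)$\Rightarrow$(4) is the paper's (iv)$\Rightarrow$(i)$\Rightarrow$(ii)$\Rightarrow$(iii)$\Rightarrow$(iv)), coordinate projections onto increasing finite unions that eventually commute \emph{exactly} with finite-propagation operators, diagonal amplification of the quasidiagonalizing idempotents to handle stable finiteness, and a unilateral shift along an infinite ray for (3)$\Rightarrow$(4). The one genuinely different ingredient is how you pass from quasidiagonality to finiteness. The paper isolates a standalone fact (Proposition \ref{QDprop}): a single left-invertible quasidiagonal operator $S$ on a Banach space is invertible; its proof works with one idempotent $P$ at a time, using the Neumann series built from $W=P-TPSP$ to show $PSP$ is injective (hence surjective) on the finite-dimensional space $PE$, then deducing that $S$ has dense range, which is closed since $S$ is bounded below; finiteness of each $M_n(B^p_u(X))$ then follows from the left-invertibility criterion of Lemma \ref{finite}. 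You instead compress the pair $(a,b)$ with $ab=1$ by the whole net $(P_n)$, use finite-dimensional invertibility of $a_n$ together with the uniform bound $\|a_n^{-1}\|\le \|b\|/(1-\|a_nb_n-P_n\|)$ to pass from $a_nb_n\approx P_n$ to $b_na_n\approx P_n$, and conclude $ba=1$ by uniqueness of strong limits; all of these estimates check out. What the paper's route buys is a more general, reusable statement (quasidiagonality of the single operator suffices, not of the algebra or of a common net); what yours buys is that everything stays inside finite-dimensional linear algebra, with no closed-range/bounded-below argument. Two cosmetic remarks: in (4)$\Rightarrow$(1) you should also record that $P_{F_n}\to 1$ strongly (immediate since $F_n\nearrow X$), because the definition of a quasidiagonal set also demands $\|P\xi-\xi\|<\varepsilon$; and your K\"onig's-lemma extraction of a ray plays the role of the paper's appeal to Lemma \ref{WeiLem2.4} in both (iii)$\Rightarrow$(iv) and (iv)$\Rightarrow$(i) --- the content is the same.
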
 
For the proper infiniteness in standard form of $B^p_u(X)$, we give many different characterizations involving $L^p$ Cuntz algebras $\mathcal{O}^p_2$ introduced by Phillips in \cite{Phil12} (see Definition \ref{Op_d}), the algebraic $K_0$ group of $B^p_u(X)$, normalized traces on $B^p_u(X)$, and paradoxical decompositions of $X$. The following theorem is an analog of \cite[Theorem 4.9]{amenUR}, which only deals with the $p=2$ case:

\begin{thmx}[see Theorem \ref{nonamenThm}]
Let $(X,d)$ be a metric space with bounded geometry, and let $p\in[1,\infty)$. The following are equivalent:
\begin{enumerate}
\item $B^p_u(X)$ is properly infinite in standard form: there exists an idempotent $e$ in $B^p_u(X)$ such that $e\sim 1\sim 1-e$, where $\sim$ denotes algebraic equivalence.

\item $X$ admits a paradoxical decomposition: there is a partition $X=X_+\sqcup X_-$ such that there exist two bijections $t_\pm:X\rightarrow X_\pm$ satisfying $\sup_{x\in X}d(x,t_\pm(x))<\infty$.
\item There is a unital isometric embedding of $\mathcal{O}^p_2$ into $B^p_u(X)$.
\item $[1]_0=[0]_0$ in the algebraic $K_0$ group $K_0(B^p_u(X))$.
\item $B^p_u(X)$ has no normalized trace.
\item There is no unital linear functional $\psi$ of norm one on $B(\ell^p(X))$ such that $\psi(aT)=\psi(Ta)$ for all $T\in B(\ell^p(X))$ and all $a\in B^p_u(X)$.
\end{enumerate}
\end{thmx}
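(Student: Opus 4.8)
The plan is to prove all six conditions equivalent by running the cycle $(1)\Rightarrow(4)\Rightarrow(5)\Rightarrow(2)\Rightarrow(3)\Rightarrow(1)$ and then establishing $(5)\Leftrightarrow(6)$ separately. The conceptual engine is a Tarski-type dichotomy: the failure of $(2)$ (no paradoxical decomposition) should be equivalent to coarse amenability of $X$, i.e. to the existence of a F\o lner sequence. Since that combinatorial fact is purely about the metric space and does not see $p$, I would import it from the $p=2$ theory in \cite{amenUR}. The genuinely $\ell^p$-flavoured work then sits in the operator-algebraic implications $(2)\Rightarrow(3)\Rightarrow(1)$ and in producing traces and hypertraces from F\o lner sets.

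The $K$-theoretic and trace steps are short. For $(1)\Rightarrow(4)$: since $e$ and $1-e$ are orthogonal idempotents, $[1]_0=[e]_0+[1-e]_0$ in $K_0(B^p_u(X))$, and $e\sim 1$, $1-e\sim 1$ give $[1]_0=[1]_0+[1]_0$, whence $[1]_0=[0]_0$. For $(4)\Rightarrow(5)$: a normalized trace $\tau$ induces a homomorphism $\tau_*\colon K_0(B^p_u(X))\to\mathbb{C}$ with $\tau_*[1]_0=\tau(1)=1\neq 0=\tau_*[0]_0$, contradicting $(4)$, so no normalized trace exists. The operator-algebraic core is $(2)\Rightarrow(3)$: from $X=X_+\sqcup X_-$ and the bounded-displacement bijections $t_\pm\colon X\to X_\pm$ I would build spatial isometries $S_\pm\in B(\ell^p(X))$ by $(S_\pm\xi)(t_\pm(x))=\xi(x)$ and $0$ off $X_\pm$. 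Bounded displacement forces finite propagation, so $S_\pm\in B^p_u(X)$, and since $t_\pm$ are bijections onto $X_\pm$ each $S_\pm$ is an isometry of $\ell^p(X)$ onto $\ell^p(X_\pm)$. As $\ell^p(X)=\ell^p(X_+)\oplus_p\ell^p(X_-)$, the reverse operators $R_\pm=S_{t_\pm^{-1}}$ satisfy $R_+S_+=R_-S_-=1$ and $S_+R_++S_-R_-=1$; that is, $(S_+,S_-)$ is a spatial representation of the Cuntz relations. By Phillips' spatial-uniqueness theorem for $\mathcal{O}^p_d$ (see \cite{Phil12}), the generated subalgebra is isometrically isomorphic to $\mathcal{O}^p_2$ and the embedding is unital. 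Then $(3)\Rightarrow(1)$ is a transfer: in $\mathcal{O}^p_2$ the idempotent $e=s_1r_1$ has $e\sim 1$ via $s_1,r_1$ and $1-e=s_2r_2\sim 1$, and a unital isometric embedding carries this configuration into $B^p_u(X)$.

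The closing implication $(5)\Rightarrow(2)$ is where the dichotomy enters. Arguing contrapositively, if $X$ admits no paradoxical decomposition then by the metric Tarski alternative $X$ is amenable and carries a F\o lner sequence $\{F_n\}$; fixing a free ultrafilter $\omega$, the diagonal average $\tau(T)=\lim_\omega|F_n|^{-1}\sum_{x\in F_n}T_{xx}$ defines a normalized trace on $B^p_u(X)$, the identity $\tau(ST)=\tau(TS)$ coming from the vanishing of boundary contributions guaranteed by the F\o lner condition, which contradicts $(5)$. Finally, for $(5)\Leftrightarrow(6)$, one direction is immediate: any $\psi$ as in the negation of $(6)$ restricts to a normalized trace on $B^p_u(X)$ (put $T\in B^p_u(X)$ into the commutation identity), so $\neg(6)\Rightarrow\neg(5)$. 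Conversely, once $(5)$ fails we again have a F\o lner sequence, and the same diagonal-average formula, now evaluated on all of $B(\ell^p(X))$, yields the required hypertrace $\psi$: it is unital, has norm one because $|T_{xx}|\le\|T\delta_x\|_p\le\|T\|$, and satisfies $\psi(aT)=\psi(Ta)$ for $a\in B^p_u(X)$ by the F\o lner boundary estimate applied to finite-propagation approximants of $a$.

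I expect the main obstacle to be the analytic control of the diagonal-average functional: verifying that $\tau$ (respectively $\psi$) really satisfies the trace (respectively hypertrace) identity on the full norm closure $B^p_u(X)$ rather than merely on finite-propagation operators, which requires estimating the off-diagonal and boundary terms uniformly and passing to the limit. On the structural side, the delicate point is checking that the concretely constructed pair $(S_+,S_-)$ meets the exact spatial relations in Phillips' definition of $\mathcal{O}^p_2$, so that the uniqueness theorem applies and delivers an \emph{isometric} embedding rather than a merely bounded isomorphism. The purely combinatorial Tarski dichotomy, being independent of $p$, can be quoted from the $p=2$ treatment.
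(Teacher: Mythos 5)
Your proposal is correct and follows essentially the same route as the paper's proof: the core implications coincide, namely building spatial isometries from the paradoxical decomposition, checking the Leavitt relations for $L_2$, and invoking Phillips' spatial-representation and uniqueness theorems to get an isometric unital copy of $\mathcal{O}^p_2$; the $K_0$ and trace obstructions; and the Tarski-type alternative (no paradoxical decomposition $\Leftrightarrow$ amenability $\Leftrightarrow$ invariant mean) to produce a trace/hypertrace in the converse direction. The only minor deviations are organizational: the paper attaches proper infiniteness to the cycle via the commutator argument (the unit of an algebra that is properly infinite in standard form is a sum of two commutators, hence admits no trace) where you use the direct computation $[1]_0=[e]_0+[1-e]_0=2[1]_0$, and the paper constructs the hypertrace as $\phi_\mu\circ E_0$ from an invariant mean composed with the conditional expectation $E_0:B(\ell^p(X))\rightarrow\ell^\infty(X)$, verifying commutation on the dense span of partial-translation operators, whereas you use a F\o lner/ultrafilter diagonal average with boundary estimates --- two equivalent realizations of the same functional.
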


\parskip 6pt

In Section 3, we study the group homomorphism $K_0(\ell^\infty(X))\rightarrow K_0(B^p_u(X))$ induced by the canonical diagonal inclusion $\ell^\infty(X)\hookrightarrow B^p_u(X)$ for every metric space $X$ with low asymptotic dimension. 
\begin{defn}
A collection $\mathcal{U}$ of subsets of a metric space is $r$-separated if for all distinct $A,B\in\mathcal{U}$, we have $d(A,B)>r$. A metric space with bounded geometry has asymptotic dimension at most $d$ if for any $r>0$, there is a decomposition $X=U_0\sqcup\cdots\sqcup U_d$, where each $U_i$ in turn decomposes into a uniformly bounded collection of $r$-separated subsets.
\end{defn}

The main result of Section 3 is the following theorem:
\begin{thmx} [see Theorem \ref{thm:lowdim}] 
Let $(X,d)$ be a metric space with bounded geometry. If the asymptotic dimension of $X$ is at most one, then the group homomorphism $K_0(\ell^\infty(X))\rightarrow K_0(B^p_u(X))$ induced by the canonical diagonal inclusion is always surjective for all $p\in[1,\infty)$.
\end{thmx}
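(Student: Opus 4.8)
The plan is to prove that every class in $K_0(B^p_u(X))$ is carried, after matrix amplification, by a diagonal idempotent, i.e.\ a characteristic function in $\ell^\infty(X)$. Two standard reductions come first. Since $M_n(B^p_u(X))\cong B^p_u(X\times F)$ for any $n$-point metric space $F$, and $X\times F$ has bounded geometry with the same asymptotic dimension as $X$, it is enough to treat idempotents in $B^p_u(X)$ itself: applying the conclusion to $X\times F$ handles matrices, because a diagonal idempotent $\chi_A$ with $A\subseteq X\times F$ is exactly a diagonal matrix $\mathrm{diag}(\chi_{A_1},\dots,\chi_{A_n})$ over $\ell^\infty(X)$ and hence lies in the image of the diagonal map. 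Secondly, using holomorphic functional calculus together with the density of finite-propagation operators, I may represent a given class by a finite-propagation element $a$ with $\|a^2-a\|$ as small as I like, say with $\mathrm{prop}(a)\le S$; such a controlled $\varepsilon$-idempotent determines the same $K_0$-class and is the natural object to manipulate.

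Now I bring in $\mathrm{asdim}(X)\le 1$: for a scale $r$ chosen large relative to $S$, decompose $X=U_0\sqcup U_1$ with each $U_i=\bigsqcup_j U_i^j$ a uniformly bounded, $r$-separated family, so that $|U_i^j|\le N$ for a uniform $N$ by bounded geometry. Because $\mathrm{prop}(a)\le S<r$, the compressions $\chi_{U_i}a\chi_{U_i}$ are block diagonal with respect to $\{U_i^j\}_j$, with uniformly finite blocks; thus, \emph{at scale $r$}, each colour behaves like a uniform product of matrix algebras $\prod_j M_{|U_i^j|}$, whose controlled $K_0$ is generated by diagonal rank idempotents (the same mechanism that makes $B^p_u$ of an asymptotic-dimension-zero space a spatial $L^p$ AF algebra). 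The essential difficulty is that the cross terms $\chi_{U_0}a\chi_{U_1}$ and $\chi_{U_1}a\chi_{U_0}$ need not vanish and cannot simply be deleted: already for $X=\mathbb{Z}$ a band idempotent is not similar to any block-diagonal operator, so the off-diagonal coupling carries genuine $K$-theoretic information that must be accounted for rather than removed by a single similarity.

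To assemble the two colours I would run a controlled (quantitative) Mayer--Vietoris argument at scale $r$, the $\ell^p$ analogue of the $p=2$ computation in \cite{lowdimUR}. Writing $A_i$ for the subalgebra of operators supported within $r$ of $U_i$, one has $A_0+A_1=B^p_u(X)$ at the level of propagation $\le S$, while $A_0\cap A_1$ is controlled by the interface where the $r$-neighborhoods of $U_0$ and $U_1$ meet; the resulting six-term sequence expresses $[a]_0$ through the controlled $K_0$ of $A_0$, $A_1$ and of the overlap, each generated by diagonal idempotents by the previous step, so a diagram chase places $[a]_0$ in the image of $K_0(\ell^\infty(X))$ and, letting $r\to\infty$, proves the theorem. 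The hypothesis $\mathrm{asdim}(X)\le 1$ enters precisely by yielding a \emph{two}-fold decomposition, so that a single Mayer--Vietoris step suffices; for higher dimension one would need to iterate, and indeed surjectivity can fail there. The hard part, and where I expect essentially all of the work to lie, is to make this gluing rigorous in the non-self-adjoint $L^p$ setting: with no projections or unitaries available, the cut-offs and the invertibles implementing the boundary map must be built from idempotents and bounded invertible operators, and one must bound their norms and the norms of their inverses \emph{uniformly in the block sizes}, using only bounded geometry and $\mathrm{prop}(a)\le S$. By contrast, the reductions above and the scale-$r$ base case are routine.
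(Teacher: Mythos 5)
Your overall architecture matches the paper's: a two-colour decomposition $X=U\sqcup V$ into uniformly bounded $r$-separated families coming from $\mathrm{asdim}(X)\le 1$, a controlled Mayer--Vietoris argument in which the overlap is (at the relevant scale) a direct product of uniformly bounded matrix algebras, and a final step where the colour algebras have $K_0$ reached from the diagonal. But there is a concrete gap at the pivotal step, namely your appeal to ``the resulting six-term sequence'' for controlled $K_0$. The quantitative $K$-theory available for $L^p$ operator algebras (\cite{Chung1}, quoted in the paper as Theorem \ref{MVthm}) does \emph{not} provide a six-term controlled Mayer--Vietoris sequence: it only constructs a controlled boundary map out of $K_1$ into $K_0$ of the intersection, together with the partial exactness statement that if this boundary vanishes then the $K_1$ class splits as a sum of classes from the two ideals. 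The missing ingredient for your chase is the boundary $K_0\rightarrow K_1$, i.e.\ controlled Bott periodicity, which (as the paper stresses in its introduction) has not been proved in the $L^p$ setting. So manipulating a finite-propagation $\varepsilon$-idempotent $a$ directly through a controlled $K_0$ exact sequence cannot be executed with the existing tools; the genuine difficulty is not, as you anticipate, uniform norm control on idempotents and invertibles in the blocks, but the absence of this piece of machinery. (A secondary inaccuracy: what must vanish is the boundary obstruction in $K_1$ of the overlap, not anything about its $K_0$ being generated by diagonal idempotents.)

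The paper's workaround is a suspension trick, and inserting it essentially repairs your outline. One identifies $K_0(B^p_u(X))\cong K_1(SB^p_u(X))$, represents the class by an invertible in matrices over $\widetilde{SB^p_u(X)}$, and replaces it by a controlled class via Lemma \ref{qKtoKsurj}. Applying the controlled $K_1\rightarrow K_0$ boundary of Theorem \ref{MVthm} to the suspended pair $(SI,SJ)$, the obstruction lands in the controlled $K_0$ of $S\mathfrak{N}(U\cap V)_{r_1}$; since $\mathfrak{N}(U\cap V)_{r_1}$ is a product of uniformly bounded matrix algebras, $K_0(S\mathfrak{N}(U\cap V)_{r_1})=K_1(\mathfrak{N}(U\cap V)_{r_1})=0$, and Lemma \ref{qKtoKinj} upgrades this to vanishing of the \emph{controlled} class at a larger scale. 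The exactness half of Theorem \ref{MVthm} then splits the class into pieces supported near $U$ and near $V$; because the scale is small compared with the separation $r$, these factor through $K_1\bigl(S\bigl(\prod_{i}B(\ell^p(U_i^{(r_2)}))\bigr)\bigr)$ and its analogue for $V$, whose $K_0$ groups are hit by the diagonal inclusion, giving surjectivity. Your preliminary reductions (matrix amplification via $B^p_u(X\times F)$ and passage to finite-propagation representatives) are correct but are subsumed by this machinery rather than needed as separate steps.
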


As a consequence, we deduce the following result:
\begin{corx} [see Corollary \ref{non-amen ass one}] 
If $X$ is a non-amenable metric space with bounded geometry and it has asymptotic dimension at most one, then $K_0(B^p_u(X))=0$ for all $p\in[1,\infty)$.
\end{corx}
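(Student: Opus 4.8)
The plan is to combine the surjectivity furnished by Theorem \ref{thm:lowdim} with the paradoxicality forced by non-amenability. Since $\mathrm{asdim}(X)\le 1$, Theorem \ref{thm:lowdim} gives a surjection $K_0(\ell^\infty(X))\to K_0(B^p_u(X))$ induced by the diagonal inclusion. Because $X$ is discrete, $K_0(\ell^\infty(X))$ is the group of bounded $\mathbb{Z}$-valued functions on $X$, which is generated by the classes $[\chi_A]_0$ of indicator functions of subsets $A\subseteq X$. Hence $K_0(B^p_u(X))$ is generated by the images of these classes, and it suffices to prove that $[\chi_A]_0=0$ in $K_0(B^p_u(X))$ for every $A\subseteq X$.

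First I would record the dictionary between bounded-displacement bijections and $K_0$. If $t\colon B\to C$ is a bijection between subsets of $X$ with $\sup_{b\in B}d(b,t(b))<\infty$, then the operators $V,W\in B(\ell^p(X))$ given by $V\delta_b=\delta_{t(b)}$ and $W\delta_c=\delta_{t^{-1}(c)}$ (and zero off $B$ and $C$ respectively) have finite propagation, hence lie in $B^p_u(X)$, and satisfy $WV=\chi_B$ and $VW=\chi_C$. Thus $\chi_B$ and $\chi_C$ are algebraically equivalent, so $[\chi_B]_0=[\chi_C]_0$; passing to $2\times 2$ matrices over $B^p_u(X)$, the same applies to bijections defined on disjoint unions such as $X\sqcup A$.

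Next I would exploit non-amenability. Since $X$ is non-amenable, it admits a paradoxical decomposition in the sense of Theorem \ref{nonamenThm}(2) (the metric analogue of Tarski's theorem), which is precisely a bounded-displacement equidecomposition $X\sim X\sqcup X$. For any $A\subseteq X$ one then has $X\preceq X\sqcup A\preceq X\sqcup X\sim X$, where $\preceq$ denotes bounded-displacement subequidecomposability; by the Banach--Schr\"oder--Bernstein theorem for such partial bijections, $X\sqcup A\sim X$. Feeding this equidecomposition through the dictionary of the previous paragraph yields $[\chi_X]_0+[\chi_A]_0=[\chi_X]_0$ in $K_0(B^p_u(X))$, and cancellation in the group gives $[\chi_A]_0=0$. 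As $A$ was arbitrary, every generator vanishes and $K_0(B^p_u(X))=0$ for all $p\in[1,\infty)$.

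The routine parts are the finite-propagation bookkeeping and the identification of $K_0(\ell^\infty(X))$ with bounded integer-valued functions. The step carrying the real weight is the passage $X\sqcup A\sim X$: one must ensure that the Banach--Schr\"oder--Bernstein argument runs with the partial bijections constrained to have uniformly bounded displacement, so that the resulting equidecomposition is implemented by honest finite-propagation operators in $B^p_u(X)$ and not merely set-theoretically. Equivalently, the whole computation can be phrased homologically: the dictionary exhibits $K_0(B^p_u(X))$ as a quotient of the $0$-th uniformly finite homology $H_0^{\mathrm{uf}}(X;\mathbb{Z})$, which vanishes exactly when $X$ is non-amenable by the theorem of Block and Weinberger; this vanishing is the one essential external input.
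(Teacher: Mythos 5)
Your proof is correct, and its first half coincides with the paper's: both invoke Theorem \ref{thm:lowdim} for surjectivity of $K_0(\ell^\infty(X))\to K_0(B^p_u(X))$, and both reduce, via the identification of $K_0(\ell^\infty(X))$ with $C(\beta X,\mathbb{Z})$ and its generation by indicator functions, to showing $[1_A]_0=0$ in $K_0(B^p_u(X))$ for every $A\subseteq X$. Where you genuinely diverge is in how this vanishing is obtained. The paper (Lemmas \ref{Lem:diagidem1} and \ref{Lem:diagidem2}) argues by fullness: the paradoxical decomposition $X=X_1\sqcup X_2$ makes $1_{X_1}$ and $1_{X_2}$ full idempotents, hence each $1_X-1_{A\cap X_i}$ is full; by the ideal lemma a full diagonal idempotent $1_B$ forces $B$ to be coarsely dense, hence non-amenable, so Theorem \ref{nonamenThm} gives $[1_B]_0=0$; combining with $[1_X]_0=0$ yields $[1_A]_0=[1_{A\cap X_1}]_0+[1_{A\cap X_2}]_0=0$. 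You instead prove the stronger statement that $\mathrm{diag}(1_X,1_A)$ and $\mathrm{diag}(1_X,0)$ are algebraically equivalent in $M_2(B^p_u(X))$, by running the Banach--Schr\"oder--Bernstein argument inside the pseudogroup of partial translations to upgrade $X\preceq X\sqcup A\preceq X\sqcup X\sim X$ to $X\sqcup A\sim X$. This is legitimate: the classical Schr\"oder--Bernstein proof only restricts and composes the pieces, and both operations preserve the bounded-displacement constraint, so the controlled version holds and is available in the pseudogroup framework of \cite{AmenCoa}. What each route buys: the paper's stays entirely within results it has already established (the ideal lemma, Theorem \ref{nonamenThm}, coarse invariance of amenability) and needs nothing about equidecompositions beyond the bare existence of a paradoxical decomposition; yours is shorter and more concrete once the controlled Schr\"oder--Bernstein theorem is granted, avoids Lemma \ref{Lem:diagidem1} and the ideal lemma entirely, and delivers an honest equivalence of idempotents rather than only an equality of $K_0$ classes. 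One small correction to your closing remark: the vanishing of the \emph{integral} uniformly finite homology $H_0^{\mathrm{uf}}(X;\mathbb{Z})$ for non-amenable $X$ -- which is what your argument would actually need -- is due to Whyte; Block and Weinberger prove the statement with real coefficients.
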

In order to prove Theorem C, we have to use a quantitative version of $K$-theory for $L^p$ operator algebras from \cite{Chung1}. The idea of the proof is almost identical to the one in \cite[Section 5]{lowdimUR} except that the quantitative version of Bott periodicity was not proved in this setting and we consider suspensions of $\ell^p$ uniform Roe algebras instead.

\parskip 6pt
In Section 4, we study structure and $K$-theory of $\ell^p$ uniform Roe algebras of metric spaces with zero asymptotic dimension. We begin with the following characterization of spaces with zero asymptotic dimension in terms of their $\ell^p$ uniform Roe algebras:
\begin{thmx} [see Theorem \ref{thmAsdim0}] 
Let $X$ be a metric space with bounded geometry, and let $p\in[1,\infty)$. The following are equivalent:
\begin{enumerate}
\item $X$ has asymptotic dimension zero.
\item $B^p_u(X)$ is an inductive limit of 
$\bigoplus_{k=1}^N M_{d_k}^p$, where $N,d_1,\ldots,d_k\in\mathbb{N}$ and $M_d^p$ denotes $B(\ell^p(\{1,\ldots,d\}))$.

\item $B^p_u(X)$ has cancellation (see Definition \ref{cancellation}).
\end{enumerate}
\end{thmx}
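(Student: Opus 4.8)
The plan is to prove the cycle $(1)\Rightarrow(2)\Rightarrow(3)\Rightarrow(1)$. The geometric input throughout is the reformulation of zero asymptotic dimension for a bounded geometry space: $X$ has asymptotic dimension zero if and only if for every $R>0$ the $R$-connected components of $X$ are uniformly bounded (equivalently, by bounded geometry, of uniformly bounded cardinality). Indeed, in any uniformly bounded $R$-separated decomposition each $R$-component lies in a single piece, and conversely the $R$-components themselves always form a uniformly bounded $R$-separated decomposition, since distinct $R$-components are more than $R$ apart.

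For $(1)\Rightarrow(2)$, for each $n\in\mathbb{N}$ let $\mathcal{P}_n$ be the partition of $X$ into $n$-connected components, with uniform size bound $N_n$. An operator of propagation at most $n$ must vanish across the (more than $n$-separated) pieces of $\mathcal{P}_n$, so the block-diagonal subalgebra $B_n:=\bigoplus^{\ell^\infty}_{C\in\mathcal{P}_n} B(\ell^p(C))$ contains every operator of propagation $\leq n$; since $\mathcal{P}_n$ refines $\mathcal{P}_m$ for $n\leq m$ we get $B_n\subseteq B_m$ and $\overline{\bigcup_n B_n}=B^p_u(X)$. Grouping components by cardinality, $B_n\cong\bigoplus_{k=1}^{N_n}\ell^\infty(I_{n,k},M^p_k)$. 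The key point is that each factor is a non-sequential spatial $L^p$ AF algebra: because $M^p_k$ is finite-dimensional, any bounded section $I_{n,k}\to M^p_k$ has totally bounded range and is therefore uniformly approximated by finitely-valued sections, i.e. by elements of $\bigoplus_Q M^p_k$ indexed by a finite partition $Q$ of $I_{n,k}$. Ranging over finite partitions (a directed, generally uncountable, index set) exhibits $B_n$, and hence $B^p_u(X)$, as the closed union of an upward-directed family of finite-dimensional subalgebras of the form $\bigoplus_{k=1}^N M^p_{d_k}$ with unital spatial connecting maps, which is precisely a (non-sequential) spatial $L^p$ AF algebra in the sense of Phillips--Viola \cite{PhilViola}.

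For $(2)\Rightarrow(3)$, the finite-dimensional algebras $\bigoplus_{k=1}^N M^p_{d_k}$ have cancellation (their semigroup of idempotents is a sum of copies of $\mathbb{Z}_{\geq0}$, which is cancellative), and cancellation passes to such direct limits by the usual AF argument: every idempotent in the limit is equivalent to one inside a building block and equivalence in the limit is already detected there, so injectivity of the maps $V(A_\lambda)\to K_0(A_\lambda)$ survives in the exact direct limit. I would invoke the corresponding results of Phillips--Viola here rather than reprove them. For $(3)\Rightarrow(1)$ I argue by contraposition. Assume $X$ does not have asymptotic dimension zero, so there is $R>0$ for which the $R$-components are not uniformly bounded. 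If some $R$-component is infinite, then by Theorem \ref{thmFinite} the algebra $B^p_u(X)$ is not finite; since cancellation forces finiteness (from $1\sim e$ with an idempotent $e\neq1$ one gets $e\oplus(1-e)\sim e\oplus0$, whence $1-e\sim0$, a contradiction), cancellation fails.

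It remains to treat the case where all $R$-components are finite but of unbounded diameter. Passing to a subsequence of components $C_k$ whose diameters \emph{and} pairwise separations tend to infinity (possible by bounded geometry, since large-diameter components eventually leave any fixed ball), choose in each $C_k$ points $a_k,b_k$ realizing the diameter together with an $R$-path between them; shifting along this path and fixing the remaining points gives a displacement-$\leq R$ bijection of $C_k\setminus\{b_k\}$ onto $C_k\setminus\{a_k\}$. Setting $A=\bigsqcup_k\{a_k\}$, $B=\bigsqcup_k\{b_k\}$ and $C=\bigsqcup_k(C_k\setminus\{a_k,b_k\})$, these assemble into a single finite-propagation partial translation implementing $1_A\oplus1_C\sim1_B\oplus1_C$, whereas $1_A\not\sim1_B$. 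The main obstacle is exactly this non-equivalence: it rests on the structural fact that any algebraic equivalence of diagonal idempotents in $B^p_u(X)$ is implemented by a bounded-displacement bijection of the underlying supports — the same ingredient underlying the paradoxical-decomposition characterization (Theorem \ref{nonamenThm}) — combined with the separation of the $C_k$, which forces such a bijection to send $a_k$ to $b_k$ for all large $k$, contradicting $d(a_k,b_k)=\mathrm{diam}(C_k)\to\infty$. I would isolate that lemma and the subsequence extraction as the two technical cores, and note that it is crucial to have large-diameter rather than merely large-cardinality components, which is precisely what bounded geometry supplies.
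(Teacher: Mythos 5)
Your implications (i)$\Rightarrow$(ii) and (ii)$\Rightarrow$(iii) are essentially the paper's own proof: the same partition into $r$-connected components (uniformly finite by bounded geometry), the same block-diagonal subalgebras, the same total-boundedness/finite-net approximation producing a directed family of finite-dimensional subalgebras, and the same reduction of cancellation to the building blocks. Your contrapositive setup for (iii)$\Rightarrow$(i) also matches the paper's in all but the last step: the paper extracts (via \cite[Lemma 2.4]{lowdimUR}) chains of growing length and mutual separation, implements the same shift-along-a-path equivalence, and then cancellation would force $1_A\sim 1_B$ with $A=\{a_k\}$, $B=\{b_k\}$ and $d(a_k,B)\to\infty$. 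The genuine gap is how you rule out $1_A\sim 1_B$. The ``structural fact'' you invoke --- that every algebraic equivalence of diagonal idempotents in $B^p_u(X)$ is implemented by a bounded-displacement bijection of the supports --- is proved nowhere in the paper, and it is \emph{not} the ingredient behind Theorem \ref{nonamenThm}: there, the passage from the algebra back to the geometry runs through normalized traces, the conditional expectation onto $\ell^\infty(X)$, and invariant means (Corollary \ref{Cor:PInotrace}), never through converting an idempotent equivalence into a bijection. Moreover, for $p\neq 2$ your fact is seriously problematic: the natural $p=2$ argument first replaces the implementing element by a partial isometry (polar decomposition) and then applies Hall's marriage theorem to the doubly stochastic matrix $(|v_{xy}|^2)$; for $p\neq 2$ there is no polar decomposition, the elements $v,w$ with $wv=1_A$, $vw=1_B$ need not have norm one or nonnegative entries, and the corresponding counting estimate only bounds $|N(F)|$ below by $|F|$ divided by a constant depending on $\|v\|\,\|w\|$ and the geometry, which falls short of Hall's condition. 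So as written, the crux of (iii)$\Rightarrow$(i) is unjustified.

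Fortunately, what you actually need is far weaker than that structural fact and has an elementary proof --- exactly the one in the paper. Suppose $wv=1_A$ and $vw=1_B$ in $B^p_u(X)$; replacing $v$ by $1_Bv1_A$ and $w$ by $1_Aw1_B$ (which preserves both identities) we may assume $v=1_Bv1_A$ and $w=1_Aw1_B$. Choose a finite-propagation $v_0=1_Bv_01_A$ of propagation at most $s$ with $\|v-v_0\|<1/\|w\|$. Since $d(a_k,B)\to\infty$, pick $k$ with $d(a_k,B)>s$; then the entire column of $v_0$ at $a_k$ vanishes, so $(wv_0)_{a_k a_k}=0$, while $(wv)_{a_k a_k}=(1_A)_{a_k a_k}=1$. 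Since every matrix entry of an operator on $\ell^p(X)$ is bounded by its operator norm, $1=|(w(v-v_0))_{a_k a_k}|\leq\|w\|\,\|v-v_0\|<1$, a contradiction. (The paper phrases this with elements $y,z$, a finite-propagation approximant $a$, and the rank-one idempotent $e$ supported at a far-away point, but it is the same estimate.) With this substitution your argument closes; without it, the essential point of (iii)$\Rightarrow$(i) is left unproved.
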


In the remainder of Section 4, we restrict our attention to the case where the metric space is a countable, locally finite group equipped with a proper left-invariant metric. Such a metric group is actually a bounded geometry metric space with asymptotic dimension zero. We compute the ordered $K_0$ group $(K_0(B^p_u(\Gamma)),K_0(B^p_u(\Gamma))^+,[1]_0)$ for any countable, locally finite group $\Gamma$, showing that it is independent of $p$ (see Theorem \ref{k-theory of locally finite}). As a consequence, the (ordered) $K_0$ group of the associated $\ell^p$ uniform Roe algebra is a complete invariant for the (bijective) coarse equivalence class of the underlying countable locally finite group.

\begin{defn}
Let $X$ and $Y$ be metric spaces. 
\begin{itemize}
\item A (not necessarily continuous) map $f:X\rightarrow Y$ is said to be \emph{uniformly expansive} if for all $R>0$ there exists $S>0$ such that if $x_1,x_2\in X$ satisfy $d(x_1,x_2)\leq R$, then $d(f(x_1),f(x_2))\leq S$.

\item Two maps $f,g:X\rightarrow Y$ are said to be \emph{close} if there exists $C>0$ such that $d(f(x),g(x))\leq C$ for all $x\in X$.

\item Two metric spaces $X$ and $Y$ are said to be \emph{coarsely equivalent} if there exist uniformly expansive maps $f:X\rightarrow Y$ and $g:Y\rightarrow X$ such that $f\circ g$ and $g\circ f$ are close to the identity maps, respectively. In this case, we say both $f$ and $g$ are \emph{coarse equivalences} between $X$ and $Y$.

\item We say a map $f:X\to Y$ is a \emph{bijective coarse equivalence} if $f$ is both a coarse equivalence and a bijection. In this case, we say $X$ and $Y$ are \emph{bijectively coarsely equivalent}.

\end{itemize}
\end{defn}

\begin{corx}[see Corollary \ref{ce+lf}]
Let $\Gamma$ and $\Lambda$ be countable, locally finite groups with proper left-invariant metrics $d_\Gamma$ and $d_\Lambda$ respectively. Then the following conditions are equivalent:
\begin{enumerate}
\item $(\Gamma,d_\Gamma)$ and $(\Lambda,d_\Lambda)$ are coarsely equivalent. 
\item $K_0(B^p_u(\Gamma))\cong K_0(B^p_u(\Lambda))$ for all $p\in[1,\infty)$.
\item $K_0(B^p_u(\Gamma))\cong K_0(B^p_u(\Lambda))$ for some $p\in[1,\infty)$.
\end{enumerate}
\end{corx}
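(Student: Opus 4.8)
The plan is to route all three conditions through the explicit description of $K_0$ furnished by Theorem \ref{k-theory of locally finite}, using the supernatural number of a locally finite group as the common invariant linking its coarse type to the $K$-theory of its $\ell^p$ uniform Roe algebra. Writing $\Gamma=\bigcup_n\Gamma_n$ and $\Lambda=\bigcup_m\Lambda_m$ as increasing unions of finite subgroups, I set $\mathfrak{n}(\Gamma)=\prod_q q^{a_q}$ with $a_q=\sup_n v_q(|\Gamma_n|)$, where $v_q$ is the $q$-adic valuation, and similarly for $\Lambda$. By Theorem \ref{k-theory of locally finite}, independently of $p$, the group $K_0(B^p_u(\Gamma))$ is the rank one torsion-free group $\varinjlim\big(\mathbb{Z}\xrightarrow{\,\times[\Gamma_{n+1}:\Gamma_n]\,}\mathbb{Z}\big)\cong\bigcup_n\tfrac{1}{|\Gamma_n|}\mathbb{Z}\subseteq\mathbb{Q}$, whose Baer type is precisely the type $(a_q)_q$ of $\mathfrak{n}(\Gamma)$.

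Granting this, (2)$\Rightarrow$(3) is trivial and (3)$\Rightarrow$(2) is immediate from $p$-independence: an isomorphism for a single $p_0$ upgrades, via $K_0(B^p_u(\Gamma))\cong K_0(B^{p_0}_u(\Gamma))$ on each side, to an isomorphism for every $p$. The statement thus collapses to (1)$\Leftrightarrow$(3), and by the first paragraph condition (3) is equivalent to the assertion that $\mathfrak{n}(\Gamma)$ and $\mathfrak{n}(\Lambda)$ have the same type, i.e. differ in the exponents of only finitely many primes, each such exponent being finite on both sides.

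What remains is to identify this type condition with coarse equivalence. For (1)$\Rightarrow$(3) I would use the soft direction of the coarse classification of countable locally finite groups: a coarse equivalence sends the cofinal chain $\{\Gamma_n\}$ to one cofinal in $\{\Lambda_m\}$ up to bounded displacement, which forces $\mathfrak{n}(\Gamma)$ and $\mathfrak{n}(\Lambda)$ to share a type and hence, by the first paragraph, yields the group isomorphism in (3); alternatively one invokes coarse invariance of $K_0$ for one value of $p$ and then applies $p$-independence. For the converse (3)$\Rightarrow$(1) I would manufacture a coarse equivalence directly from equality of types: passing to cofinal subsequences $\{\Gamma_{n_k}\}$, $\{\Lambda_{m_k}\}$ for which $|\Gamma_{n_k}|$ and $|\Lambda_{m_k}|$ divide one another up to a uniformly bounded factor, and matching cosets bijectively at these scales compatibly with the inclusions, assembles two mutually coarsely-inverse uniformly expansive maps.

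The main obstacle is exactly this last step (3)$\Rightarrow$(1): converting an abstract isomorphism of rank one groups—which retains only the Baer type and forgets any order unit—into genuine coarse equivalences. The subtlety is that the missing order data is precisely what distinguishes bijective from plain coarse equivalence, so the construction must tolerate a bounded multiplicative gap between the scale sizes $|\Gamma_{n_k}|$ and $|\Lambda_{m_k}|$ and absorb it into the bounded error of the maps, rather than insisting on an exact matching of densities. Once this interleaving is carried out—or, equivalently, once the coarse classification of locally finite groups is cited in the form ``coarsely equivalent if and only if the supernatural numbers have the same type''—the three implications above combine to give the corollary.
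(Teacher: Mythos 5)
Your proposal has a genuine gap, and it occurs right at the foundation: the identification of $K_0(B^p_u(\Gamma))$ with the rank one group $\bigcup_n\tfrac{1}{|\Gamma_n|}\mathbb{Z}\subseteq\mathbb{Q}$ is false. That computation would be correct for the UHF-type inductive limit $\varinjlim M_{k_n}(\mathbb{C})$, but $B^p_u(\Gamma)$ for an infinite locally finite group is $\varinjlim\bigl(\prod_{i=1}^\infty M_{k_n}(\mathbb{C})\bigr)$ --- an inductive limit of infinite \emph{products} of matrix algebras (Proposition \ref{grpRoe}). Theorem \ref{k-theory of locally finite} therefore gives $K_0(B^p_u(\Gamma))\cong\ell^\infty(\mathbb{N},\mathbb{Z})/H_\Gamma$, an uncountable group that is very far from having rank one, so there is no ``Baer type'' to extract and the entire translation of condition (3) into a statement about supernatural numbers collapses.

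The second, independent error is your picture of the coarse classification: it is not true that coarse equivalence of countable locally finite groups is detected by the type of the supernatural number. In fact \emph{all} countably infinite locally finite groups are coarsely equivalent to one another (this is \cite[Corollary 8]{BZ}, cited in the paper's proof); the supernatural number is the invariant for \emph{bijective} coarse equivalence (Theorem \ref{super}), which is the subject of Theorem \ref{thm:locfingrp}, not of this corollary. Once you know this, the hard step you were worried about --- manufacturing a coarse equivalence from a $K_0$ isomorphism --- evaporates: for (3) $\Rightarrow$ (1) one only has to rule out the case where exactly one of $\Gamma,\Lambda$ is finite, and this is done by observing that $K_0(B^p_u(\Lambda))\cong\mathbb{Z}$ for finite $\Lambda$ while $\ell^\infty(\mathbb{N},\mathbb{Z})/H_\Gamma$ is not singly generated. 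Your (1) $\Rightarrow$ (2) step via coarse invariance of $K$-theory (\cite[Theorem 3.4]{CL}) and the $p$-independence argument for (2) $\Leftrightarrow$ (3) are fine in spirit, but the interleaving construction you sketch for the converse both addresses the wrong problem and relies on the incorrect rank one description of $K_0$.
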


\begin{thmx}[see Theorem \ref{thm:locfingrp}]
Let $\Gamma$ and $\Lambda$ be countable, locally finite groups with proper left-invariant metrics $d_\Gamma$ and $d_\Lambda$ respectively. Then the following conditions are equivalent:
\begin{enumerate}
\item $(\Gamma,d_\Gamma)$ and $(\Lambda,d_\Lambda)$ are bijectively coarsely equivalent.
\item For every $p\in[1,\infty)$, there is an isometric isomorphism $\phi:B^p_u(\Gamma)\rightarrow B^p_u(\Lambda)$ such that $\phi(\ell^\infty(\Gamma))=\ell^\infty(\Lambda)$.

\item $B^p_u(\Gamma)$ and $B^p_u(\Lambda)$ are isometrically isomorphic for some $p\in[1,\infty)$.

\item $(K_0(B^p_u(\Gamma)),K_0(B^p_u(\Gamma))^+,[1]_0)\cong(K_0(B^p_u(\Lambda)),K_0(B^p_u(\Lambda))^+,[1]_0)$ for every $p\in[1,\infty)$.

\item $(K_0(B^p_u(\Gamma)),[1]_0)\cong(K_0(B^p_u(\Lambda)),[1]_0)$ for some $p\in[1,\infty)$.
\end{enumerate}
\end{thmx}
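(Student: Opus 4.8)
The plan is to establish the cycle $(1)\Rightarrow(2)\Rightarrow(3)\Rightarrow(5)\Rightarrow(4)\Rightarrow(1)$, arranging matters so that all of the genuinely new input is concentrated in the two $K$-theoretic steps $(5)\Rightarrow(4)$ and $(4)\Rightarrow(1)$, both of which draw on the explicit computation of the ordered $K_0$ group recorded in Theorem \ref{k-theory of locally finite}.

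For $(1)\Rightarrow(2)$ I would realize a bijective coarse equivalence spatially. Given a bijection $f\colon\Gamma\to\Lambda$ that is a coarse equivalence, the coordinate permutation determined by $U_f\delta_\gamma=\delta_{f(\gamma)}$ is an isometric isomorphism $\ell^p(\Gamma)\to\ell^p(\Lambda)$ for every $p\in[1,\infty)$, and I set $\phi=\mathrm{Ad}(U_f)$. Since the matrix entries transform by $\phi(T)_{f(\gamma),f(\gamma')}=T_{\gamma,\gamma'}$, uniform expansiveness of $f$ sends operators of finite propagation on $\ell^p(\Gamma)$ to operators of finite propagation on $\ell^p(\Lambda)$, and uniform expansiveness of $f^{-1}$ (which holds because $f$ is a bijective coarse equivalence) reverses this; hence $\phi$ restricts to an isometric isomorphism $B^p_u(\Gamma)\to B^p_u(\Lambda)$. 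A direct computation shows that $\phi$ carries the diagonal operator with symbol $h\in\ell^\infty(\Gamma)$ to the diagonal operator with symbol $h\circ f^{-1}\in\ell^\infty(\Lambda)$, so $\phi(\ell^\infty(\Gamma))=\ell^\infty(\Lambda)$, which is $(2)$ for every $p$.

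The steps $(2)\Rightarrow(3)$ and $(3)\Rightarrow(5)$ are routine: the former forgets the condition on the diagonal and specializes to one $p$, while the latter uses that an isometric isomorphism of unital Banach algebras is a unital algebra isomorphism and therefore induces an isomorphism of $K_0$ groups sending $[1]_0$ to $[1]_0$. The crucial step is $(5)\Rightarrow(4)$, where I would invoke Theorem \ref{k-theory of locally finite} to the effect that $\bigl(K_0(B^p_u(\Gamma)),K_0(B^p_u(\Gamma))^+,[1]_0\bigr)$ is, independently of $p$, a rank-one ordered group: a subgroup of $\mathbb{Q}$ containing $[1]_0$ with the order inherited from $\mathbb{Q}$. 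For such groups every group isomorphism matching the order units is multiplication by a positive rational and is thus automatically order preserving, so a pointed isomorphism at a single $p$ upgrades, by $p$-independence, to an isomorphism of ordered pointed groups for every $p$, which is $(4)$.

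Finally, $(4)\Rightarrow(1)$ is exactly the complete-invariant half of Theorem \ref{k-theory of locally finite}: an isomorphism of ordered pointed $K_0$ groups (for one $p$, hence for all by $p$-independence) forces the filtrations of $\Gamma$ and $\Lambda$ by finite subgroups to match up to the equivalence encoding a bijective coarse equivalence, from which the desired bijection $\Gamma\to\Lambda$ is reconstructed. The main obstacle is therefore housed in Theorem \ref{k-theory of locally finite} itself; within the present packaging the only delicate point is the rigidity used in $(5)\Rightarrow(4)$, namely that the order and the order unit of these $K_0$ groups are recoverable from the bare pointed group precisely because the groups have rank one.
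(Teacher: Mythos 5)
Your reduction steps $(1)\Rightarrow(2)\Rightarrow(3)\Rightarrow(5)$ are fine and agree in substance with the paper (which cites earlier results of Chung--Li and \.{Z}uk-type rigidity for the equivalence of (1)--(3)). But the two steps carrying all the weight are broken. The claim underpinning $(5)\Rightarrow(4)$ --- that Theorem \ref{k-theory of locally finite} exhibits $K_0(B^p_u(\Gamma))$ as a rank-one ordered group, i.e.\ a subgroup of $\mathbb{Q}$ with the inherited order --- is false. That theorem computes $K_0(B^p_u(\Gamma))\cong\ell^\infty(\mathbb{N},\mathbb{Z})/H_\Gamma$, which for infinite $\Gamma$ is an enormous group of infinite rank (the paper itself uses, in the proof of Corollary \ref{ce+lf}, that it is not even singly generated). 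You appear to have conflated the uniform Roe algebra $B^p_u(\Gamma)$, an inductive limit of \emph{infinite products} $\prod_i M_{k_n}(\mathbb{C})$, with the UHF algebra attached to the supernatural number $s(\Gamma)$, whose $K_0$ really is a subgroup of $\mathbb{Q}$. Consequently the rigidity you invoke (``every unit-preserving group isomorphism is multiplication by a positive rational, hence order-preserving'') has no basis here, and your $(5)\Rightarrow(4)$ is unproved. The step $(4)\Rightarrow(1)$ is likewise a gap: Theorem \ref{k-theory of locally finite} is only a computation of the ordered $K_0$ group; it contains no statement that an isomorphism of these invariants ``forces the filtrations to match up,'' and no mechanism for reconstructing a bijection $\Gamma\to\Lambda$ is given in your sketch.

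The paper's actual route avoids the order structure entirely: it proves $(5)\Rightarrow(1)$ directly (in the full version of the theorem, $(9)\Rightarrow(1)$) by a divisibility argument. If $\Gamma$ and $\Lambda$ were not bijectively coarsely equivalent, then by the supernatural-number criterion (Theorem \ref{super}) some prime power $p^r$ divides $s(\Gamma)$ but not $s(\Lambda)$. Choosing an idempotent $q\in\prod_i M_{k_n}(\mathbb{C})$ of pointwise rank $k_n/p^r$ gives $p^r[q]_0=[1]_0$ in $K_0(B^p_u(\Gamma))$, so a unit-preserving isomorphism would make $[1]_0$ divisible by $p^r$ in $K_0(B^p_u(\Lambda))\cong\ell^\infty(\mathbb{N},\mathbb{Z})/H_\Lambda$; writing this out, $(p^rm_1-1,p^rm_2-1,\ldots)\in H_\Lambda^{(n)}$ forces $p^r\mid k_n'$, contradicting $p^r\nmid s(\Lambda)$. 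This uses only the pairing of the bare group with the class of the unit, which is exactly why the weakest condition (5) suffices; if you want to salvage your cycle, replace your $(5)\Rightarrow(4)\Rightarrow(1)$ with this direct $(5)\Rightarrow(1)$ argument, and obtain $(1)\Rightarrow(4)$ from $(1)\Rightarrow(3)$ plus functoriality.
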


We end the introduction by asking the following question:
\begin{que}
Let $X$ be a metric space with bounded geometry. Does the (ordered) $K_0$ group of $B^p_u(X)$ depend on $p\in [1,\infty)$ when $X$ has finite asymptotic dimension? 
\end{que}
We have partial answers from Corollary~\ref{non-amen ass one} for non-amenable spaces with asymptotic dimension at most one and from Theorem~\ref{k-theory of locally finite} for countable locally finite groups.

\section{Finiteness and proper infiniteness of $\ell^p$ uniform Roe algebras}

In this section, we investigate when the $\ell^p$ uniform Roe algebra of a metric space with bounded geometry is (stably) finite and when it is properly infinite in standard form.

The following definition of a (stably) finite (resp. properly infinite) algebra is based on \cite[Definition 1.1]{Laustsen} and \cite[Definition 1.1]{DLR}.

\begin{defn}
We say that two idempotents $e$ and $f$ in an algebra $A$ are algebraically equivalent, and write $e\sim f$, if there exist $x,y\in A$ such that $e=xy$ and $f=yx$.

We say that $e$ and $f$ are orthogonal (and write $e\perp f$) if $ef=0=fe$. We write $e\leq f$ if $ef=fe=e$.

An idempotent $e$ in $A$ is said to be properly infinite if there are orthogonal idempotents $e_1$ and $e_2$ in $eAe$ such that $e_1\sim e\sim e_2$.

A (nonzero) unital algebra $A$ is said to be properly infinite if the unit $1_A$ is a properly infinite idempotent.

A unital, properly infinite algebra $A$ is said to be properly infinite in standard form if there exists an idempotent $e$ in $A$ such that $e\sim 1_A\sim 1_A-e$.

An idempotent $e$ in $A$ is said to be finite if whenever $f$ is an idempotent in $A$ such that $e\sim f$ and $f\leq e$, then $f=e$.

A unital algebra $A$ is said to be finite if the unit $1_A$ is a finite idempotent. If $M_n(A)$ is finite for all $n\in\mathbb{N}$, then we say that $A$ is stably finite. 
\end{defn}

\begin{rem}
There are Banach algebras that are properly infinite but not properly infinite in standard form. For example, the Cuntz algebras $\mathcal{O}_n$ ($2\leq n\leq\infty$) introduced in \cite{Cuntz77} are $C^\ast$-algebras that are properly infinite, but only $\mathcal{O}_2$ is properly infinite in standard form (see \cite{Cuntz78} or \cite{Cuntz81}).
\end{rem}

%
%

\subsection{Finiteness}

We will show that finiteness of the $\ell^p$ uniform Roe algebra $B^p_u(X)$ of a bounded geometry metric space $X$ is equivalent to $B^p_u(X)$ being stably finite and is also equivalent to it being a quasidiagonal set of operators on $\ell^p(X)$. It can also be characterized in terms of coarse connected components of $X$.

We begin with a lemma providing useful criteria for determining finiteness of a unital Banach algebra.

\begin{lem} \label{finite}
Let $A$ be a unital Banach algebra. The following are equivalent:
\begin{enumerate}
\item $A$ is finite.
\item Every left-invertible element in $A$ is invertible.
\item Every right-invertible element in $A$ is invertible.
\item All idempotents in $A$ are finite.
\end{enumerate}
\end{lem}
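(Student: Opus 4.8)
The plan is to prove the cyclic web of equivalences by first identifying finiteness of $A$ (i.e.\ finiteness of the idempotent $1_A$) with the failure of one-sided invertibility, and then bootstrapping from the unit to arbitrary idempotents via compression to corner algebras. I would first dispose of the two "invertibility" conditions. For (ii)$\Rightarrow$(i): if $f$ is an idempotent with $f\sim 1_A$, write $f=xy$ and $1_A=yx$; then $x$ is left-invertible, so (ii) forces $x$ to be invertible, whence $y=x^{-1}$ and $f=xy=1_A$. Since $f\leq 1_A$ automatically for every idempotent $f$, this says $1_A$ is finite. Conversely, for (i)$\Rightarrow$(ii) I argue contrapositively: if $ba=1_A$ but $a$ is not invertible, then $e:=ab$ satisfies $e^2=a(ba)b=ab=e$, and $e\sim 1_A$ is witnessed by $x=a$, $y=b$ (so $xy=ab=e$, $yx=ba=1_A$); moreover $e\leq 1_A$ and $e\neq 1_A$ (else $a$ would be invertible), so $1_A$ is not finite. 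The equivalence (i)$\Leftrightarrow$(iii) is then obtained by the symmetric argument, interchanging the roles of left and right inverses.

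The implication (iv)$\Rightarrow$(i) is immediate, since $1_A$ is an idempotent, so the real content is (i)$\Rightarrow$(iv). Here I would reduce the finiteness of an arbitrary idempotent $e\in A$ to the finiteness (in the sense of (i)) of the corner algebra $eAe$, which is again a unital Banach algebra with unit $e$: indeed $eAe=\{a\in A: eae=a\}$ is closed, being the range of the bounded idempotent map $a\mapsto eae$ on $A$.

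Two facts drive this reduction. First, equivalence of subidempotents of $e$ already takes place inside the corner: if $f\leq e$ is an idempotent and $f\sim e$ in $A$, say $xy=f$ and $yx=e$, then from these relations one gets $xe=fx$ and $ey=yf$, so the compressions $exe,\,eye\in eAe$ satisfy $(exe)(eye)=f$ and $(eye)(exe)=e$; hence $e$ is a finite idempotent of $A$ if and only if $eAe$ is finite as a unital algebra. Second, the property "every left-invertible element is invertible" passes from $A$ to $eAe$: if $a\in eAe$ has left inverse $b\in eAe$ (so $ba=e$), then $z:=1_A-e+a$ has left inverse $1_A-e+b$ in $A$, hence is invertible in $A$ by (ii); since $z$ commutes with $e$, so does $z^{-1}$, and compressing shows that $a=eze$ is invertible in $eAe$ with inverse $ez^{-1}e$. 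Chaining these, condition (i) for $A$ gives (ii) for $A$ by the first step, which descends to (ii), and hence (i), for every corner $eAe$, which by the first fact means every idempotent $e$ is finite; that is (iv).

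The main obstacle is this second step, in particular the interplay between algebraic equivalence relative to $A$ and relative to the corner $eAe$: I must verify that the witnessing elements can be chosen inside $eAe$ and that invertibility is preserved under compression by a commuting idempotent. Once these two corner lemmas are established, the remaining implications are routine. I would also remark that the entire argument is purely algebraic, so completeness of $A$ enters only insofar as it guarantees that each corner $eAe$ is a closed (hence unital Banach) subalgebra.
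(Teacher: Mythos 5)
Your proof is correct, and the easy parts --- (i)$\Leftrightarrow$(ii)$\Leftrightarrow$(iii) and (iv)$\Rightarrow$(i) --- are handled with essentially the same algebra as in the paper (the paper's (i)$\Rightarrow$(ii) is the one-liner: if $ab=1_A$ then $1_A=ab\sim ba\leq 1_A$, so $ba=1_A$ by finiteness; your contrapositive with $e=ab$ is the same computation read backwards). The genuine difference is in the remaining implication. The paper proves (ii)$\Rightarrow$(iv) directly: given idempotents $e\sim f$ with $f\leq e$, it normalizes the witnesses so that $xy=e$, $yx=f$, $x=xf$, $y=fy$, then checks that $s=y+1_A-e$ and $t=x+1_A-e$ satisfy $ts=1_A$ and $st=f+1_A-e$; invertibility of $s$, granted by (ii), forces $st=1_A$, i.e.\ $f=e$. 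You instead bootstrap through the corner algebra $eAe$: your compression lemma (that $f\sim e$ in $A$ with $f\leq e$ implies $f\sim e$ inside $eAe$, via the witnesses $exe$ and $eye$) plays the role of the paper's normalization of witnesses, and your descent lemma for left-invertibility uses the element $z=1_A-e+a$, which is exactly the paper's padding trick $s=y+1_A-e$ in disguise. So the two arguments run on the same algebraic engine but are packaged differently: the paper's is shorter, a single self-contained computation, while yours is more modular --- it isolates two reusable facts (equivalence of subidempotents compresses to corners; one-sided invertibility descends to corners) and makes explicit the conceptual point that finiteness of an idempotent $e$ is the same as finiteness of the unital algebra $eAe$, at the cost of a longer route. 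Both arguments are purely algebraic, as you correctly observe: completeness of $A$ is never used, and the lemma holds for any unital algebra.
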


\begin{proof}
(i) $\Rightarrow$ (ii): Suppose that $ab=1_A$. Then $1_A=ab\sim ba\leq 1_A$ so $ba=1_A$.

(ii) $\Leftrightarrow$ (iii): It is straightforward.

(ii) $\Rightarrow$ (iv): Suppose that $e,f$ are idempotents in $A$ such that $e\sim f$ and $f\leq e$. There exist $x,y\in A$ such that $xy=e$ and $yx=f$. Moreover, we may assume that $x=xf$ and $y=fy$. Let $s=y+1_A-e$ and $t=x+1_A-e$.
Then $x(1_A-e)=0=(1_A-e)y$ so $ts=1_A$ and $st=f+1_A-e$. Since $s$ is left-invertible, and hence invertible, we get $e=f$.

(iv) $\Rightarrow$ (i): It is clear.
\end{proof}

The next definition is adapted from the case of operators on Hilbert space (cf. \cite[Definition 7.2.1]{BO}).

\begin{defn}
Let $E$ be a Banach space, and let $A\subseteq B(E)$ be an arbitrary collection of operators. We say that $A$ is a quasidiagonal set of operators on $E$ if for each finite set $M\subset A$, each finite set $F\subset E$, and each $\varepsilon>0$, there exists a finite rank idempotent operator $P$ on $E$ of norm one such that $||[S,P]||<\varepsilon$ and $||P\xi-\xi||<\varepsilon$ for all $S\in M$ and $\xi\in F$.

If $A=\{S\}$, then we say that the operator $S$ is quasidiagonal.
\end{defn}

\begin{prop} \label{QDprop}
A left-invertible quasidiagonal operator on a Banach space $E$ is invertible.
\end{prop}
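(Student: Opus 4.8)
The plan is to deduce invertibility from left-invertibility by proving that $S$ is surjective, exploiting that a left-invertible operator is automatically injective and bounded below. Writing $TS=I$ for some $T\in B(E)$, one gets $\|S\xi\|\ge c\|\xi\|$ with $c=\|T\|^{-1}$, so $S$ is bounded below and $\im S$ is closed. It therefore suffices to show that $\im S$ is dense, since a closed dense subspace is everything, and an injective, bounded below, surjective bounded operator between Banach spaces is invertible with bounded inverse.

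First I would fix a target vector $\eta\in E$ and invoke quasidiagonality of $S$ (with $M=\{S\}$ and $F=\{\eta\}$) to obtain, for small parameters $\varepsilon,\delta>0$, a finite rank idempotent $P$ of norm one satisfying $\|[S,P]\|<\varepsilon$ and $\|P\eta-\eta\|<\delta$. On the finite-dimensional subspace $V=PE$ I would study the compression $A:=PSP|_V$. For $\xi\in V$ we have $P\xi=\xi$, whence $PSP\xi=PS\xi=SP\xi+[P,S]\xi=S\xi+[P,S]\xi$, giving the lower bound $\|A\xi\|\ge(c-\varepsilon)\|\xi\|$. Thus for $\varepsilon<c$ the finite-dimensional linear map $A$ is injective, hence surjective onto $V$.

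Next I would solve $A\xi=P\eta$ with $\xi\in V$ and estimate the approximation error. The key computation rewrites the off-diagonal piece: since $\xi\in PE$, again $PS\xi=S\xi+[P,S]\xi$, so $(I-P)S\xi=-[P,S]\xi$, and therefore $S\xi-\eta=(P\eta-\eta)-[P,S]\xi$, yielding $\|S\xi-\eta\|\le\delta+\varepsilon\|\xi\|$. The lower bound on $A$ together with $\|P\|=1$ gives $\|\xi\|\le\|P\eta\|/(c-\varepsilon)\le\|\eta\|/(c-\varepsilon)$, so $\|S\xi-\eta\|\le\delta+\varepsilon\|\eta\|/(c-\varepsilon)$, which tends to $0$ as $\varepsilon,\delta\to0$. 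Hence $\eta\in\overline{\im S}=\im S$; as $\eta$ was arbitrary, $S$ is surjective and therefore invertible.

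The main obstacle is controlling the component $(I-P)S\xi$ of $S\xi$ lying outside $\im P$, since $P$ need not commute with $S$ and only approximately fixes $\eta$. The decisive observation is that on $PE$ this off-diagonal error is \emph{exactly} the commutator $[P,S]$ applied to $\xi$, so both the invertibility of the compression $A$ and the smallness of the final error are governed by the single estimate $\|[S,P]\|<\varepsilon$; the norm-one normalization of $P$ is precisely what keeps $\|\xi\|$ bounded uniformly in $\eta$.
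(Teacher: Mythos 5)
Your proof is correct, and it follows the same global strategy as the paper's: use left-invertibility to get that $S$ is bounded below (hence injective with closed range), use quasidiagonality to produce a norm-one finite-rank idempotent $P$ almost commuting with $S$ and almost fixing a given target vector, show the compression $PSP$ is invertible on the finite-dimensional space $PE$, solve the compressed equation there, and estimate the error to conclude that the range of $S$ is dense. Where you genuinely diverge is in how the compression is handled. The paper sets $W=P-TPSP$, checks $\|W\|<\varepsilon$, and uses the Neumann series $\sum_{n\geq 0}W^nT$ as an explicit left inverse of $PSP$ on $PE$; injectivity, surjectivity (via finite dimensionality), and the norm bound on the preimage $\eta=\sum_{n\geq 0}W^nTP\xi$ all come out of that series. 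You instead observe that for $\xi\in PE$ one has $PSP\xi=S\xi+[P,S]\xi$, so $\|PSP\xi\|\geq(c-\varepsilon)\|\xi\|$ with $c=\|T\|^{-1}$: the compression inherits the lower bound of $S$ up to the commutator error. This single coercivity estimate replaces the Neumann series entirely --- it yields injectivity (hence surjectivity on $PE$) and the bound $\|\xi\|\leq\|\eta\|/(c-\varepsilon)$ on the solution in one stroke, and it makes the role of left-invertibility more transparent. The trade-off is essentially cosmetic: the paper's series constructs the preimage explicitly, while your argument is shorter and arguably cleaner; both hinge on the same two smallness hypotheses supplied by quasidiagonality.
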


\begin{proof}
Let $0<\varepsilon<\frac{1}{2}$, let $S$ be a left-invertible quasidiagonal operator on $E$, let $T\in B(E)$ be such that $TS=I$, and let $\xi\in E$ be a unit vector.
Let $P$ be a finite rank idempotent operator on $E$ of norm one such that $||[P,S]||<\varepsilon||T||^{-1}$, and $||P\xi-\xi||<\varepsilon$.
Then \[||PSP-SP||=||[P,S]P||<\varepsilon||T||^{-1}.\] Let $W=P-TPSP$. Then $WP=W$, $||W||=||T(SP-PSP)||<\varepsilon$ and \[(\sum_{n=0}^\infty W^nT)PSP=\sum_{n=0}^\infty W^n(P-W)=\lim_{n\rightarrow \infty}(P-W^{n+1})=P.\] It follows that $PSP$ is an injective operator on the finite-dimensional space $PE$, so it is surjective as an operator on $PE$.
Thus there exists $\eta\in PE$ such that $P\xi=PSP\eta$, and $\eta=\sum_{n=0}^\infty W^nTP\xi$.
Then 
\begin{align*}
||\xi-S\eta||&=||\xi-P\xi+PSP\eta-SP\eta|| \\
&< \varepsilon+||PSP\eta-SP\eta|| \\ &<\varepsilon+\varepsilon||T||^{-1}||\eta|| \\
&\leq \varepsilon+\varepsilon||T||^{-1}\sum_{n=0}^\infty||W||^n||T|| \\ &< 3\varepsilon.
\end{align*}

The arguments above show that every unit vector $\xi\in E$ is in the closure of the range of $S$. Since $S$ is bounded below, it follows that $S$ has the closed range. Hence, $S$ is invertible.
\end{proof}


One more ingredient for our result is the notion of a coarse connected component in a metric space. In a metric space $(X,d)$, two elements $x$ and $y$ are said to be $R$-connected for some $R>0$ if there is a finite sequence $x_0,\ldots,x_n$ in $X$ with $x_0=x$, $x_n=y$, and $d(x_i,x_{i+1})\leq R$ for $i=0,\ldots,n-1$. This is an equivalence relation on $X$, and the equivalence classes are called the $R$-connected components of $X$. For convenience, we also consider $R=0$, in which case the 0-connected components are just the points of $X$. We say that $X$ is coarsely connected if there exists $R>0$ such that every pair of points in $X$ is $R$-connected. 

Note that finitely generated groups are 1-connected with respect to any word metric. Moreover, a countable discrete group $\Gamma$ is finitely generated if and only if $\Gamma$ is coarsely connected with respect to any/some proper left-invariant metric \cite[Lemma 7.2]{MR2645049}. 

\begin{lem} \label{WeiLem2.4} \cite[Lemma 2.4]{Wei}
Let $(X,d)$ be an infinite metric space with bounded geometry. The following are equivalent:
\begin{enumerate}
\item For any $R>0$, there is no $R$-connected infinite sequence $\{x_n\}_{n=1}^\infty$ in $X$.
\item The space $X$ has no coarsely connected subspace containing infinitely many points.
\item There exists a sequence of disjoint (non-empty) finite subsets $\{X_n\}_{n=1}^\infty$ of $X$ such that $X=\bigsqcup_{n=1}^\infty X_n$ and $\lim_{n\rightarrow\infty} d(X_n,\bigsqcup_{i=1}^{n-1}X_i)=\infty$.
\end{enumerate}
\end{lem}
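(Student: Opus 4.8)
The plan is to prove the three conditions equivalent by establishing (i)$\Leftrightarrow$(ii) (which is essentially a matter of definitions), then the cycle (iii)$\Rightarrow$(i)$\Rightarrow$(iii), where the construction in (i)$\Rightarrow$(iii) is the real content. Throughout I would use the standing assumption of bounded geometry only to guarantee that $X$ is countable, and I would first record the reformulation of (i): since $R$-connectedness is an equivalence relation, an infinite $R$-connected sequence of distinct points exists if and only if some $R$-connected component is infinite. Thus (i) says precisely that for every $R>0$ all $R$-connected components are finite. Write $C_R(x)$ for the $R$-connected component of $x$.

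For (i)$\Leftrightarrow$(ii) I would argue contrapositively in both directions. If some $R$-component $C$ is infinite, then $C$ with the induced metric is an infinite coarsely connected subspace, because any two points of $C$ are joined by an $R$-chain which stays inside $C$ (a component contains all intermediate points of any chain it meets); this gives the failure of (ii). Conversely, if $Y$ is an infinite coarsely connected subspace with constant $R$, then every pair of points of $Y$ is $R$-connected in $X$, so $Y$ lies in a single $R$-component of $X$, which is therefore infinite; this gives the failure of (i).

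For (iii)$\Rightarrow$(i) I would fix $R>0$ and choose $N$ so that $d(X_n,\bigsqcup_{i<n}X_i)>R$ for all $n\ge N$. A short check shows that any two distinct ``tail'' blocks $X_m,X_n$ with $m<n$ and $n\ge N$ are more than $R$ apart, and that each tail block is more than $R$ from the finite ``head'' $\bigsqcup_{i<N}X_i$. Hence for $y\in X_n$ with $n\ge N$, every point within distance $R$ of $y$ already lies in $X_n$; iterating along a chain shows $C_R(y)\subseteq X_n$, which is finite. Since any infinite $R$-component, meeting only finite blocks, must meet some tail block, no infinite $R$-component can exist, giving (i).

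The main obstacle is (i)$\Rightarrow$(iii), which I would handle by a greedy construction using the finiteness of all $C_R(x)$. Enumerate $X=\{x_1,x_2,\dots\}$, fix radii $R_n\to\infty$, set $F_0=\emptyset$, and inductively define $F_n=\bigcup_{x\in F_{n-1}\cup\{x_n\}}C_{R_n}(x)$. Each $F_n$ is a finite union of finite components, hence finite; since $x\in C_{R_n}(x)$ we get $F_{n-1}\cup\{x_n\}\subseteq F_n$, so the $F_n$ increase and exhaust $X$. The crucial point is that $F_n$ is a union of \emph{entire} $R_n$-components, so its complement is too, and distinct $R_n$-components are more than $R_n$ apart; therefore $d(F_n,X\setminus F_n)>R_n$. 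Setting $X_n=F_n\setminus F_{n-1}\subseteq X\setminus F_{n-1}$ yields $d(X_n,\bigsqcup_{i<n}X_i)=d(X_n,F_{n-1})\ge d(X\setminus F_{n-1},F_{n-1})>R_{n-1}\to\infty$; discarding the empty blocks and reindexing gives the required sequence of nonempty finite sets. The only delicate verifications are that $F_n$ is genuinely a union of full $R_n$-components (which forces the separation bound) and that passing to the nonempty subsequence preserves the divergence of the gaps, both of which are routine.
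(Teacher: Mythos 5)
First, a remark on the comparison itself: the paper does not prove this lemma --- it is quoted verbatim from \cite[Lemma 2.4]{Wei} --- so there is no in-paper proof to measure your argument against; what follows assesses your proposal on its own merits.

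Your architecture is reasonable and the two substantive implications you wrote out in detail --- the greedy exhaustion $F_n=\bigcup_{x\in F_{n-1}\cup\{x_n\}}C_{R_n}(x)$ by unions of entire $R_n$-components for (i)$\Rightarrow$(iii), and the verification of (iii)$\Rightarrow$(i) --- are correct (one cosmetic point: distinct $R_n$-components are pointwise more than $R_n$ apart, but the set distance $d(F_n,X\setminus F_n)$ is only $\geq R_n$, since the infimum need not be attained; this is harmless as $R_n\to\infty$). However, there is a genuine gap at the very first step, and it is load-bearing. Your reformulation of (i), ``an infinite $R$-connected sequence of distinct points exists if and only if some $R$-connected component is infinite,'' does not follow from $R$-connectedness being an equivalence relation. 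That property gives only the easy direction: a sequence of distinct points with consecutive jumps at most $R$ lies in a single component, which is then infinite. The converse --- extracting from an infinite $R$-component an infinite sequence of \emph{distinct} points $x_1,x_2,\dots$ with $d(x_n,x_{n+1})\leq R$ --- is a K\"onig's lemma argument: the graph on $X$ whose edges join points at distance at most $R$ is locally finite precisely because of bounded geometry (each vertex has at most $N_R$ neighbours), and every infinite, connected, locally finite graph contains a ray. Without local finiteness this direction is simply false: in the ``infinite star'' (a centre $c$ and points $\ell_1,\ell_2,\dots$ with $d(c,\ell_i)=1$ and $d(\ell_i,\ell_j)=2$), the $1$-component of $c$ is infinite, yet every $1$-connected sequence of distinct points has at most three terms. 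So this step is exactly where bounded geometry enters the lemma, contradicting your standing claim that it is used ``only to guarantee that $X$ is countable.'' The gap occurs at two points of your proof: in (i)$\Leftrightarrow$(ii), the passage from ``$Y$ lies in a single infinite $R$-component'' to ``failure of (i)''; and in (i)$\Rightarrow$(iii), the appeal to finiteness of all $C_{R_n}(x)$, which is the statement (i)${}^{*}$ (all components finite) and may only be assumed after deducing it from (i) --- again the unproved direction, in contrapositive. The repair is short and standard: insert the K\"onig's lemma argument once, record (i)$\Leftrightarrow$(i)${}^{*}$, and the rest of your proof goes through as written.
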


The following theorem is a generalization of \cite[Theorem 2.3]{Wei}, which deals with the $p=2$ case.

\begin{thm} \label{thmFinite}
Let $(X,d)$ be a metric space with bounded geometry, and let $p\in[1,\infty)$. The following are equivalent:
\begin{enumerate}
\item $B^p_u(X)$ is a quasidiagonal set in $B(\ell^p(X))$.
\item $B^p_u(X)$ is stably finite.
\item $B^p_u(X)$ is finite.
\item For each $R>0$, every $R$-connected component of $X$ is finite.

\end{enumerate}
\end{thm}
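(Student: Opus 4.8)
The plan is to prove the four conditions equivalent via the cycle (i) $\Rightarrow$ (ii) $\Rightarrow$ (iii) $\Rightarrow$ (iv) $\Rightarrow$ (i), transporting the Hilbert-space argument of \cite[Theorem 2.3]{Wei} to the $\ell^p$ setting by replacing orthogonal projections with norm-one coordinate idempotents and (co)isometries with basis-injection operators, while using Proposition \ref{QDprop} in place of the $C^*$-algebraic fact that a quasidiagonal operator cannot be one-sided invertible. For (i) $\Rightarrow$ (ii), suppose $B^p_u(X)$ is a quasidiagonal set. I would first check that $M_n(B^p_u(X))$, viewed as a unital Banach subalgebra of $B(\ell^p(X)^n)$ with $\ell^p(X)^n=\ell^p(X\times\{1,\dots,n\})$ carrying the $\ell^p$-direct-sum norm, is again a quasidiagonal set: given finitely many block matrices and finitely many vectors, apply quasidiagonality of $B^p_u(X)$ to the finite set of all matrix entries and vector components to obtain a norm-one finite-rank idempotent $P$ on $\ell^p(X)$, and then use $\diag(P,\dots,P)$, whose commutators with the block matrices are controlled entrywise. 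Then, for any unital Banach algebra that happens to be a quasidiagonal set of operators, Proposition \ref{QDprop} shows every left-invertible element is invertible as an operator (hence invertible in the algebra, as its inverse must coincide with the given left inverse), so Lemma \ref{finite} makes each $M_n(B^p_u(X))$ finite; thus $B^p_u(X)$ is stably finite. The implication (ii) $\Rightarrow$ (iii) is immediate since $M_1(B^p_u(X))=B^p_u(X)$.

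For (iii) $\Rightarrow$ (iv) I would argue the contrapositive by producing a left-invertible non-invertible element, contradicting Lemma \ref{finite}. Suppose some $R$-connected component $C$ is infinite. Equip $C$ with a graph structure, joining points at distance in $(0,R]$; bounded geometry makes this graph locally finite and $R$-connectedness makes it connected, so by K\"onig's lemma it contains an infinite simple path $x_0,x_1,x_2,\dots$ with the $x_i$ distinct and $d(x_i,x_{i+1})\le R$. Let $\sigma\colon X\to X$ be the injection with $\sigma(x_n)=x_{n+1}$ and $\sigma(y)=y$ for $y\notin\{x_n\}$, and define $V\delta_z=\delta_{\sigma(z)}$. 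Since $\sigma$ is injective, $V$ is an $\ell^p$-isometry of propagation at most $R$ whose range is $\ell^p(X\setminus\{x_0\})$, so $V$ is not surjective; the finite-propagation operator $T$ given by $T\delta_{x_n}=\delta_{x_{n-1}}$ for $n\ge 1$, $T\delta_{x_0}=0$, and $T\delta_y=\delta_y$ otherwise satisfies $TV=I$ in $B^p_u(X)$. Thus $V$ is left-invertible but not invertible.

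For (iv) $\Rightarrow$ (i), if $X$ is finite take $P=I$. Otherwise (iv) forbids infinite $R$-connected sequences, so Lemma \ref{WeiLem2.4} yields a partition $X=\bigsqcup_{n=1}^\infty X_n$ into finite sets with $d(X_n,\bigsqcup_{i<n}X_i)\to\infty$. Let $P_N$ be the coordinate idempotent onto $\ell^p(\bigsqcup_{i\le N}X_i)$, which is a norm-one finite-rank idempotent with $P_N\xi\to\xi$ for every $\xi$. The crucial observation is that if $S$ has propagation at most $R$, then once $N$ is large enough that $d(X_m,\bigsqcup_{i<m}X_i)>R$ for all $m>N$, every matrix entry of $[S,P_N]$ vanishes, so $[S,P_N]=0$ exactly. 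For a finite set $M\subseteq B^p_u(X)$ and $\varepsilon>0$, approximate each element within $\varepsilon/2$ by a finite-propagation operator and choose a common large $N$; this gives $\|[S,P_N]\|<\varepsilon$ for all $S\in M$ together with the required approximation of vectors, establishing quasidiagonality.

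I expect the main obstacle to be the faithful replacement of the Hilbert-space tools by $\ell^p$ analogues, since adjoints, orthogonal projections, and spectral theory are unavailable. The three substitutes I rely on are that coordinate idempotents on $\ell^p$ have norm one (used both in the amplification of (i) $\Rightarrow$ (ii) and in (iv) $\Rightarrow$ (i)), that basis-injection operators are genuine $\ell^p$-isometries and admit explicit finite-propagation left inverses (the $\ell^p$ substitute for the unilateral shift in (iii) $\Rightarrow$ (iv)), and, most importantly, Proposition \ref{QDprop}, which supplies the incompatibility of quasidiagonality with proper one-sided invertibility. The delicate point is verifying that these combine correctly; in particular the \emph{exact} annihilation of finite-propagation commutators by $P_N$, together with the density of finite-propagation operators in $B^p_u(X)$, is what makes the quasidiagonality estimate go through for every $p\in[1,\infty)$.
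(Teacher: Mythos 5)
Your proposal is correct and follows essentially the same route as the paper: the same cycle (i)$\Rightarrow$(ii)$\Rightarrow$(iii)$\Rightarrow$(iv)$\Rightarrow$(i), with matrix amplification of quasidiagonality combined with Proposition~\ref{QDprop} and Lemma~\ref{finite}, the finite-propagation shift along an infinite $R$-connected sequence, and the coordinate idempotents coming from the decomposition in Lemma~\ref{WeiLem2.4}. The extra details you supply (the K\"onig's lemma extraction of a simple path, the explicit left inverse, and the observation that the operator inverse of a left-invertible element must lie in the algebra) only make explicit what the paper leaves implicit.
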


\begin{proof}
(i) $\Rightarrow$ (ii): 
%
%
If $B^p_u(X)$ is a quasidiagonal set in $B(\ell^p(X))$, then note that $M_n(B^p_u(X))$ is a quasidiagonal set in $B(\ell^p(X)^{\oplus n})$ for each $n\in\mathbb{N}$. It then follows from Lemma \ref{finite} and Proposition \ref{QDprop} that $M_n(B^p_u(X))$ is finite for each $n\in\mathbb{N}$.

(ii) $\Rightarrow$ (iii) is trivial.

(iii) $\Rightarrow$ (iv): The proof of this is essentially the same as that of \cite[Lemma 2.5]{Wei}.
Suppose there is an $R$-connected infinite sequence $\{x_n\}_{n=1}^\infty$ in $X$ for some $R>0$. Define an operator $S$ on $\ell^p(X)$ satisfying
\begin{align*}
S\delta_x&=\delta_x, x\in X\setminus\{x_n:n=1,2,\ldots\} \\
S\delta_{x_n}&=\delta_{x_{n+1}}, n=1,2,\ldots.
\end{align*}
Then $S$ is a non-invertible isometry on $\ell^p(X)$. Moreover, $S$ has finite propagation since $\{x_n\}_{n=1}^\infty$ is $R$-connected.
Thus $S$ is a non-invertible but left-invertible element in $B^p_u(X)$, so $B^p_u(X)$ is not finite by Lemma \ref{finite}.


(iv) $\Rightarrow$ (i): If $X$ is finite with $|X|=n$, then $B^p_u(X)=M_n(\mathbb{C})=B(\ell^p(X))$ and (i) is obvious.
If $X$ is infinite, then by Lemma \ref{WeiLem2.4}, there exists a sequence of disjoint finite subsets $\{X_n\}_{n=1}^\infty$ of $X$ such that $X=\bigsqcup_{n=1}^\infty X_n$ and $\lim_{n\rightarrow\infty} d(X_n,\bigsqcup_{i=1}^{n-1}X_i)=\infty$.

The remainder of the proof is the same as that of the implication [(f) $\Rightarrow$ (a)] in \cite[Theorem 2.3]{Wei} upon replacing $\ell^2(X)$ by $\ell^p(X)$, and we reproduce it here for the reader's convenience.

Since the set of finite propagation operators in $B^p_u(X)$ is dense in $B^p_u(X)$, it suffices to show that for every finite set $M\subset B^p_u(X)$ consisting of finite propagation operators, every finite set of vectors $F\subset\ell^p(X)$, and every $\varepsilon>0$, there exists a finite rank idempotent operator $P\in B(\ell^p(X))$ of norm one such that $||TP-PT||<\varepsilon$ and $||P\xi-\xi||<\varepsilon$ for all $T\in M$ and $\xi\in F$.

Since $\ell^p(X)=\bigoplus_n\ell^p(X_n)$, let $P_n$ be the idempotent operator $\ell^p(X)\rightarrow\bigoplus_{i=1}^n\ell^p(X_i)$. Then $P_n$ is a finite rank idempotent operator of norm one, and the increasing sequence $(P_n)$ converges strongly to the identity. Thus there exists $n_0$ such that $||P_n\xi-\xi||<\varepsilon$ for all $\xi\in F$ and $n\geq n_0$.
Since $M$ is a finite set of finite propagation operators and $\lim_{n\rightarrow\infty}d(X_n,\bigsqcup_{i=1}^{n-1}X_i)=\infty$, there exists $n_0'$ such that $(I-P_n)TP_n=P_nT(I-P_n)=0$ for all $T\in M$ and $n\geq n_0'$.
Thus $||TP_n-P_nT||<\varepsilon$ and $||P_n\xi-\xi||<\varepsilon$ for all $T\in M$, $\xi\in F$, and $n\geq\max(n_0,n_0')$.
\end{proof}

\subsection{Proper infiniteness}

We will show that proper infiniteness in standard form of the $\ell^p$ uniform Roe algebra $B^p_u(X)$ of a bounded geometry metric space $X$ is equivalent to $X$ being non-amenable. We also prove other characterizations involving $L^p$ Cuntz algebras, the $K_0$ group of $B^p_u(X)$, and normalized traces on $B^p_u(X)$.

\begin{defn}
Let $(X,d)$ be a metric space with bounded geometry. For $A\subset X$ and $R>0$, let $\partial_R(A)=\{ x\in X: d(x,A)\leq R\;\text{and}\;d(x,X\setminus A)\leq R \}$.
\begin{enumerate}
\item Let $R>0$ and $\varepsilon>0$. A nonempty finite set $F\subset X$ is called an $(R,\varepsilon)$-F\o lner set if $\frac{|\partial_R F|}{|F|}\leq\varepsilon$. 
\item $(X,d)$ is said to be amenable if there exists an $(R,\varepsilon)$-F\o lner set for every $R>0$ and $\varepsilon>0$.
\end{enumerate}
\end{defn}

Note that amenability is a coarse invariant for (pseudo-)metric spaces with bounded geometry \cite[Proposition 3.D.33]{CornHarpe}.

\begin{defn}
Let $(X,d)$ be a metric space. A partial translation on $X$ is a triple $(A,B,t)$, where $A,B\subset X$ and $t:A\rightarrow B$ is a bijection such that the graph of $t$ is controlled, i.e., $\sup_{x\in A}d(x,t(x))<\infty$. 

We call $A$ the domain of $t$, denoted by $\mathrm{dom}(t)$, and we call $B$ the range of $t$, denoted by $\mathrm{ran}(t)$.

Given two partial translations $t$ and $t'$, we may form their composition $t\circ t'$ by restricting the domain to $(t')^{-1}(\mathrm{dom}(t)\cap \mathrm{ran}(t'))$ and restricting the range to $t(\mathrm{dom}(t)\cap\mathrm{ran}(t'))$.

\end{defn}

Note that any partial translation $t$ on $X$ gives rise to an operator $V_t$ on $\ell^p(X)$ with finite propagation given by
\[ (V_t)_{xy}=\begin{cases} 1 &\text{if $x=t(y)$} \\ 0 &\text{otherwise} \end{cases}. \]
In fact, the linear span of such operators is dense in $B^p_u(X)$ (cf. \cite[Section 4]{amenUR}).

\begin{defn}
A mean $\mu:\mathcal{P}(X)\rightarrow[0,1]$ on a metric space $(X,d)$ is a normalized, finitely additive map on the set of all subsets of $X$. A mean is said to be invariant under partial translations if $\mu(A)=\mu(B)$ for all partial translations $(A,B,t)$ on $(X,d)$.
\end{defn}

By the Riesz representation theorem, any mean $\mu$ on $(X,d)$ induces a linear functional $\phi_\mu:\ell^\infty(X)\rightarrow\mathbb{C}$ of norm one such that $\mu(Y)=\phi_\mu(1_Y)$ for any subset $Y\subset X$, where $1_Y$ denotes the characteristic function of $Y$. Moreover, $\mu$ is invariant under partial translations if and only if $\phi_\mu(f)=\phi_\mu(f\circ t)$ for any partial translation $t$ and any $f\in\ell^\infty(X)$ supported on $\mathrm{ran}(t)$.

\begin{defn}
Let $(X,d)$ be a metric space. A paradoxical decomposition of $X$ is a partition $X=X_+\sqcup X_-$ such that there exist two partial translations $t_\pm:X\rightarrow X_\pm$.
\end{defn}

\begin{thm}\cite[Theorem 2.17]{AmenCoa}
Let $(X,d)$ be a metric space with bounded geometry. Then the following are equivalent:
\begin{enumerate}
\item $(X,d)$ is amenable.
\item $X$ admits no paradoxical decomposition.
\item There exists a mean $\mu$ on $X$ that is invariant under partial translations.
\end{enumerate}
\end{thm}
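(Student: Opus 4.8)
The plan is to establish the cycle of implications (i) $\Rightarrow$ (iii) $\Rightarrow$ (ii) $\Rightarrow$ (i), where throughout I write $N_R(F)=\{x\in X:d(x,F)\leq R\}$ for the closed $R$-neighborhood. The two implications (i) $\Rightarrow$ (iii) and (iii) $\Rightarrow$ (ii) are soft averaging/counting arguments, while the combinatorial core of the theorem is the contrapositive of (ii) $\Rightarrow$ (i): that the failure of amenability forces a paradoxical decomposition.

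For (i) $\Rightarrow$ (iii) I would fix a sequence of $(R_n,\varepsilon_n)$-F\o lner sets $F_n$ with $R_n\to\infty$ and $\varepsilon_n\to 0$, and form the finitely additive means $\mu_n(A)=|A\cap F_n|/|F_n|$ on $\mathcal{P}(X)$. Regarding each $\mu_n$ as a norm-one functional on $\ell^\infty(X)$ and invoking weak-$*$ compactness of the unit ball, I pass to a limit point $\mu$ along a free ultrafilter, which is again a mean. To check invariance under a partial translation $(A,B,t)$ with $\sup_{x\in A}d(x,t(x))=R_0<\infty$, observe that for $R_n>R_0$ every point of $A\cap F_n$ whose image under $t$ leaves $F_n$ lies in $\partial_{R_0}F_n$, and symmetrically; hence $|\mu_n(A)-\mu_n(B)|\leq 2|\partial_{R_0}F_n|/|F_n|\to 0$, so $\mu(A)=\mu(B)$. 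For (iii) $\Rightarrow$ (ii), suppose an invariant mean $\mu$ and a paradoxical decomposition $X=X_+\sqcup X_-$ with partial translations $t_\pm:X\to X_\pm$ both existed. Since $t_\pm$ have domain $X$ and ranges $X_\pm$, invariance forces $\mu(X_+)=\mu(X)=\mu(X_-)=1$, whereas finite additivity forces $\mu(X_+)+\mu(X_-)=\mu(X)=1$, a contradiction.

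The substance is (ii) $\Rightarrow$ (i), which I prove in contrapositive form: non-amenable implies paradoxical. The first step is to upgrade the failure of the F\o lner condition to a clean neighborhood-expansion estimate. Using bounded geometry, the inner and outer boundaries are comparable --- each point of the inner boundary is within $R$ of a point of the outer boundary $N_R(F)\setminus F$, and this correspondence is at most $N_R$-to-one --- so non-amenability yields $R_0>0$ and $c>1$ with $|N_{R_0}(F)|\geq c|F|$ for every finite $F\subseteq X$. Iterating and using $N_{R_0}(N_{R_0}(F))\subseteq N_{2R_0}(F)$ gives $|N_{kR_0}(F)|\geq c^k|F|$, so choosing $k$ with $c^k\geq 2$ and setting $S=kR_0$ produces the doubling estimate $|N_S(F)|\geq 2|F|$ for all finite $F$. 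I then form the bipartite \emph{doubling graph} $G$ whose left vertex set is the disjoint union $X_1\sqcup X_2$ of two copies of $X$, whose right vertex set is $X$, and in which the copy of $x$ is joined to $y$ precisely when $d(x,y)\leq S$; bounded geometry makes $G$ locally finite. Writing $N_G$ for the neighborhood in $G$, the doubling estimate verifies Hall's condition on the left: for a finite $A=A_1\sqcup A_2$ with combined projection $A'\subseteq X$ one has $N_G(A)=N_S(A')$ and $|A|\leq 2|A'|\leq|N_S(A')|$, while Hall's condition on the right is immediate since $|N_G(B)|=2|N_S(B)|\geq 2|B|$. The two-sided marriage theorem for countable locally finite bipartite graphs then supplies a perfect matching, i.e.\ a bijection $\Psi:X_1\sqcup X_2\to X$ displacing each copy of $x$ by at most $S$. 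Setting $t_+=\Psi|_{X_1}$, $t_-=\Psi|_{X_2}$, $X_+=\Psi(X_1)$, and $X_-=\Psi(X_2)$ gives $X=X_+\sqcup X_-$ together with controlled bijections $t_\pm:X\to X_\pm$, which is exactly a paradoxical decomposition.

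The hard part will be the combinatorial step: verifying that the neighborhood-doubling condition yields Hall's condition on \emph{both} sides and invoking the perfect-matching (rather than one-sided) version of the marriage theorem in the infinite locally finite setting, since one-sided matching only embeds two disjoint controlled copies of $X$ into $X$ and would otherwise require a Schr\"oder--Bernstein argument to absorb the leftover points into a genuine partition. The preliminary reduction from the given $\partial_R$-F\o lner formulation of non-amenability to the neighborhood-doubling estimate is also delicate, and it is precisely here that bounded geometry is indispensable.
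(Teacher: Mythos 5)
This theorem is stated in the paper purely as a citation of \cite[Theorem 2.17]{AmenCoa}; no proof is given there, so there is no internal argument to compare yours against. Your proof is correct, and it is essentially the standard proof of this metric-space Tarski-type theorem found in the cited literature: (i)~$\Rightarrow$~(iii) by weak-$*$ limits of the normalized counting means on a F\o lner sequence, (iii)~$\Rightarrow$~(ii) by the mass contradiction $1=\mu(X)=\mu(X_+)+\mu(X_-)=2$, and the contrapositive of (ii)~$\Rightarrow$~(i) by converting non-amenability into the doubling estimate $|N_S(F)|\geq 2|F|$ (your inner-to-outer boundary comparison, at most $N_R$-to-one, is exactly where bounded geometry enters) and then applying Hall's marriage condition on both sides of the bipartite doubling graph. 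The one step worth making fully explicit is the ``two-sided'' marriage theorem you invoke: for countable locally finite bipartite graphs it follows from two applications of the one-sided (compactness/K\"onig) version of Hall's theorem together with the Cantor--Schr\"oder--Bernstein argument for matchings, which produces a bijection contained in the union of the two matchings as relations and hence still displaces each point by at most $S$; with that standard fact in hand, your argument is complete and matches the route of the cited source.
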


\begin{defn}
Let $A$ be a unital Banach algebra. Then a normalized trace on $A$ is a linear functional $\tau$ on $A$ satisfying the following conditions:
\begin{enumerate}
\item $\tau(1_A)=1$,
\item $||\tau||=1$,
\item $\tau(ba)=\tau(ab)$ for all $a,b\in A$.
\end{enumerate}
\end{defn}
When $A$ is a unital $C^*$-algebra, normalized traces are exactly the tracial states.

\begin{lem}\cite[Lemma 1.6]{DLR} \label{Lem:comm}
Let $A$ be a unital, properly infinite algebra in standard form. Then $1_A$ is the sum of two commutators.
\end{lem}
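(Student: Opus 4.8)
The plan is to show that in a unital, properly infinite algebra $A$ in standard form, the unit $1_A$ can be written as a sum of two commutators $[a,b] + [c,d]$, where $[x,y] := xy - yx$. By hypothesis there exists an idempotent $e \in A$ with $e \sim 1_A \sim 1_A - e$. Unpacking the definition of algebraic equivalence, the first relation $1_A \sim e$ gives elements $u, v \in A$ with $uv = 1_A$ and $vu = e$, and the second relation $1_A \sim 1_A - e$ gives elements $s, t \in A$ with $st = 1_A$ and $ts = 1_A - e$. I would first record these four elements together with the two idempotent decompositions they induce.

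The key computation is to exhibit $1_A$ explicitly as a sum of two commutators built from $u, v, s, t$. The natural candidate comes from observing that $e = vu$ while $uv = 1_A$, so $[u,v] = uv - vu = 1_A - e$; symmetrically $[s,t] = st - ts = 1_A - (1_A - e) = e$. Adding these two commutators yields
\begin{equation*}
[u,v] + [s,t] = (1_A - e) + e = 1_A.
\end{equation*}
Thus $1_A$ is the sum of the two commutators $[u,v]$ and $[s,t]$, which is exactly the claim. The entire argument reduces to correctly extracting the equivalence data and performing this cancellation.

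The main point requiring care is bookkeeping the direction of the equivalence relations: the definition states $e \sim f$ means $e = xy$ and $f = yx$ for some $x, y$, so one must be attentive to which product gives $1_A$ and which gives the idempotent, since this determines the sign pattern in each commutator. There is genuinely no analytic obstacle here — the result is purely algebraic and holds in any unital algebra, with no appeal to the Banach structure, norms, or the specific nature of $B^p_u(X)$. The only thing to verify is that the two idempotents produced, namely $1_A - e$ from the first equivalence and $e$ from the second, are complementary so that their sum telescopes to $1_A$, which follows immediately from the standard form hypothesis $e \sim 1_A \sim 1_A - e$.
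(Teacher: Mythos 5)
Your proof is correct and is essentially identical to the paper's own argument: both extract the equivalence data from $e \sim 1_A \sim 1_A - e$ and observe that the two resulting commutators are $1_A - e$ and $e$, which sum to $1_A$. The only difference is notational (the paper writes $a_i, b_i$ where you write $u,v,s,t$).
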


\begin{proof}
Take an idempotent element $e\in A$ such that $e\sim 1_A\sim 1_A-e$. Take $a_1,b_1,a_2,b_2\in A$ such that $a_1b_1=e$, $a_2b_2=1_A-e$, and $b_1a_1=1_A=b_2a_2$. Then $[b_1,a_1]+[b_2,a_2]=1_A-e+1_A-(1_A-e)=1_A$.
\end{proof}

\begin{cor} \label{Cor:PInotrace}
Let $A$ be a unital algebra such that $M_n(A)$ is properly infinite in standard form for some $n\in\mathbb{N}$. Then there is no linear functional $\tau$ on $A$ satisfying $\tau(1_A)=1$ and $\tau(ab)=\tau(ba)$ for all $a,b\in A$. 

In particular, if $A$ is a unital Banach algebra such that $M_n(A)$ is properly infinite in standard form for some $n\in\mathbb{N}$, then there are no normalized traces on $A$.
\end{cor}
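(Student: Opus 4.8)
The plan is to argue by contradiction: assuming such a functional $\tau$ on $A$ exists, I would \emph{amplify} it to a tracial functional $\tau_n$ on $M_n(A)$ and then invoke Lemma \ref{Lem:comm}, which tells us that the unit of a unital algebra that is properly infinite in standard form is a sum of two commutators. Since any tracial functional annihilates commutators, this will force $\tau_n(1_{M_n(A)})=0$, contradicting the fact that $\tau_n(1_{M_n(A)})=n\neq 0$.

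Concretely, given a linear functional $\tau$ on $A$ with $\tau(1_A)=1$ and $\tau(ab)=\tau(ba)$ for all $a,b\in A$, I would define $\tau_n$ on $M_n(A)$ by $\tau_n\big((x_{ij})_{i,j}\big)=\sum_{i=1}^n\tau(x_{ii})$, that is, by applying $\tau$ to each diagonal entry and summing. The one step requiring care is checking that $\tau_n$ is again tracial. Writing $(XY)_{ii}=\sum_j x_{ij}y_{ji}$ gives $\tau_n(XY)=\sum_{i,j}\tau(x_{ij}y_{ji})$; applying the identity $\tau(x_{ij}y_{ji})=\tau(y_{ji}x_{ij})$ entrywise and relabelling the summation indices $i\leftrightarrow j$ yields exactly $\tau_n(YX)$. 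Hence $\tau_n$ vanishes on every commutator in $M_n(A)$, while its normalization reads $\tau_n(1_{M_n(A)})=\sum_{i=1}^n\tau(1_A)=n$.

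By hypothesis $M_n(A)$ is properly infinite in standard form, so Lemma \ref{Lem:comm} provides elements with $1_{M_n(A)}=[U_1,V_1]+[U_2,V_2]$. Applying $\tau_n$ and using that it kills commutators, I get $n=\tau_n(1_{M_n(A)})=0$, which is absurd over $\mathbb{C}$. This contradiction establishes the first assertion. For the second assertion, a normalized trace on a unital Banach algebra $A$ in particular satisfies $\tau(1_A)=1$ and $\tau(ab)=\tau(ba)$, so it falls under the scope of the first assertion; thus the nonexistence of normalized traces is immediate, and the norm condition $\|\tau\|=1$ plays no role.

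I do not expect a genuine obstacle here: the argument is purely formal once Lemma \ref{Lem:comm} is granted, and the only point demanding attention is the bookkeeping of indices verifying that the diagonal-sum amplification $\tau_n$ is a trace on $M_n(A)$. Note also that the construction is entirely algebraic, which is why no continuity hypothesis on $\tau$ is needed and the Banach-algebra case follows without extra work.
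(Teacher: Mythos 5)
Your proof is correct and follows essentially the same route as the paper's: both amplify $\tau$ to the diagonal-sum functional $\tau_n$ on $M_n(A)$, note $\tau_n(I_n)=n$, and derive the contradiction from Lemma \ref{Lem:comm}. The only difference is that you spell out the index bookkeeping showing $\tau_n$ is tracial, which the paper leaves implicit.
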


\begin{proof}
Suppose $\tau$ is a linear functional on $A$ satisfying $\tau(1_A)=1$ and $\tau(ab)=\tau(ba)$ for all $a,b\in A$. Then for each $n\in\mathbb{N}$, $\tau$ extends to a linear functional $\tau_n$ on $M_n(A)$ satisfying $\tau_n(I_n)=n\tau(1_A)=n$, and $\tau_n(xy)=\tau_n(yx)$ for all $x,y\in M_n(A)$. On the other hand, if $M_n(A)$ is properly infinite in standard form, then the identity $I_n$ is the sum of two commutators by Lemma \ref{Lem:comm}, so $\tau_n(I_n)=0$.
\end{proof}

\begin{rem} 
The situation is different if we consider bounded (not necessarily normalized) traces, i.e., bounded linear functionals $\tau$ satisfying $\tau(ab)=\tau(ba)$. There are no nonzero (not necessarily normalized) traces on a unital, properly infinite $C^\ast$-algebra because every element in such an algebra is a sum of two commutators \cite[Remark 3]{Pop}. However, there can be nonzero bounded traces on a unital, properly infinite Banach algebra in general. In \cite{DLR}, there are examples of unital Banach $\ast$-algebras that are properly infinite in standard form and have nonzero, bounded traces. Moreover, they can be hermitian or $\ast$-semisimple (see \cite[Definition 1.10]{DLR}). Note that all unital $C^\ast$-algebras are hermitian \cite[Proposition 3.2.3(v)]{Dales} and $\ast$-semisimple \cite[Corollary 3.2.13]{Dales}.
\end{rem}

The final ingredient of our theorem is the $L^p$ analog of Cuntz algebras that were defined by Phillips in \cite{Phil12}. These are norm closures of certain representations of Leavitt algebras on $L^p$ spaces.

For $d\in\{2,3,4,\ldots\}$, define the Leavitt algebra $L_d$ to be the universal complex associative algebra on generators $s_1,\ldots,s_d,t_1,\ldots,t_d$ satisfying the relations
\begin{enumerate}
\item $t_js_j=1$ for $j\in\{1,\ldots,d\}$,
\item $t_js_k=0$ if $j\neq k$,
\item $\sum_{j=1}^d s_jt_j=1$.
\end{enumerate}
This is a special case of the algebras introduced by Leavitt \cite{Leavitt62,Leavitt65} who considered these algebras over arbitrary fields.

\begin{defn}
Let $A$ be a unital complex algebra, and let $E$ be a nonzero complex Banach space. We say that $\pi$ is a representation of $A$ on $E$ if $\pi$ is a unital algebra homomorphism from $A$ to $B(E)$.
\end{defn}

For $p\in[1,\infty]$, we say that a Banach algebra $A$ is an $L^p$ operator algebra if there is a measure space $(X,\mu)$ and an isometric representation of $A$ on $L^p(X,\mu)$. Clearly, $B^p_u(X)$ is an $L^p$ operator algebra on $l^p(X)$. We now recall the definition of $L^p$ Cuntz algebras, which are also $L^p$ operator algebras.

\begin{defn} \cite[Definition 6.3 and Definition 6.4]{Phil12}
Let $(X,\mathcal{B},\mu)$ and $(Y,\mathcal{C},\nu)$ be $\sigma$-finite measure spaces, and let $p\in[1,\infty]$. A linear map $s\in B(L^p(X,\mu),L^p(Y,\nu))$ is called a spatial partial isometry if there exists a quadruple $(E,F,S,g)$ in which $E\in\mathcal{B}$, $F\in\mathcal{C}$, $S$ is a bijective measurable set transformation from $(E,\mathcal{B}|_E,\mu|_E)$ to $(F,\mathcal{C}|_F,\nu|_F)$ such that $\nu|_F$ is $\sigma$-finite, $g$ is a $\mathcal{C}$-measurable function on $F$ such that $|g(y)|=1$ for almost all $y\in F$, and
\[ (s\xi)(y)=\begin{cases} g(y)\left(\left[ \frac{dS_*(\mu|_E)}{d(\nu|_F)} \right](y)\right)^{1/p}S_*(\xi|_E)(y) &\text{if}\;y\in F \\ 0 &\text{if}\;y\notin F \end{cases}. \]
\end{defn}

Given a spatial partial isometry $s\in B(L^p(X,\mu),L^p(Y,\nu))$ as defined above, there exists a unique spatial partial isometry $t\in B(L^p(Y,\nu),L^p(X,\mu))$ with quadruple $(F,E,S^{-1},(S^{-1})_*({g})^{-1})$. Moreover, the operator $ts$ is multiplication by the characteristic function of $E$, while the operator $st$ is multiplication by the characteristic function of $F$ (see \cite[Lemma 6.12]{Phil12}). The spatial partial isometry $t$ is called the reverse of $s$.

\begin{defn} \cite[Definition 7.4]{Phil12}
Let $p\in[1,\infty]$, let $(X,\mu)$ be a $\sigma$-finite measure space, and let $\rho:L_d\rightarrow B(L^p(X,\mu))$ be a representation.
We say that $\rho$ is spatial if for each $j$, the operators $\rho(s_j)$ and $\rho(t_j)$ are spatial partial isometries, with $\rho(t_j)$ being the reverse of $\rho(s_j)$. 
\end{defn}

In \cite[Theorem 8.7]{Phil12}, it is shown that any two spatial representations of $L_d$ on $L^p(X,\mu)$ with $(X,\mu)$ $\sigma$-finite give isometrically isomorphic Banach algebras, so the following definition makes sense.

\begin{defn} \cite[Definition 8.8]{Phil12}\label{Op_d}
Let $d\in\{2,3,4,\ldots\}$ and let $p\in[1,\infty)$. Define $\mathcal{O}^p_d$ to be the completion of $L_d$ in the norm $a\mapsto||\rho(a)||$ for any spatial representation $\rho$ of $L_d$ on a space of the form $L^p(X,\mu)$, where $(X,\mu)$ is $\sigma$-finite. The algebras $\mathcal{O}^p_d$ are called the $L^p$ Cuntz algebras.
\end{defn}

When $p=2$, we get the original Cuntz algebra $\mathcal{O}_d$ introduced in \cite{Cuntz77}.

The following theorem is an analog of \cite[Theorem 4.9]{amenUR}, which deals with the $p=2$ case.

\begin{thm}	\label{nonamenThm}
Let $(X,d)$ be a metric space with bounded geometry, and let $p\in[1,\infty)$. The following are equivalent:
\begin{enumerate}
\item $B^p_u(X)$ is properly infinite in standard form.
\item $M_n(B^p_u(X))$ is properly infinite in standard form for some $n\in\mathbb{N}$.
\item $M_n(B^p_u(X))$ is properly infinite in standard form for all $n\in\mathbb{N}$.
\item $X$ is non-amenable.
\item There is a unital isometric embedding of $\mathcal{O}^p_2$ into $B^p_u(X)$.
\item $[1]_0=[0]_0$ in the algebraic $K_0$ group $K_0(B^p_u(X))$.
\item $B^p_u(X)$ has no normalized trace.
\item There is no unital linear functional $\psi$ of norm one on $B(\ell^p(X))$ such that $\psi(aT)=\psi(Ta)$ for all $T\in B(\ell^p(X))$ and all $a\in B^p_u(X)$.
\end{enumerate}
\end{thm}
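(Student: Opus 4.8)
The plan is to establish all eight statements as one equivalence class by running the cycle
\[
\text{(iv)}\Rightarrow\text{(v)}\Rightarrow\text{(i)}\Rightarrow\text{(vi)}\Rightarrow\text{(vii)}\Rightarrow\text{(viii)}\Rightarrow\text{(iv)}
\]
and then attaching the matrix conditions. The implications (i)$\Rightarrow$(ii) and (iii)$\Rightarrow$(ii) are trivial, (ii)$\Rightarrow$(vii) is exactly Corollary~\ref{Cor:PInotrace}, and to feed (iii) back in I would prove (iv)$\Rightarrow$(iii) separately once the core cycle is in place. Inside the cycle, (i)$\Rightarrow$(vi) is a one-line $K$-theoretic computation: if $e\sim 1\sim 1-e$ with $e\perp(1-e)$, then in the algebraic $K_0$ group $[1]_0=[e]_0+[1-e]_0=[1]_0+[1]_0$, so $[1]_0=[0]_0$.

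The heart of the forward direction is the passage from non-amenability to a copy of $\mathcal{O}^p_2$. First I would invoke the quoted theorem of \cite{AmenCoa}: non-amenability of $X$ yields a paradoxical decomposition $X=X_+\sqcup X_-$ with partial translations $t_\pm\colon X\to X_\pm$. Writing $V_\pm$ for the finite-propagation operators $V_{t_\pm}$ and $V_\pm^{\mathrm{rev}}$ for their reverses (the operators of $t_\pm^{-1}$), I would verify on $\ell^p(X)=L^p(X,\text{counting measure})$ the Leavitt relations with $s_1=V_+,\,s_2=V_-,\,t_1=V_+^{\mathrm{rev}},\,t_2=V_-^{\mathrm{rev}}$: $t_js_j=I$ since $t_\pm$ is a bijection of $X$ onto $X_\pm$; $t_js_k=0$ for $j\neq k$ since $X_+\cap X_-=\varnothing$; and $s_1t_1+s_2t_2=1_{X_+}+1_{X_-}=I$ since $X=X_+\sqcup X_-$. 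As each $s_j,t_j$ is a spatial partial isometry with $t_j$ the reverse of $s_j$, this is a spatial representation of $L_2$ inside $B^p_u(X)$, so the uniqueness of the spatial norm behind Definition~\ref{Op_d} upgrades it to a unital isometric embedding $\mathcal{O}^p_2\hookrightarrow B^p_u(X)$, giving (v). For (v)$\Rightarrow$(i) I would note that $\mathcal{O}^p_2$ is itself properly infinite in standard form (the idempotent $e=s_1t_1$ satisfies $e\sim 1$ via $t_1,s_1$ and $1-e=s_2t_2\sim 1$ via $t_2,s_2$) and that a unital embedding carries this decomposition into $B^p_u(X)$.

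On the trace side, for (vi)$\Rightarrow$(vii) I would use that a normalized trace $\tau$ induces a well-defined homomorphism $K_0(B^p_u(X))\to\mathbb{C}$ on the algebraic $K_0$ group—$\tau(xy)=\tau(yx)$ makes $\tau$ constant on algebraic-equivalence classes of idempotents and additive under direct sums—sending $[1]_0\mapsto1$ and $[0]_0\mapsto0$, so $[1]_0=[0]_0$ excludes any normalized trace. For (vii)$\Rightarrow$(viii) I argue contrapositively: any $\psi$ as in (viii) restricts to a normalized trace on $B^p_u(X)$ (unitality and $\psi(ab)=\psi(ba)$ for $a,b\in B^p_u(X)$ follow by taking $T=b$, and the norm is $1$ since $\psi(I)=1$). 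Finally (viii)$\Rightarrow$(iv) I prove contrapositively: if $X$ is amenable, \cite{AmenCoa} gives a partial-translation-invariant mean $\mu$ with functional $\phi_\mu$ on $\ell^\infty(X)$, and I set $\psi(T)=\phi_\mu\big((T_{xx})_{x\in X}\big)$ for $T=(T_{xy})\in B(\ell^p(X))$; this is unital of norm one because $|T_{xx}|\leq\|T\|$ and $(I)_{xx}\equiv1$, and the identity $\psi(aT)=\psi(Ta)$ reduces by linearity and density to $a=V_t$, where the two diagonals are a function $f$ supported on $\mathrm{ran}(t)$ and $f\circ t$, hence agree under $\phi_\mu$ by invariance. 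To close (iii), once (i)$\Leftrightarrow$(iv) is known for all spaces I would use the isometric isomorphism $M_n(B^p_u(X))\cong B^p_u(X\times F)$ with $F=\{1,\dots,n\}$ of finite diameter, together with the coarse equivalence of $X\times F$ and $X$ and the coarse invariance of amenability (\cite[Proposition 3.D.33]{CornHarpe}), and apply (iv)$\Rightarrow$(i) to $X\times F$.

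I expect the main obstacle to be the isometric copy of $\mathcal{O}^p_2$: with no $\ast$-structure available, one must check by hand that the partial-translation operators and their reverses satisfy Phillips's definition of spatial partial isometries on $\ell^p(X)$ and that the resulting representation of $L_2$ is spatial, so as to legitimately invoke the uniqueness of the spatial $L^p$-norm (\cite[Theorem~8.7]{Phil12}) that makes the embedding isometric. By comparison, the functional $\psi$ in (viii)$\Rightarrow$(iv) and the amplification $M_n(B^p_u(X))\cong B^p_u(X\times F)$ are routine, though the diagonal computation in the trace identity and the preservation of finite propagation under amplification still need to be carried out with care.
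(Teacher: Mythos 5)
Your proposal is correct and follows essentially the same route as the paper: the core cycle (iv)$\Rightarrow$(v)$\Rightarrow$(i)$\Rightarrow$(vi)$\Rightarrow$(vii)$\Rightarrow$(viii)$\Rightarrow$(iv) uses exactly the paper's ingredients (paradoxical decomposition giving a spatial representation of $L_2$ and hence an isometric copy of $\mathcal{O}^p_2$; the $K_0$ computation from $e\sim 1\sim 1-e$; the induced map on $K_0$ from a normalized trace; and $\psi=\phi_\mu\circ E_0$ for the contrapositive of (viii)$\Rightarrow$(iv)), and (ii) is tied in via Corollary \ref{Cor:PInotrace} just as in the paper. The one genuine divergence is your treatment of (iii): the paper disposes of (i)$\Leftrightarrow$(iii) algebraically (a unital copy of the Leavitt relations in $A$ amplifies diagonally to $M_n(A)$), whereas you prove (iv)$\Rightarrow$(iii) by identifying $M_n(B^p_u(X))$ with $B^p_u(X\times F)$ for $F=\{1,\dots,n\}$ and invoking coarse invariance of amenability; both work, but the paper's route is shorter and purely algebraic, while yours needs the (routine but nontrivial) check that the block-matrix identification respects finite propagation and closures. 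A second, minor difference worth noting: your direct verification that the partial-translation operators and their reverses are spatial partial isometries makes the representation spatial by definition, uniformly in $p\in[1,\infty)$, whereas the paper splits off $p=2$ to quote the $C^\ast$-algebraic result of \cite{amenUR} and uses \cite[Theorem 7.7]{Phil12} (stated for $p\neq 2$) for the rest; your uniform argument is legitimate since the uniqueness theorem behind Definition \ref{Op_d} covers all $p$.
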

\begin{rem}
When $p=2$, by \cite[Theorem 4.9]{amenUR}, we can replace ``properly infinite in standard form'' by ``properly infinite'' in statements (i), (ii), and (iii).
\end{rem}

\begin{proof} The equivalence between (i) and (iii) is straightforward. We will show (iv) $\Rightarrow$ (v) $\Rightarrow$ (vi) $\Rightarrow$ (vii) $\Rightarrow$ (viii) $\Rightarrow$ (iv), and (v) $\Rightarrow$ (i) $\Rightarrow$ (ii) $\Rightarrow$ (vii). 

(iv) $\Rightarrow$ (v): 
The case $p=2$ follows from \cite[Theorem 4.9]{amenUR} so we will consider $p\in[1,\infty)\setminus\{2\}$.
If $X$ is non-amenable, then it has a paradoxical decomposition $X=X_1\sqcup X_2$ with partial translations $P_i:X\rightarrow X_i$. These partial translations give rise to elements $S_i\in B^p_u(X)$. Similarly, $P_i^{-1}:X_i\rightarrow X$ are partial translations, and they give rise to elements $T_i\in B^p_u(X)$. The elements $T_i$ and $S_i$ satisfy the following relations:
\begin{enumerate}[(I)]
\item $T_iS_i=1$ for $i=1,2$,
\item $T_iS_j=0$ if $i\neq j$,
\item $S_1T_1+S_2T_2=1$.
\end{enumerate}
Thus there is a unital homomorphism $\rho$ from the Leavitt algebra $L_2$ to $B^p_u(X)$ sending the generators $s_i$ and $t_i$ to $S_i$ and $T_i$ respectively. Using \cite[Lemma 6.16]{Phil12} (or directly from the definition), one can check that each $S_i$ is a spatial partial isometry. Then by \cite[Theorem 7.7]{Phil12}, $\rho$ is a spatial representation. Hence it extends to a unital isometric homomorphism from $\mathcal{O}^p_2$ to $B^p_u(X)$.

(v) $\Rightarrow$ (vi): Let $\rho:L_d\rightarrow L^p(X,\mu)$ be any spatial representation with $(X,\mu)$ $\sigma$-finite. Then we have $\rho(t_1s_1)=\rho(t_2s_2)=1$ and $\rho(s_1t_1+s_2t_2)=1$ in $\mathcal{O}^p_2$, and thus in $B^p_u(X)$. Moreover, $\rho(s_1t_1)$ and $\rho(s_2t_2)$ are orthogonal idempotents in $\mathcal{O}^p_2$, and thus in $B^p_u(X)$. Now 
\begin{align*}
[1]_0 &= [\rho(s_1t_1)+\rho(s_2t_2)]_0 \\
&= [\rho(s_1t_1)]_0+[\rho(s_2t_2)]_0 \\
&= [\rho(t_1s_1)]_0+[\rho(t_2s_2)]_0 \\
&= [1]_0+[1]_0.
\end{align*}
Hence $[1]_0=[0]_0$ in $K_0(B^p_u(X))$.

(vi) $\Rightarrow$ (vii): If $\tau:B^p_u(X)\rightarrow\mathbb{C}$ is a normalized trace, then it induces a group homomorphism $\tau_*:K_0(B^p_u(X))\rightarrow\mathbb{R}$ with $\tau_*([1]_0)=\tau(1)=1$ while $\tau_*([0]_0)=\tau(0)=0$.

(vii) $\Rightarrow$ (viii): If $\psi$ satisfies the conditions in (viii), then its restriction to $B^p_u(X)$ is a normalized trace.

(viii) $\Rightarrow$ (iv): 
Note that $B^p_u(X)$ contains $\ell^\infty(X)$ as the subalgebra of diagonal matrices, where elements of $\ell^\infty(X)$ act as multiplication operators on $\ell^p(X)$, and there is a conditional expectation \[E_0:B(\ell^p(X))\rightarrow\ell^\infty(X),\] i.e., $E_0$ is a linear map satisfying
\begin{itemize}
\item $||E_0||=1$,
\item $E_0(a)=a$ for all $a\in \ell^\infty(X)$,
\item $E_0(aTc)=aE_0(T)c$ for all $T\in B(\ell^p(X))$ and $a,c\in \ell^\infty(X)$.
\end{itemize}
In fact, $E_0$ is given by the formula $E_0(T)(x)=(T\delta_x)(x)$ for $T\in B(\ell^p(X))$ and $x\in X$.

Suppose $X$ is amenable, and let $\phi_\mu:\ell^\infty(X)\rightarrow\mathbb{C}$ be the linear functional of norm one associated to a mean $\mu$ on $(X,d)$ that is invariant under partial translations. Consider 
\[\psi=\phi_\mu\circ E_0:B(\ell^p(X))\rightarrow\mathbb{C}.\]
Then $\psi(1)=1$ and $||\psi||=1$. To show that $\psi(aT)=\psi(Ta)$ for all $a\in B^p_u(X)$ and $T\in B(\ell^p(X))$, it suffices to consider the case where $a$ is of the form
\[ (V_t)_{xy}=\begin{cases} 1 &\text{if $x=t(y)$} \\ 0 &\text{otherwise} \end{cases}, \]
where $t$ is a partial translation on $X$. This is because the linear span of such operators is dense in $B^p_u(X)$ (cf. \cite[Section 4]{amenUR}).

A straightforward computation shows that for $V_t$ as above and $T=(T_{xy})_{x,y\in X}\in B(\ell^p(X))$,
\begin{align*}
E_0(TV_t)(x) &= \begin{cases} T_{t^{-1}(x),x} &\text{if $x\in\mathrm{ran}(t)$} \\ 0 &\text{otherwise} \end{cases} \\
E_0(V_tT)(x) &= \begin{cases} T_{x,t(x)} &\text{if $x\in\mathrm{dom}(t)$} \\ 0 &\text{otherwise} \end{cases}
\end{align*}
Since $E_0(V_tT)=E_0(TV_t)\circ t$, we have \[\psi(TV_t)=\phi_\mu(E_0(TV_t))=\phi_\mu(E_0(V_tT))=\psi(V_tT).\]

(v) $\Rightarrow$ (i): This follows from $\mathcal{O}^p_2$ being properly infinite in standard form.

(i) $\Rightarrow$ (ii) is trivial.

(ii) $\Rightarrow$ (vii): Apply Corollary \ref{Cor:PInotrace}.

\end{proof}

\begin{rem}
In \cite{amenUR}, the authors also considered F\o lner-type conditions in the context of Hilbert space operators, and showed that the statements in the theorem above are all equivalent to the uniform Roe algebra being a F\o lner $C^\ast$-algebra when $p=2$. However, the definitions involve the Hilbert-Schmidt norm (or equivalently, any of the Schatten $p$-norms) and it is not clear to us what the analogous definition should be in the context of operators on $L^p$ space when $p\neq 2$.
\end{rem}

We end this section by establishing a bijective correspondence between means on $X$ that are invariant under partial translations and normalized traces on $B^p_u(X)$.

The proof of the following lemma is exactly the same as that of the $p=2$ case so we omit it and refer the reader to \cite[Lemma 4.16]{amenUR}.

\begin{lem}
Let $(X,d)$ be a metric space with bounded geometry, and let $p\in[1,\infty)$.
Any normalized trace $\tau$ on $B^p_u(X)$ is given by $\tau=\tau|_{\ell^\infty(X)}\circ E$, where $E:B^p_u(X)\rightarrow\ell^\infty(X)$ is given by $E(T)(x)=(T\delta_x)(x)$ for $T\in B^p_u(X)$ and $x\in X$. 
\end{lem}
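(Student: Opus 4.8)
The plan is to show that the two bounded linear functionals $\tau$ and $\tau|_{\ell^\infty(X)}\circ E$ on $B^p_u(X)$ coincide. Both are indeed bounded: $\tau$ has norm one by definition, while $E$ is the restriction to $B^p_u(X)$ of the conditional expectation $E_0:B(\ell^p(X))\to\ell^\infty(X)$ from the proof of Theorem~\ref{nonamenThm}, which satisfies $\|E_0\|=1$, so the composite is bounded as well. Since the linear span of the operators $V_t$ attached to partial translations is dense in $B^p_u(X)$, and both functionals are continuous, it suffices to prove $\tau(V_t)=\tau(E(V_t))$ for a single arbitrary partial translation $t=(A,B,t)$.

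First I would split off the fixed points of $t$. Let $F=\{x\in A:t(x)=x\}$, and let $t'$ denote the restriction of $t$ to $A\setminus F$, which is a fixed-point-free partial translation. Then $V_t=1_F+V_{t'}$, and since every diagonal entry of $V_{t'}$ vanishes we get $E(V_t)(x)=(V_t)_{xx}=1_F(x)$, i.e. $E(V_t)=1_F$. Hence the desired identity reduces to the single claim that $\tau(V_{t'})=0$ whenever $t'$ is fixed-point-free.

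The crux is then a combinatorial decomposition: I would partition the domain $A'=A\setminus F$ of $t'$ into finitely many pieces $A_1,\dots,A_k$ such that $A_i\cap t'(A_i)=\emptyset$ for each $i$. To do this, consider the graph on $X$ that joins $x$ to $t'(x)$ for each $x\in A'$. Because $t'$ is injective, every vertex has at most one outgoing and at most one incoming edge, so the underlying undirected graph has maximum degree at most two, and it has no loops since $t'$ has no fixed points. A graph of maximum degree two is a disjoint union of paths and cycles and is therefore properly $3$-colorable (reducing to finite subgraphs via de Bruijn--Erd\H{o}s if desired). Taking $A_i$ to be the points of $A'$ receiving color $i$ produces a partition in which no edge joins two points of the same class; in particular $w\in A_i$ forces $t'(w)\notin A_i$, which is exactly $A_i\cap t'(A_i)=\emptyset$.

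Finally, setting $t'_i=t'|_{A_i}$ we have $V_{t'}=\sum_{i=1}^{k}V_{t'_i}$, where each $V_{t'_i}=1_{t'(A_i)}V_{t'_i}1_{A_i}$. The trace identity $\tau(ab)=\tau(ba)$ then yields $\tau(V_{t'_i})=\tau\bigl(1_{A_i}1_{t'(A_i)}V_{t'_i}\bigr)=\tau(0)=0$, since $1_{A_i}1_{t'(A_i)}=1_{A_i\cap t'(A_i)}=0$. Summing over $i$ gives $\tau(V_{t'})=0$, whence $\tau(V_t)=\tau(1_F)=\tau(E(V_t))$, completing the argument. I expect the only genuinely nontrivial step to be the $3$-coloring decomposition of the partial translation; the reductions by density and fixed-point splitting, and the final application of the trace property, are routine.
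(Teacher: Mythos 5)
Your proof is correct, and its skeleton --- reduce by density and continuity to the partial translation operators $V_t$, split off the diagonal part coming from fixed points, decompose the fixed-point-free remainder $t'$ into finitely many partial translations $t'_i$ whose domains satisfy $A_i\cap t'(A_i)=\emptyset$, and kill each $V_{t'_i}$ with the identity $\tau(ab)=\tau(ba)$ --- is exactly the skeleton of the argument the paper relies on (the paper omits the proof, deferring to the $p=2$ case in \cite[Lemma 4.16]{amenUR}, and notes that it carries over verbatim). The one step where you take a genuinely different route is the production of the sets $A_i$: you properly $3$-colour the graph of $t'$, using only that this graph has maximum degree two and no loops, whereas the referenced argument gets the decomposition from the metric, by partitioning $X$ into finitely many $2R$-separated sets via the separation lemma \cite[Lemma~17]{MR2363428} quoted at the start of Section 3, where $R\geq\sup_{x}d(x,t'(x))$; a point of such a separated set is then automatically moved off that set by $t'$. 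Your colouring argument is more elementary and more general: it shows $\tau(V_{t'})=0$ for an arbitrary fixed-point-free partial injection, with bounded geometry entering only through the density of $\mathrm{span}\{V_t\}$, and it always yields at most three pieces. The metric argument, by contrast, is uniform in the propagation --- one fixed finite family of partial translations handles all operators of propagation at most $R$ simultaneously --- which is the form of the decomposition the paper needs elsewhere (for instance in Section 3). Either route completes the proof; your reductions and the final trace computation match the referenced argument.
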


\begin{prop}\label{unique trace}
Let $(X,d)$ be a metric space with bounded geometry, and let $p\in[1,\infty)$.
\begin{enumerate}
\item Any normalized trace $\tau$ on $B^p_u(X)$ extends to some unital linear functional $\psi$ of norm one on $B(\ell^p(X))$ such that $\psi(aT)=\psi(Ta)$ for all $T\in B(\ell^p(X))$ and all $a\in B^p_u(X)$.
\item There is a bijective correspondence between means on $X$ that are invariant under partial translations and normalized traces on $B^p_u(X)$.
\end{enumerate}
\end{prop}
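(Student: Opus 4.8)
The plan is to build everything from the conditional expectation $E_0:B(\ell^p(X))\to\ell^\infty(X)$ together with the preceding lemma, which identifies any normalized trace $\tau$ on $B^p_u(X)$ with $\tau|_{\ell^\infty(X)}\circ E$, where $E=E_0|_{B^p_u(X)}$. For (1), I would set $\phi:=\tau|_{\ell^\infty(X)}$ and define $\psi:=\phi\circ E_0$ on all of $B(\ell^p(X))$. Since $E_0$ restricts to $E$ on $B^p_u(X)$, the preceding lemma gives $\psi|_{B^p_u(X)}=\phi\circ E=\tau$, so $\psi$ extends $\tau$; moreover $\psi(1)=\phi(1)=\tau(1)=1$ and $\|\psi\|\leq\|\phi\|\,\|E_0\|\leq 1$, which together force $\|\psi\|=1$. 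The real content is the trace-like identity $\psi(aT)=\psi(Ta)$. As in the proof of Theorem \ref{nonamenThm}, by density of the linear span of the $V_t$ in $B^p_u(X)$ it suffices to treat $a=V_t$, and using $E_0(V_tT)=E_0(TV_t)\circ t$ this reduces to the claim that $\phi(f\circ t)=\phi(f)$ whenever $f\in\ell^\infty(X)$ is supported on $\mathrm{ran}(t)$.

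I would prove this invariance directly from the trace property of $\tau$. Writing $M_f$ for multiplication by $f$, a short computation gives $V_{t^{-1}}M_fV_t=M_{f\circ t}$ and, since $V_tV_{t^{-1}}=1_{\mathrm{ran}(t)}$ and $f$ is supported on $\mathrm{ran}(t)$, also $V_tV_{t^{-1}}M_f=M_f$. Applying $\tau(ab)=\tau(ba)$ with $a=V_{t^{-1}}M_f$ and $b=V_t$ then yields
\[ \phi(f\circ t)=\tau(M_{f\circ t})=\tau(V_{t^{-1}}M_fV_t)=\tau(V_tV_{t^{-1}}M_f)=\tau(M_f)=\phi(f). \]
Feeding $f=E_0(TV_t)$, which is indeed supported on $\mathrm{ran}(t)$, into this identity closes the trace-like computation and proves (1).

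For (2), I would define $\Phi(\mu)=\phi_\mu\circ E$ and $\Psi(\tau)(Y)=\tau(1_Y)$ and check that these are mutually inverse bijections between means invariant under partial translations and normalized traces. That $\Phi(\mu)$ is a normalized trace is precisely the computation in the proof of [(viii)$\Rightarrow$(iv)] of Theorem \ref{nonamenThm}, restricted to $B^p_u(X)$ (with $\Phi(\mu)(1)=\phi_\mu(1)=\mu(X)=1$ and $\|\Phi(\mu)\|=1$). For $\Psi(\tau)$: finite additivity and normalization are immediate from linearity of $\tau$ and $E(1_Y)=1_Y$; the values lie in $[0,1]$ because $\phi=\tau|_{\ell^\infty(X)}$ is a state on the commutative $C^*$-algebra $\ell^\infty(X)$ (as $\|\phi\|=\phi(1)=1$), so $0\leq 1_Y\leq 1$ forces $\phi(1_Y)\in[0,1]$; and invariance under a partial translation $t$ follows from $V_{t^{-1}}V_t=1_{\mathrm{dom}(t)}$, $V_tV_{t^{-1}}=1_{\mathrm{ran}(t)}$ together with $\tau(V_{t^{-1}}V_t)=\tau(V_tV_{t^{-1}})$. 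Finally $\Psi\circ\Phi=\mathrm{id}$ is immediate since $E$ fixes $\ell^\infty(X)$, and $\Phi\circ\Psi=\mathrm{id}$ follows because $\phi_{\Psi(\tau)}$ agrees with $\tau|_{\ell^\infty(X)}$ on characteristic functions, hence on all of $\ell^\infty(X)$ by linearity and norm-density of simple functions, after which the preceding lemma gives $\Phi(\Psi(\tau))=\tau|_{\ell^\infty(X)}\circ E=\tau$.

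The main obstacle is the passage from invariance on characteristic functions (which is essentially free from $\tau(V_{t^{-1}}V_t)=\tau(V_tV_{t^{-1}})$) to the invariance $\phi(f\circ t)=\phi(f)$ for arbitrary $f$ supported on $\mathrm{ran}(t)$; this is exactly what makes the extension in (1) well behaved, and the cleanest route appears to be the conjugation identity $V_{t^{-1}}M_fV_t=M_{f\circ t}$ rather than an approximation argument. The remaining work is bookkeeping with $E_0$, $E$, and the density of finite-propagation operators.
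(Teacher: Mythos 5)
Your proof is correct and follows essentially the same route as the paper's: both define $\psi=\tau|_{\ell^\infty(X)}\circ E_0$, reduce the trace identity to the operators $V_t$ by density and the relation $E_0(V_tT)=E_0(TV_t)\circ t$, and set up the correspondence via $\mu\mapsto\phi_\mu\circ E$ and $\tau\mapsto\tau|_{\ell^\infty(X)}(1_{\cdot})$. The only difference is that you spell out two points the paper leaves implicit --- the invariance $\phi(f\circ t)=\phi(f)$ via the conjugation identity $V_{t^{-1}}M_fV_t=M_{f\circ t}$ and the trace property, and the positivity of $\tau|_{\ell^\infty(X)}$ ensuring $\mu_\tau$ takes values in $[0,1]$ --- both of which are correct and welcome.
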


\begin{proof}
For (i), we have $\tau=\tau|_{\ell^\infty(X)}\circ E$ by the lemma. 
Let $E_0:B(\ell^p(X))\rightarrow\ell^\infty(X)$ be the conditional expectation, and define $\psi=\tau|_{\ell^\infty(X)}\circ E_0$. Following the proof of (viii) $\Rightarrow$ (iv) in Theorem \ref{nonamenThm} with $\tau|_{\ell^\infty(X)}$ in place of $\phi_\mu$, one sees that $\psi$ has the desired properties.

For (ii), given a mean $\mu$ on $X$ that is invariant under partial translations, let $\phi_\mu:\ell^\infty(X)\rightarrow\mathbb{C}$ be the linear functional of norm one associated to $\mu$. Then $\tau_\mu:=\phi_\mu\circ E$ is a normalized trace on $B^p_u(X)$.
On the other hand, given a normalized trace $\tau$ on $B^p_u(X)$, the formula $\mu_\tau(Y)=\tau|_{\ell^\infty(X)}(1_Y)$ for $Y\subseteq X$ defines a mean on $X$, and it is invariant under partial translations by the trace property. 
For all $Y\subseteq X$, 
\[\mu_{\tau_\mu}(Y)=(\tau_\mu)|_{\ell^\infty(X)}(1_Y)=\phi_\mu(1_Y)=\mu(Y),\]
and by the lemma above, 
\[\tau_{\mu_\tau}=\phi_{\mu_\tau}\circ E=\tau|_{\ell^\infty(X)}\circ E=\tau.\]
Hence the maps $\mu\mapsto\tau_\mu$ and $\tau\mapsto\mu_\tau$ are inverses of each other.
\end{proof}

\begin{cor}
Let $(X,d)$ be a metric space with bounded geometry, and let $p\in [1,\infty)$. Then the following statements are equivalent: 
\begin{itemize}
\item[(i)] $X$ has a strictly
positive mean (i.e., every non-empty open subset has strictly positive measure) which is invariant under partial translations.
\item[(ii)] $|X|<\infty$.
\item[(iii)] $B^p_u(X)$ admits a unique normalized trace.
\end{itemize}
\end{cor}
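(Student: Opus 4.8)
The plan is to establish the two cycles (ii) $\Rightarrow$ (i) $\Rightarrow$ (ii) and (ii) $\Rightarrow$ (iii) $\Rightarrow$ (ii), using throughout the bijective correspondence between means on $X$ invariant under partial translations and normalized traces on $B^p_u(X)$ from Proposition \ref{unique trace}(ii), and recalling that $X$ is discrete, so every nonempty subset is open and a mean is strictly positive precisely when $\mu(\{x\})>0$ for all $x$.

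The key observation driving (i) $\Leftrightarrow$ (ii) is that for \emph{any} two points $x,y\in X$ the singleton map $\{x\}\to\{y\}$ is a partial translation, since its graph $\{(x,y)\}$ is trivially controlled. Hence any invariant mean $\mu$ assigns a common value $c:=\mu(\{x\})$ to all singletons. For (i) $\Rightarrow$ (ii): if $\mu$ is strictly positive then $c>0$, and were $X$ infinite we could pick $N>1/c$ distinct points $x_1,\dots,x_N$, whereupon finite additivity and monotonicity give $Nc=\mu(\{x_1,\dots,x_N\})\le\mu(X)=1$, a contradiction; thus $X$ is finite. Conversely, (ii) $\Rightarrow$ (i): if $X$ is finite then the normalized counting measure $\mu(A)=|A|/|X|$ is strictly positive, and it is invariant because a partial translation is in particular a bijection between its domain and range, which therefore have equal cardinality.

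For (ii) $\Rightarrow$ (iii) I would note that a finite $X$ with $|X|=n$ has a \emph{unique} invariant mean: the singleton-equality above forces $\mu(\{x\})=1/n$ for every $x$, which determines $\mu$ completely, so Proposition \ref{unique trace} gives a unique normalized trace. (Equivalently $B^p_u(X)=M^p_n$, and the standard normalized trace $a\mapsto\frac1n\sum_i a_{ii}$ is the only algebraic trace normalized at the identity, of functional norm one since $|a_{ii}|\le\|a\|$.) The substantive implication is (iii) $\Rightarrow$ (ii), which I would prove contrapositively: if $X$ is infinite, then $B^p_u(X)$ does not admit exactly one normalized trace. If $X$ is non-amenable this is immediate, since by Theorem \ref{nonamenThm} it has \emph{no} normalized trace at all. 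If $X$ is amenable and infinite, the goal is to produce two distinct invariant means, which by Proposition \ref{unique trace} yield two distinct traces. The construction is: first build a sequence of pairwise disjoint finite sets $(F_k)$ that are $(R_k,\varepsilon_k)$-F\o lner with $R_k\to\infty$ and $\varepsilon_k\to 0$; then, for an ultrafilter $\omega$ on $\mathbb{N}$, the formula $\mu_\omega(A)=\lim_\omega |A\cap F_k|/|F_k|$ defines a mean, and the standard estimate $\big||P\cap F_k|-|Q\cap F_k|\big|\le|\partial_{R_k}F_k|$ for a partial translation $(P,Q,t)$ of displacement $S$ (valid once $R_k\ge S$, together with $\varepsilon_k\to 0$) shows $\mu_\omega$ is invariant under partial translations. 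Taking $A=\bigsqcup_{k\text{ even}}F_k$ and comparing ultrafilters concentrated on the even, respectively odd, indices gives $\mu_\omega(A)=1$ and $\mu_{\omega'}(A)=0$, so the two means are distinct and (iii) fails.

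The main obstacle is exactly the existence of a pairwise disjoint F\o lner sequence in an infinite amenable bounded geometry space. I would obtain it by inductive exhaustion: having chosen $F_1,\dots,F_n$ with finite union $S_n$, the cofinite subspace $X\setminus S_n$ is coarsely dense in $X$ (using bounded geometry and infiniteness), hence again amenable of bounded geometry since amenability is a coarse invariant \cite[Proposition 3.D.33]{CornHarpe}, and one extracts $F_{n+1}$ from it. The delicate point is transferring the F\o lner condition back to $X$, because a F\o lner set of a subspace may hug the removed region. I would handle this by cases: when $X$ admits arbitrarily large F\o lner sets, one trims a single large F\o lner set by the finitely many points lying within $R$ of $S_n$, which changes the boundary ratio negligibly; whereas if the F\o lner set sizes stay bounded, then for large $R$ a small $(R,1/R)$-F\o lner set has empty $R$-boundary, hence is a union of finite $R$-connected components that is $R$-separated from the rest, and infinitely many pairwise disjoint such clusters must occur. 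Once the disjoint sequence is in hand, the remaining steps are the routine F\o lner estimates and ultrafilter bookkeeping sketched above.
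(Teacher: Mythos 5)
Your proposal is correct and follows essentially the same route as the paper's proof: singleton partial translations force every invariant mean on an infinite space to vanish on points, the finite case reduces to the unique normalized trace on $M_n(\mathbb{C})$ (equivalently the unique invariant mean) via Proposition \ref{unique trace}, and (iii) $\Rightarrow$ (ii) is proved by building a pairwise disjoint F\o lner sequence and producing two distinct ultrafilter-limit invariant means, contradicting uniqueness. The only real difference is that you identify and patch, with your two-case argument, the step the paper leaves implicit, namely that a F\o lner set of the cofinite subspace $Y=X\setminus\bigcup_{i\le n}F_i$ must be arranged to be F\o lner in $X$ itself and not merely in $Y$.
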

\begin{proof}
(i) $\Rightarrow$ (ii): Let $\mu$ be an invariant mean on $X$ and assume that $|X|=\infty$. For every non-empty finite subset $F$ of $X$, we fix $x_0$ in $F$. It is clear that there is a partial translation from $\{x\}$ to $\{x_0\}$ for each $x\in F$. Hence,
\begin{align*}
1\geq \mu(F)=\sum_{x\in F}\mu(\{x\})=\sum_{x\in F}\mu(\{x_0\})=|F|\mu(\{x_0\}).
\end{align*}
Since $F$ can be arbitrary large, it forces $\mu(\{x_0\})=0$. Hence, $\mu$ is not strictly positive.\\
(ii) $\Rightarrow$ (i): If $|X|=n$, we just consider $\mu(A)=\frac{1}{n}|A|$ for $A\subseteq X$.\\
(ii) $\Rightarrow$ (iii): If $|X|=n$, then $B^p_u(X)=M_n(\mathbb{C})$. Let $\tau$ be any normalized trace on $M_n(\mathbb{C})$ and $e_{ij}$ be the matrix unit for $i,j\in \{1,\ldots,n\}$. It is easy to see that $\tau(e_{ii})=\tau(e_{11})=\frac{1}{n}$ and $\tau(e_{ij})=0$ for $i\neq j$. Hence, $\tau$ is the standard normalized trace on $M_n(\mathbb{C})$.\\
(iii) $\Rightarrow$ (ii): From Theorem~\ref{nonamenThm} and Proposition~\ref{unique trace} we know that $X$ is amenable and admits a unique mean that is invariant under partial translations. Suppose that $|X|=\infty$. Then we can inductively choose a F\o lner sequence $(F_n)_{n\in \mathbb{N}}$ in $X$ consisting of disjoint subsets. Indeed, if $F_1,\ldots, F_n$ have been chosen, we consider the space $Y=X\backslash (\bigcup_{i=1}^n F_i)$. Since $\bigcup_{i=1}^n F_i$ is finite, $Y$ is an infinite amenable subspace which is coarsely equivalent to $X$. Hence there is $F_{n+1}\subset Y$ with the right property.

Now we take any free ultrafilter $\omega$ on $\mathbb{N}$ and the formula
\begin{align*}
\tau_\omega(f)=\lim_{\omega}\frac{1}{|F_n|}\sum_{x\in F_n}f(x)
\end{align*}
defines an invariant mean on $\ell^\infty(X)$ by the F\o lner property. Let $\omega_1$ and $\omega_2$ be two different ultrafilters on $\mathbb{N}$. Then there is a subset $A$ of $\mathbb{N}$ such that $A\in \omega_1$ and $A \notin \omega_2$. It follows that $\tau_{\omega_1}(f)=1$ and $\tau_{\omega_2}(f)=0$ for $f=\sum_{n\in A}1_{F_n}$ in $\ell^\infty(X)$. This is a contradiction to the uniqueness.
\end{proof}

\section{$K$-theory of properly infinite $\ell^p$ uniform Roe algebras}
In this section, we study the homomorphism $K_0(\ell^\infty(X))\rightarrow K_0(B^p_u(X))$ induced by the canonical diagonal inclusion $\ell^\infty(X)\hookrightarrow B^p_u(X)$. When the underlying metric space $X$ has asymptotic dimension at most one, we show that this homomorphism is always surjective. As a consequence, if $X$ is a non-amenable metric space with asymptotic dimension at most one then $K_0(B^p_u(X))=0$ for all $p\in[1,\infty)$.

\begin{lem}\cite[Lemma~17]{MR2363428}
Let $(X,d)$ be a metric space with bounded geometry. Then for any $R>0$, the space $X$ can be written as a finite disjoint union of $R$-separated subsets, i.e., $X=\coprod_{i=1}^nX_i$ and for each $i$ we have $d(x,y)\geq R$ whenever $x,y$ are distinct points in $X_i$.
\end{lem}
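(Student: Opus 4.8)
The plan is to reduce the statement to a graph-coloring problem and then solve that problem greedily. I would define a graph $G$ with vertex set $X$ by declaring two distinct points $x,y$ to be adjacent exactly when $d(x,y)<R$. With this convention a subset $Y\subseteq X$ is independent in $G$ if and only if any two distinct points of $Y$ are at distance $\geq R$, which is precisely the $R$-separatedness required in the statement. Hence a partition $X=\coprod_{i=1}^{n}X_i$ into $R$-separated pieces is nothing other than a proper $n$-coloring of $G$, and the whole lemma becomes the assertion that $G$ has finite chromatic number.

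The bounded geometry hypothesis is exactly what controls the combinatorics of $G$. All neighbors of a point $x$ lie in the closed ball $B_R(x)$, which by assumption satisfies $|B_R(x)|\leq N_R$; discarding $x$ itself shows that every vertex of $G$ has degree at most $N_R-1$. Since a graph of maximum degree at most $\Delta$ is properly colorable with $\Delta+1$ colors, I would aim to produce a proper coloring of $G$ using the palette $\{1,\dots,N_R\}$, giving $n\leq N_R$.

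To carry this out I would use that a bounded geometry space is countable, so I may fix an enumeration $X=\{x_1,x_2,\dots\}$ and color greedily: having colored $x_1,\dots,x_{m-1}$, the vertex $x_m$ has at most $N_R-1$ already-colored neighbors, so at least one color in $\{1,\dots,N_R\}$ is unused among them and may be assigned to $x_m$. This yields a proper coloring of all of $G$ with at most $N_R$ colors, and setting $X_i$ to be the set of points receiving color $i$ produces the desired finite decomposition into $R$-separated subsets.

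The only genuine subtlety, and therefore the step I would treat most carefully, is that $X$ may be infinite, so the coloring cannot be obtained by a finite induction but must be run over the fixed enumeration as above (where the uniform degree bound guarantees a free color at every stage). An equally clean alternative is to invoke the De Bruijn--Erd\H{o}s theorem: every finite subgraph of $G$ is $N_R$-colorable by the degree bound, and compactness then upgrades this to an $N_R$-coloring of $G$ itself. Either route is routine once the uniform bound $\Delta\leq N_R-1$ coming from bounded geometry has been established.
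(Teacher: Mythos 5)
Your proof is correct and complete. One thing to be aware of: the paper does not prove this lemma at all --- it is imported verbatim, with citation, from Lemma~17 of the reference \cite{MR2363428} --- so there is no internal argument to compare against, and a self-contained proof like yours is genuinely additional content. Your reduction to graph coloring is sound: with adjacency defined by $d(x,y)<R$, independent sets are exactly the $R$-separated sets in the sense of the statement (pairwise distance $\geq R$), the closed-ball bound $|B_R(x)|\leq N_R$ gives maximum degree at most $N_R-1$, and the greedy coloring along an enumeration (legitimate, since bounded geometry forces $X$ to be countable, as the paper itself notes) produces a proper coloring with at most $N_R$ colors; properness holds because every edge is checked when its later endpoint is colored. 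For comparison, the argument usually given for this fact in the literature is the same idea in slightly different clothing: one takes a maximal $R$-separated subset $X_1\subseteq X$ (Zorn's lemma), then a maximal $R$-separated subset $X_2$ of $X\setminus X_1$, and so on; if some point $x$ survived $n$ rounds, maximality would produce a point $y_i\in X_i$ with $d(x,y_i)<R$ for each $i\leq n$, so that $x,y_1,\dots,y_n$ are $n+1$ distinct points of $B_R(x)$, forcing $n\leq N_R-1$ and hence termination in at most $N_R$ steps. Your greedy version trades Zorn's lemma for the countability of $X$, and your De Bruijn--Erd\H{o}s variant trades it back for compactness; all three routes yield the same bound $n\leq N_R$ and rest on the same use of bounded geometry.
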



Using the lemma above, given $R>0$, we can find a finite set $\mathcal{T}$ of partial translations such that for any pair of points $x,y\in X$ with $d(x,y)\leq R$, there exists $t\in\mathcal{T}$ such that $t(y)=x$. These partial translations can be described in the following way:

Fix $R>0$, and write $X$ as a disjoint union $X=\coprod_{i=1}^nX_i$ of $S$-separated sets with $S>2R$. Let $t_{ij}$ be the map that sends $y\in X_j$ to $x\in X_i$ if $d(x,y)\leq R$. For each $x\in X_i$, there is at most one $y\in X_j$ with $d(x,y)\leq R$ since $X_j$ is $S$-separated with $S>2R$. Conversely, for each $y\in X_j$ there is at most one $x\in X_i$ with $d(x,y)\leq R$. Hence $t_{ij}$ is a partial translation. Moreover, observe that if $T_{ij}\in B^p_u(X)$ is the partial isometry associated to $t_{ij}$, then $T_{ji}$ is the transpose of $T_{ij}$.

We will use these partial translations in the proof of the following lemma, which generalizes Lemma 2.5 and Lemma 2.6 in \cite{MatuiRordam} dealing with the case where $X$ is a discrete group and $p=2$.

\begin{lem}
Let $(X,d)$ be a metric space with bounded geometry, let $A$ and $B$ be nonempty subspaces of $X$, and let $p\in[1,\infty)$. Then $1_A$ is in the closed two-sided ideal in $B^p_u(X)$ generated by $1_B$ if and only if there exists $R<\infty$ such that $d(x,B)\leq R$ for all $x\in A$.
\end{lem}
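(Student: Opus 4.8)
The plan is to prove both implications directly, letting $J$ denote the closed two-sided ideal of $B^p_u(X)$ generated by $1_B$, so that $J$ is the norm-closure of the algebraic ideal $\{\sum_{i=1}^k a_i 1_B c_i : k\in\mathbb{N},\ a_i,c_i\in B^p_u(X)\}$ (recall that $B^p_u(X)$ is unital). For the necessity direction I would argue by the contrapositive together with a propagation (row-support) estimate. Suppose $\sup_{x\in A} d(x,B)=\infty$. First I would reduce to approximants with finite propagation: since finite-propagation operators are dense and multiplication is norm-continuous, any element of $J$ can be approximated in norm by a finite sum $T=\sum_{i=1}^k a_i 1_B c_i$ in which each $a_i$ has finite propagation, it being enough to perturb the $a_i$ and leave the $c_i$ untouched. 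The key observation is that, because $1_B$ is diagonal, $(a_i 1_B c_i)_{xy}=\sum_{b\in B}(a_i)_{xb}(c_i)_{by}$, so the entire $x$-row of $a_i 1_B c_i$ vanishes as soon as $d(x,B)>\mathrm{prop}(a_i)$; hence the $x$-row of $T$ vanishes whenever $d(x,B)>R_0:=\max_i \mathrm{prop}(a_i)$. Now, if $\|1_A-T\|<1$, I would choose $x_0\in A$ with $d(x_0,B)>R_0$, so that $T_{x_0x_0}=0$, and then evaluate on $\delta_{x_0}$: the $x_0$-coordinate of $(1_A-T)\delta_{x_0}$ equals $1$, whence $\|(1_A-T)\delta_{x_0}\|_p\geq 1$, contradicting $\|(1_A-T)\delta_{x_0}\|_p\leq\|1_A-T\|<1$. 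Thus $1_A\notin J$.

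For the sufficiency direction I would use the finite family $\mathcal{T}$ of partial translations constructed just before the lemma: fixing $R$ with $d(x,B)\leq R$ for all $x\in A$, there is a finite set $\mathcal{T}$ of partial translations such that every pair $x,y$ with $d(x,y)\leq R$ satisfies $x=t(y)$ for some $t\in\mathcal{T}$. Consequently, for each $x\in A$, choosing $b\in B$ with $d(x,b)\leq R$ yields $t\in\mathcal{T}$ with $t(b)=x$, so that $A\subseteq\bigcup_{t\in\mathcal{T}}t(B\cap\mathrm{dom}(t))$. A direct computation with the operators $V_t$, using that the transpose $V_t^{\mathrm{T}}$ (the operator of the inverse partial translation) again lies in $B^p_u(X)$, shows $V_t\,1_B\,V_t^{\mathrm{T}}=1_{t(B\cap\mathrm{dom}(t))}$, which therefore belongs to $J$. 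Writing $C_t:=t(B\cap\mathrm{dom}(t))$ and using the identity $1_{C\cup C'}=1_C+1_{C'}(1-1_C)$ together with the fact that $J$ is an ideal, I would assemble $1_{\bigcup_t C_t}\in J$ from the finitely many $1_{C_t}$. Finally, since $A\subseteq\bigcup_t C_t$, I obtain $1_A=1_A\,1_{\bigcup_t C_t}\in J$.

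The genuinely substantive points are few, and the main thing to get right lies in the sufficiency direction: recognizing that conjugating $1_B$ by a partial-translation operator and its transpose produces exactly the characteristic function of the translated set, and that bounded geometry (which keeps $\mathcal{T}$ finite and each $V_t$, $V_t^{\mathrm{T}}$ bounded on $\ell^p$) lets one cover $A$ by finitely many such translated copies of $B$. The remaining care is bookkeeping: assembling a single characteristic function $1_{\bigcup_t C_t}$ in $J$ from the pieces $1_{C_t}$, and, on the necessity side, carrying out the reduction to finite-propagation approximants so that the row-support estimate applies. I do not expect any analytic obstruction specific to $p\neq 2$ beyond systematically replacing Hilbert-space adjoints by transposes.
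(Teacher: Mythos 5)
Your proof is correct and takes essentially the same route as the paper's: for necessity, both arguments rest on the observation that every entry of the $x$-row of $\sum_i a_i 1_B c_i$ vanishes once $d(x,B)$ exceeds the propagations of the $a_i$ (the paper packages this via the norm-one conditional expectation onto $\ell^\infty(X)$ instead of evaluating at $\delta_{x_0}$), and for sufficiency both use the same finite family of partial translations coming from a $2R$-separated decomposition to conjugate $1_B$ into diagonal idempotents covering $A$. The only differences are bookkeeping: the paper sums all the conjugates $T_{ji}1_BT_{ij}$ and corrects the resulting diagonal by a bounded function $f$, whereas you assemble $1_{\bigcup_t C_t}$ from the pieces $1_{C_t}$ via the union identity $1_{C\cup C'}=1_C+1_{C'}(1-1_C)$ --- both are fine.
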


\begin{proof}
($\Rightarrow$): Suppose that $1_A$ belongs to the closed two-sided ideal generated by $1_B$. Then there exist finite propagation operators $T_1,\ldots,T_n,S_1,\ldots,S_n\in B^p_u(X)$ such that $||1_A-\sum_{j=1}^n T_j1_B S_j||<1$. In particular, \[||1_A-\sum_{j=1}^n E(T_j1_BS_j)||<1,\] where $E:B^p_u(X)\rightarrow\ell^\infty(X)$ is the conditional expectation. 

Now note that if $T,S\in B^p_u(X)$ have finite propagation, then $E(T1_BS)(x)=\sum_{z\in B}T_{x,z}S_{z,x}=0$ if $d(x,B)>\mathrm{prop}(T)$. Thus for all $x\in A$, we have
\[d(x,B)\leq\max_{1\leq j\leq n}\{\mathrm{prop}(T_j)\}.\]

($\Leftarrow$): Suppose that there exists $R<\infty$ such that $d(x,B)\leq R$ for all $x\in A$. Using this $R$ and writing $X$ as a disjoint union $X=\coprod_{i=1}^nX_i$ of $S$-separated sets with $S>2R$, we get a finite set of partial translations $t_{ij}$ as described above. Let us consider the associated spatial partial isometries $T_{ij}$ in $B^p_u(X)$. We claim that $1_A=f1_A\sum_{i,j=1}^n T_{ji}1_BT_{ij}$ for some $f\in\ell^\infty(X)\subset B^p_u(X)$. It then follows that $1_A$ is in the closed two-sided ideal in $B^p_u(X)$ generated by $1_B$.

To prove the claim, first observe that for $x,y\in X$, we have \[ (T_{ji}1_BT_{ij})_{xy}=\sum_{z\in B}(T_{ji})_{xz}(T_{ij})_{zy}=\begin{cases} \sum_{z\in B}(T_{ij})_{zy} &\text{if}\; x=y \\ 0 &\text{if}\; x\neq y \end{cases}, \] where we have used the fact that $T_{ji}$ is the transpose of $T_{ij}$, and that all entries in $T_{ij}$ are either 0 or 1 with at most one nonzero entry in each row. In particular, $T_{ji}1_BT_{ij}\in \ell^\infty(X)\subset B^p_u(X)$ and \[ (1_AT_{ji}1_BT_{ij})_{yy}=\begin{cases} \sum_{z\in B}(T_{ij})_{zy} &\text{if}\; y\in A\cap X_j \\ 0 &\text{if}\; y\notin A\cap X_j \end{cases}. \]
Note that $\sum_{z\in B}(T_{ij})_{zy}$ is either 0 or 1 for each $y\in A$. Also, for each $y\in A\cap X_j$ and $\varepsilon>0$, there exists $z\in B$ such that $d(y,z)< R+\varepsilon$, and $z\in X_i$ for some $i$, so we have $(\sum_{i,j=1}^n 1_AT_{ji}1_BT_{ij})_{yy}\in\{1,2,\ldots,n\}$ for each $y\in A$. Hence $1_A=f1_A\sum_{i,j=1}^n T_{ji}1_BT_{ij}$ for some $f\in\ell^\infty(X)\subset B^p_u(X)$.
\end{proof}

Recall that an idempotent in a Banach algebra $A$ is \emph{full} if it is not contained in any proper, closed, two-sided ideal in $A$.

\begin{lem} (see \cite[Lemma 5.3]{lowdimUR} for $p=2$ case) \label{Lem:diagidem1}
Let $p\in[1,\infty)$. If $(X,d)$ is a non-amenable metric space with bounded geometry, then $[e]_0=[0]_0$ in $K_0(B^p_u(X))$ for every idempotent $e\in\ell^\infty(X)$ that is full in $B^p_u(X)$.
\end{lem}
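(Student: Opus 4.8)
The plan is to reduce the statement to Theorem~\ref{nonamenThm} applied to the subspace cut out by $e$. Since $e$ is an idempotent in the commutative algebra $\ell^\infty(X)$, it satisfies $e(x)^2=e(x)$ pointwise, so $e=1_A$ is the characteristic function of the subset $A=\{x\in X:e(x)=1\}$. The first step is to interpret fullness geometrically: $e=1_A$ is full in $B^p_u(X)$ exactly when the closed two-sided ideal it generates is all of $B^p_u(X)$, equivalently when $1=1_X$ lies in that ideal. By the preceding lemma, applied with $X$ in the role of its ``$A$'' and with our $A$ in the role of its ``$B$'', this happens if and only if there exists $R<\infty$ with $d(x,A)\le R$ for all $x\in X$, i.e. $A$ is coarsely dense in $X$. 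In particular the inclusion $A\hookrightarrow X$ is a coarse equivalence, a coarse inverse being any map $\phi\colon X\to A$ with $d(x,\phi(x))\le R$, and $A$ inherits bounded geometry from $X$.

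Next I would pass to the corner associated to $1_A$. The key point is the identification $1_AB^p_u(X)1_A\cong B^p_u(A)$, where $A$ carries the restricted metric and $\ell^p(A)$ is identified with $1_A\ell^p(X)$. Indeed, compressing a finite-propagation operator on $\ell^p(X)$ by $1_A$ produces a finite-propagation operator on $\ell^p(A)$ of no larger norm, while extending a finite-propagation operator on $\ell^p(A)$ by zero on $\ell^p(X\setminus A)$ produces a finite-propagation operator on $\ell^p(X)$ of the same norm; because $\ell^p(X)=\ell^p(A)\oplus_p\ell^p(X\setminus A)$, these two operations are mutually inverse isometries between the dense finite-propagation subalgebras and hence extend to an isometric isomorphism of the completions.

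With the corner identified, I would feed in non-amenability. Since amenability is a coarse invariant \cite[Proposition 3.D.33]{CornHarpe} and $A$ is coarsely equivalent to the non-amenable space $X$, the space $A$ is itself non-amenable. Applying Theorem~\ref{nonamenThm} (the implication (iv)$\Rightarrow$(vi)) to $A$ gives $[1]_0=[0]_0$ in $K_0(B^p_u(A))$, where $1$ is the unit of $B^p_u(A)$, which under the identification above is precisely $1_A$. Finally I would transport this class along the corner inclusion $\iota\colon 1_AB^p_u(X)1_A\hookrightarrow B^p_u(X)$: by functoriality of $K_0$ for the (non-unital) homomorphism $\iota$, the class $[1_A]_0\in K_0(B^p_u(A))$ maps to the class $[1_A]_0=[e]_0$ of the idempotent $e$ in $K_0(B^p_u(X))$. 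As the former is zero, we conclude $[e]_0=[0]_0$.

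The routine ingredients are the corner identification and the fact that $K_0$ sends idempotent classes to idempotent classes under algebra homomorphisms; the only conceptual step is recognizing that fullness of $e$ is exactly coarse density of $A$, which is what makes the non-amenability of $X$ descend to $A$ and unlocks Theorem~\ref{nonamenThm} for the corner. I expect the one point requiring genuine care to be the isometric identification $1_AB^p_u(X)1_A\cong B^p_u(A)$ (checking that compression and zero-extension are norm-preserving and mutually inverse on finite-propagation operators); everything after that follows formally.
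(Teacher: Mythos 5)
Your proposal is correct and follows essentially the same route as the paper's own proof: fullness of $e=1_A$ is converted into coarse density of $A$ via the preceding ideal lemma, non-amenability descends to $A$ by coarse invariance, Theorem~\ref{nonamenThm} gives $[1_A]_0=[0]_0$ in $K_0(B^p_u(A))$, and the class is pushed forward along the inclusion $B^p_u(A)\hookrightarrow B^p_u(X)$. The only difference is presentational: the paper skips your explicit corner identification $1_AB^p_u(X)1_A\cong B^p_u(A)$ and works directly with the zero-extension inclusion, which suffices.
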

\begin{proof}
Let $e=1_A$ be an idempotent in $\ell^\infty(X)$ that is full in $B^p_u(X)$. Then $e$ and $1_X$ generate the same closed two-sided ideal in $B^p_u(X)$, so there exists $R>0$ such that $d(x,A)\leq R$ for all $x\in X$ by the previous lemma. In particular, the inclusion $A\hookrightarrow X$ is a quasi-isometry. Thus $A$ is non-amenable, so we have $[e]_0=[0]_0$ in $K_0(B^p_u(A))$ by Theorem \ref{nonamenThm}. The inclusion map $B^p_u(A)\hookrightarrow B^p_u(X)$ maps $e$ to the diagonal idempotent $1_A$ in $B^p_u(X)$ so $[e]_0=[0]_0$ in $K_0(B^p_u(X))$.
\end{proof}

\begin{lem} (see \cite[Lemma 5.4]{lowdimUR} for $p=2$ case)  \label{Lem:diagidem2}
If $(X,d)$ is a non-amenable metric space with bounded geometry, then $[e]_0=[0]_0$ in $K_0(B^p_u(X))$ for every idempotent $e\in\ell^\infty(X)$ and every $p\in[1,\infty)$. 

Hence, the homomorphism $K_0(\ell^\infty(X))\rightarrow K_0(B^p_u(X))$ induced by the diagonal inclusion $\ell^\infty(X)\hookrightarrow B^p_u(X)$ is the zero map for all $p\in[1,\infty)$.
\end{lem}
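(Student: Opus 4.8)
The plan is to prove the stronger pointwise statement $[1_A]_0=[0]_0$ for \emph{every} subset $A\subseteq X$ (the idempotents of $\ell^\infty(X)$ are exactly the characteristic functions $1_A$), by means of an Eilenberg swindle carried out with finite-propagation spatial partial isometries. Once this is known, the displayed conclusion is immediate: writing $\ell^\infty(X)\cong C(\beta X)$ with $\beta X$ a Stone space, $K_0(\ell^\infty(X))$ is generated by the classes $[1_A]_0$ of diagonal idempotents, so the induced homomorphism annihilates a generating set and is therefore the zero map.

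To set up the swindle I would first invoke non-amenability. By Theorem \ref{nonamenThm}, $X$ admits a paradoxical decomposition $X=X_+\sqcup X_-$ with partial translations $t_\pm:X\to X_\pm$; let $V_{t_\pm}\in B^p_u(X)$ be the associated finite-propagation spatial partial isometries, whose reverses correspond to $t_\pm^{-1}$. Given $A$, I would first move it into the ``fundamental domain'' $X_-$ of the shift: since $t_-|_A:A\to t_-(A)\subseteq X_-$ is a partial translation, $[1_A]_0=[1_{t_-(A)}]_0$, so it suffices to treat $A_0:=t_-(A)\subseteq X_-$. I would then spread $A_0$ out by iterating $t_+$, setting $A_i:=t_+^i(A_0)$ for $i\ge 0$. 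Because $t_+$ is injective with range $X_+$, the sets $t_+^i(X)$ decrease and $A_i\subseteq t_+^i(X_-)=t_+^i(X)\setminus t_+^{i+1}(X)$; these ``annuli'' are pairwise disjoint, hence so are the $A_i$. Moreover $t_+$ restricts to a controlled bijection $A_i\to A_{i+1}$ with $\mathrm{prop}\le\mathrm{prop}(t_+)<\infty$.

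The swindle then runs as follows. Let $\sigma$ be the restriction of $t_+$ to $\bigsqcup_{i\ge 0}A_i$; this is a partial translation with domain $\bigsqcup_{i\ge 0}A_i$ and range $\bigsqcup_{i\ge 1}A_i$, so $V_\sigma\in B^p_u(X)$ satisfies $V_{\sigma^{-1}}V_\sigma=1_{\sqcup_{i\ge 0}A_i}$ and $V_\sigma V_{\sigma^{-1}}=1_{\sqcup_{i\ge 1}A_i}$. Hence $1_{\sqcup_{i\ge 0}A_i}\sim 1_{\sqcup_{i\ge 1}A_i}$ and $[1_{\sqcup_{i\ge 0}A_i}]_0=[1_{\sqcup_{i\ge 1}A_i}]_0$ in $K_0(B^p_u(X))$. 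Since $A_0$ is disjoint from $\bigsqcup_{i\ge 1}A_i$, additivity of $K_0$ on orthogonal idempotents gives $[1_{A_0}]_0+[1_{\sqcup_{i\ge 1}A_i}]_0=[1_{\sqcup_{i\ge 1}A_i}]_0$, whence $[1_{A_0}]_0=[0]_0$ and therefore $[1_A]_0=[0]_0$.

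The hard part is the bookkeeping that makes the swindle legitimate, namely producing infinitely many pairwise disjoint copies of $A$ together with a \emph{single} finite-propagation operator shifting the $i$-th copy onto the $(i+1)$-th. This is exactly where non-amenability is essential (an amenable space has no room for such a tower), and it is the reason for the preliminary step of pushing $A$ into $X_-$: the annuli $t_+^i(X_-)$ form a disjoint fundamental-domain tower for the injection $t_+$, and placing the base copy $A_0$ inside $X_-$ forces the iterates into distinct annuli. I should also double-check the two standard $K_0$ facts I rely on in the Banach-algebra setting — invariance under algebraic equivalence of idempotents and additivity on orthogonal idempotents — and I note that this argument in fact reproves Lemma \ref{Lem:diagidem1} as the special case where $1_A$ is full.
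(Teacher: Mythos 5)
Your proof is correct, but it takes a genuinely different route from the paper's. The paper deduces the lemma from two auxiliary results: the ideal-membership criterion (characterizing when $1_A$ lies in the closed two-sided ideal generated by $1_B$, which shows a full diagonal idempotent is supported on a coarsely dense, hence non-amenable, subspace) and Lemma \ref{Lem:diagidem1} for full diagonal idempotents; it then splits a general $e=1_A$ along the paradoxical decomposition as $1_{A\cap X_1}+1_{A\cap X_2}$, observes that $1_X-1_{A\cap X_i}$ dominates $1_{X_j}$ (for $j\neq i$) and is therefore full, and combines $[1_X-1_{A\cap X_i}]_0=[0]_0$ with $[1_X]_0=[0]_0$ from Theorem \ref{nonamenThm} to conclude. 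Your shift-tower swindle bypasses all of this: the only geometric input is the paradoxical decomposition itself (note that non-amenability implies the existence of a paradoxical decomposition by the quoted theorem of \cite{AmenCoa}, not literally by Theorem \ref{nonamenThm}, though this is cosmetic), and the key verifications — pairwise disjointness of the annuli $t_+^i(X_-)$, the fact that the single restriction $\sigma=t_+|_{\bigsqcup_{i\ge 0}A_i}$ is a partial translation so that $V_\sigma$ lies in $B^p_u(X)$ with finite propagation, and cancellation in the group $K_0$ applied to $[1_{A_0}]_0+[1_{\bigsqcup_{i\ge1}A_i}]_0=[1_{\bigsqcup_{i\ge1}A_i}]_0$ — are all sound; the preliminary step of pushing $A$ into $X_-$ is indeed necessary (otherwise $t_+$ could fix points of $A$ and the iterates need not be disjoint), and you handle it correctly. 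What your approach buys is economy and a slightly stronger conclusion: it re-proves Lemma \ref{Lem:diagidem1} as a special case, needs neither the ideal lemma nor the implication (iv)$\Rightarrow$(vi) of Theorem \ref{nonamenThm}, and exhibits an explicit algebraic equivalence $1_{A_0}+1_B\sim 1_B$ (with $B=\bigsqcup_{i\ge1}A_i$) rather than only an equality of $K_0$ classes. What the paper's route buys is the intermediate lemmas themselves — the ideal criterion generalizing Matui--R{\o}rdam and the fullness lemma — which have independent interest. Your handling of the matrix-level statement, via zero-dimensionality of $\beta X$ so that $K_0(\ell^\infty(X))$ is generated by classes of idempotents in $\ell^\infty(X)$ itself, coincides with the paper's.
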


\begin{proof}
By assumption, $X$ admits a paradoxical decomposition $X=X_1\sqcup X_2$ with partial translations $t_i:X\rightarrow X_i$ for $i=1,2$. These partial translations give rise to $S_i\in B^p_u(X)$, and their inverses give rise to $T_i\in B^p_u(X)$ such that $T_iS_i=1_X$ and $S_iT_i=1_{X_i}$ for $i=1,2$. Since $1_X=T_i 1_{X_i} S_i$, we see that $1_{X_1}$ and $1_{X_2}$ are full idempotents in $B^p_u(X)$.

Let $e=1_A$ be an idempotent in $\ell^\infty(X)$, where $A$ is a subspace of $X$. Then $e=1_{A\cap X_1}+1_{A\cap X_2}$.
Since $(1_X-1_{A\cap X_2})1_{X_1}=1_{X_1}=1_{X_1}(1_X-1_{A\cap X_2})$ and $(1_X-1_{A\cap X_1})1_{X_2}=1_{X_2}=1_{X_2}(1_X-1_{A\cap X_1})$, we conclude that $1_X-1_{A\cap X_1}$ and $1_X-1_{A\cap X_2}$ are full idempotents in $B^p_u(X)$. By Lemma \ref{Lem:diagidem1}, $[1_X-1_{A\cap X_1}]_0=[1_X-1_{A\cap X_2}]_0=[0]_0$ in $K_0(B^p_u(X))$. Since $X$ is non-amenable, we also have $[1_X]_0=[0]_0$ in $K_0(B^p_u(X))$ by Theorem \ref{nonamenThm}. Hence $[e]_0=[1_{A\cap X_1}]_0+[1_{A\cap X_2}]_0=[0]_0$ in $K_0(B^p_u(X))$.

To show that the homomorphism $K_0(\ell^\infty(X))\rightarrow K_0(B^p_u(X))$ induced by the diagonal inclusion is zero, note that there is a group isomorphism
\[ \dim:K_0(C(\beta X))\rightarrow C(\beta X,\mathbb{Z}) \]
satisfying $\dim([e]_0)(x)=Tr(e(x))$ for every $x\in\beta X$, $e$ an idempotent in $M_n(C(\beta X))$ for some $n\in\mathbb{N}$, and $Tr$ denoting the standard non-normalized trace on $M_n(\mathbb{C})$. In particular, every idempotent in $M_n(\ell^\infty(X))$ is equivalent to a direct sum of idempotents in $\ell^\infty(X)$. Since every idempotent in $\ell^\infty(X)$ has the trivial $K_0$ class in $K_0(B^p_u(X))$, so does every idempotent in $M_n(\ell^\infty(X))$.
\end{proof}

The main result of this section is the following theorem:
\begin{thm} (see \cite[Theorem 5.2]{lowdimUR} for $p=2$ case) \label{thm:lowdim}
Let $(X,d)$ be a metric space with bounded geometry. If the asymptotic dimension of $X$ is at most one, then the homomorphism $K_0(\ell^\infty(X))\rightarrow K_0(B^p_u(X))$ induced by the canonical diagonal inclusion is always surjective for all $p\in[1,\infty)$.
\end{thm}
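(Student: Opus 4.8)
The plan is to reduce the claim to a computation in the quantitative ($\varepsilon$-$r$) $K$-theory of $L^p$ operator algebras from \cite{Chung1}, where the hypothesis that $X$ has asymptotic dimension at most one can be used geometrically. First observe, as in the proof of Lemma \ref{Lem:diagidem2}, that the image of $K_0(\ell^\infty(X))\to K_0(B^p_u(X))$ is generated by the classes $[1_A]_0$ of the diagonal idempotents $1_A$ with $A\subseteq X$; so surjectivity is equivalent to showing that every class in $K_0(B^p_u(X))$ is a $\mathbb{Z}$-linear combination of classes of diagonal idempotents. Since the finite-propagation operators are dense in $B^p_u(X)$ and $K_0$ is invariant under small perturbations, every class is represented by a controlled idempotent, i.e. it lies in the image of a comparison map $K_0^{\varepsilon,r}(B^p_u(X))\to K_0(B^p_u(X))$ for suitable $\varepsilon$ and $r$; moreover $K_0(B^p_u(X))$ is the inductive limit of the groups $K_0^{\varepsilon,r}$ as $\varepsilon\to 0$ and $r\to\infty$. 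It therefore suffices to treat a controlled class of some fixed propagation $S$.

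Next I would feed in the geometry. Applying the definition of asymptotic dimension at most one at a scale $r\gg S$, write $X=U_0\sqcup U_1$ with each $U_i=\bigsqcup_k U_i^{(k)}$ a uniformly bounded, $r$-separated family; by bounded geometry the pieces $U_i^{(k)}$ are finite of uniformly bounded cardinality. Because distinct pieces of a single color are more than $r>S$ apart, any operator of propagation at most $S$ is, within each color, block diagonal over the pieces. Consequently the controlled subalgebra attached to each color is, up to controlled isomorphism, a product $\prod_k M^p_{n_k}$ of $\ell^p$ matrix algebras; for such a product the controlled group $K_0^{\varepsilon,r}$ is generated by the classes of the \emph{diagonal} rank-one idempotents, all of which lie in $\ell^\infty(X)$, while the controlled odd group vanishes. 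The coarse overlap of the two colors is $0$-dimensional at the same scale and has the same two properties.

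I would then assemble the two colors into $B^p_u(X)$ through a controlled Mayer--Vietoris (equivalently, controlled mapping-cone) exact sequence associated with the two-color decomposition, exactly along the lines of \cite[Section 5]{lowdimUR}. Since the even controlled $K$-theory of the colors and of their overlap is generated by diagonal classes and the odd controlled $K$-theory vanishes, the sequence forces any controlled class in $K_0(B^p_u(X))$ to be a combination of diagonal classes; letting $\varepsilon\to 0$ and $r\to\infty$ then gives the theorem. The single genuine departure from the $p=2$ argument is that the controlled six-term sequence in \cite{lowdimUR} is closed up using a controlled Bott periodicity isomorphism, which is not available for $L^p$ operator algebras. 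To bypass it I would work throughout with the suspension $SB^p_u(X)=C_0(\mathbb{R},B^p_u(X))$, expressing every odd controlled group as the even controlled group $K_0^{\varepsilon,r}$ of a suspension and running the mapping-cone sequence entirely in even degree; the diagonal subalgebra of $SB^p_u(X)$ is $C_0(\mathbb{R},\ell^\infty(X))$, so the $0$-dimensional analysis of the colors transfers verbatim to the suspended picture.

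The step I expect to be the main obstacle is precisely this quantitative bookkeeping: one must check that the controlled mapping-cone sequence of \cite{Chung1} is exact with uniform control on all the quantitative parameters, that $SB^p_u(X)$ together with its color decomposition fits the quantitative framework with geometric constants coming from the same decomposition of $X$, and that the subgroup generated by diagonal classes is respected by every controlled boundary map. Once these uniform estimates are in place, so that the limit over $\varepsilon,r$ faithfully computes $K_0(B^p_u(X))$, the conclusion is immediate from the triviality of the odd controlled $K$-theory and the diagonal generation of the even controlled $K$-theory of $0$-dimensional spaces.
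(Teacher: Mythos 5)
Your proposal follows essentially the same route as the paper's proof: reduce a $K_0$ class to a controlled class via the comparison results of \cite{Chung1}, decompose $X$ into two colors whose associated subalgebras (and their overlap) are block diagonal over uniformly bounded pieces, hence products of matrix algebras with vanishing $K_1$ and diagonal-generated $K_0$, assemble via the controlled Mayer--Vietoris theorem, and bypass the unavailable controlled Bott periodicity by running the whole argument in odd degree for the suspension $SB^p_u(X)$. The ``quantitative bookkeeping'' you flag as the main obstacle is precisely what \cite[Theorem 5.14]{Chung1} together with \cite[Propositions 3.20 and 3.21]{Chung1} supplies, so your outline is correct and matches the paper's argument.
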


Before proving the theorem, we note the following result, which is an immediate consequence of Lemma \ref{Lem:diagidem2} and Theorem \ref{thm:lowdim}.

\begin{cor} (see \cite[Corollary 5.5]{lowdimUR} for $p=2$ case)\label{non-amen ass one}
If $(X,d)$ is a non-amenable metric space with bounded geometry and it has asymptotic dimension (at most) one, then $K_0(B^p_u(X))=0$ for all $p\in[1,\infty)$.
\end{cor}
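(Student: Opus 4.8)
The plan is to deduce this corollary formally from the two results immediately preceding it, since both halves of the argument are already in place. The strategy is to pin down the image of the diagonal map $K_0(\ell^\infty(X))\to K_0(B^p_u(X))$ in two incompatible ways and conclude that the target group must vanish.

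First I would invoke Theorem \ref{thm:lowdim}. Because $X$ has bounded geometry and asymptotic dimension at most one, that theorem tells us the group homomorphism $K_0(\ell^\infty(X))\to K_0(B^p_u(X))$ induced by the canonical diagonal inclusion $\ell^\infty(X)\hookrightarrow B^p_u(X)$ is surjective for every $p\in[1,\infty)$. In other words, the image of this map is all of $K_0(B^p_u(X))$.

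Next I would bring in the non-amenability hypothesis through Lemma \ref{Lem:diagidem2}. Since $X$ is a non-amenable metric space with bounded geometry, that lemma asserts that the very same homomorphism $K_0(\ell^\infty(X))\to K_0(B^p_u(X))$ is the zero map, for all $p\in[1,\infty)$. Concretely, every idempotent in $\ell^\infty(X)$ (and hence in $M_n(\ell^\infty(X))$) has trivial $K_0$-class in $K_0(B^p_u(X))$, so the induced map sends everything to $[0]_0$.

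Combining the two statements is then immediate: a group homomorphism that is simultaneously surjective and identically zero forces its codomain to be the trivial group. Hence $K_0(B^p_u(X))=0$ for all $p\in[1,\infty)$, as claimed. There is no real obstacle at this stage; all the difficulty has been discharged into Theorem \ref{thm:lowdim} (whose proof relies on the quantitative $K$-theory machinery for $L^p$ operator algebras) and into Lemma \ref{Lem:diagidem2} (which rests on the paradoxical decomposition furnished by non-amenability via Theorem \ref{nonamenThm}). The corollary itself is purely a juxtaposition of these two facts.
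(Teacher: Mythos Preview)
Your proposal is correct and follows exactly the approach indicated in the paper, which states that the corollary is an immediate consequence of Lemma \ref{Lem:diagidem2} and Theorem \ref{thm:lowdim}. The combination of surjectivity (from asymptotic dimension at most one) and vanishing (from non-amenability) of the map $K_0(\ell^\infty(X))\to K_0(B^p_u(X))$ forces $K_0(B^p_u(X))=0$, just as you argue.
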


The proof of Theorem \ref{thm:lowdim} uses a controlled or quantitative version of $K$-theory for $L^p$ operator algebras from \cite{Chung1}. We state some of the necessary results that we shall use, and refer the reader to \cite{Chung1} for more details.

\begin{defn}
An $L^p$ operator algebra $A$ is filtered if it has a family $(A_r)_{r\geq 0}$ of linear subspaces indexed by non-negative real numbers $r\in [0,\infty)$ such that
\begin{itemize}
\item $A_r\subseteq A_{r'}$ if $r'\geq r$;
\item $A_r A_{r'}\subseteq A_{r+r'}$ for all $r,r'\geq 0$;
\item $\bigcup_{r\geq 0}A_r$ is dense in $A$.
\end{itemize}
If $A$ is unital with unit $1_A$, we require $1_A\in A_r$ for all $r\geq 0$. The family $(A_r)_{r\geq 0}$ is called a filtration of $A$.
\end{defn}

Given a filtered $L^p$ operator algebra $A$ with a filtration $(A_r)_{r\geq 0}$, one may consider $(\varepsilon,r,N)$-idempotents and $(\varepsilon,r,N)$-invertibles, where $0<\varepsilon<\frac{1}{20}$, $r\geq 0$, and $N\geq 1$. Then one defines appropriate homotopy relations and goes through the standard procedure in the definition of $K$-theory of Banach algebras to arrive at the controlled $K$-theory groups $K_0^{\varepsilon,r,N}(A)$ and $K_1^{\varepsilon,r,N}(A)$. For each triple $(\varepsilon,r,N)$, we have homomorphisms $c:K_*^{\varepsilon,r,N}(A)\rightarrow K_*(A)$. In the case of $K_0$, this map is given by applying holomorphic functional calculus to an $(\varepsilon,r,N)$-idempotent to obtain an idempotent; in the case of $K_1$, this map arises from the fact that every $(\varepsilon,r,N)$-invertible is invertible. 

The next two lemmas describe properties of these homomorphisms that we will need in the proof of Theorem \ref{thm:lowdim}.
\begin{lem} \cite[Proposition 3.20]{Chung1} \label{qKtoKsurj}
Let $A$ be a unital filtered $L^p$ operator algebra. Let $u$ be an invertible element in $M_n(A)$, and let $0<\varepsilon<\frac{1}{20}$. Then there exist $r>0$, $N\geq 1$ (depending only on $||u||$ and $||u^{-1}||$), and $[v]\in K_1^{\varepsilon,r,N}(A)$ with $v$ an $(\varepsilon,r,N)$-invertible in $M_n(A)$ such that $[v]=[u]$ in $K_1(A)$.
\end{lem}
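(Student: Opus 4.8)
The plan is to realize the class $[u]\in K_1(A)$ as the image, under the comparison map $c\colon K_1^{\varepsilon,r,N}(A)\to K_1(A)$, of the class of a genuine finite-propagation approximant to $u$. The starting point is the observation built into the definition of $c$ on $K_1$: an $(\varepsilon,r,N)$-invertible with $\varepsilon<\frac{1}{20}$ is automatically invertible in $M_n(A)$, and $c$ sends its controlled class to the class of this honest inverse. It therefore suffices to produce an element $v\in M_n(A_r)$ that is simultaneously an $(\varepsilon,r,N)$-invertible and homotopic to $u$ through invertibles in $M_n(A)$.

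First I would fix $M_1=\|u\|$ and $M_2=\|u^{-1}\|$ and set $N=\max(M_1,M_2)+1\ge 1$, so that $N$ depends only on the two prescribed norms. Using the density of $\bigcup_{s\ge 0}A_s$ in $A$, I would choose $r>0$ and elements $v,w\in M_n(A_r)$ with $\|v-u\|<\delta$ and $\|w-u^{-1}\|<\delta$, where $\delta>0$ is a tolerance to be fixed momentarily in terms of $\varepsilon,M_1,M_2$ only. Then $\|v\|\le M_1+\delta$ and $\|w\|\le M_2+\delta$, and a direct estimate gives
\[
\|vw-1\|\le\|v-u\|\,\|w\|+\|u\|\,\|w-u^{-1}\|\le\delta(M_2+\delta)+M_1\delta,
\]
with the symmetric bound for $\|wv-1\|$. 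Choosing $\delta$ small enough, depending only on $\varepsilon,M_1,M_2$, forces both quantities below $\varepsilon$ while keeping $\|v\|,\|w\|\le N$, so $v$ is an $(\varepsilon,r,N)$-invertible and determines a class $[v]\in K_1^{\varepsilon,r,N}(A)$.

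It then remains to check $c([v])=[u]$ in $K_1(A)$. Shrinking $\delta$ further so that $\delta M_2<1$, the Neumann series shows that $v=u\bigl(1+u^{-1}(v-u)\bigr)$ is invertible in $M_n(A)$, and the same argument applied to $u_t=u\bigl(1+t\,u^{-1}(v-u)\bigr)$ shows that the straight-line path from $u$ to $v$ stays inside the invertibles, since $\|t\,u^{-1}(v-u)\|\le\delta M_2<1$ for all $t\in[0,1]$. Hence $v$ and $u$ are homotopic through invertibles, so the honest class of $v$ equals $[u]$; as this honest class is exactly $c([v])$, we conclude $c([v])=[u]$.

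The one point that demands genuine care is the asserted dependence of the parameters only on $\|u\|$ and $\|u^{-1}\|$. The value of $N$ and the tolerance $\delta$ are manifestly functions of $\varepsilon,M_1,M_2$ alone through the estimates above; the subtlety is that density only furnishes \emph{some} $r$ realizing the precision $\delta$, and a priori this $r$ reflects how well $u$ itself is approximated at finite propagation rather than merely its norms. The main work is therefore the bookkeeping internal to the controlled framework of \cite{Chung1}: one must verify that the formation of $[v]$ and the value $c([v])$ are insensitive to the particular $r$ produced, so that the quantitative statement can legitimately be phrased with $r,N$ governed by $M_1,M_2$. I expect this normalization of the parameters, rather than the approximation or homotopy steps, to be the crux of the argument.
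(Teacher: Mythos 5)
Your proof is correct, and it is essentially the argument behind this statement: note that the paper does not prove this lemma at all but quotes it from \cite{Chung1} (Proposition 3.20), and the proof there is exactly your scheme --- approximate $u$ and $u^{-1}$ by finite-propagation elements $v,w$, estimate $\|vw-1\|\le\delta(M_2+\delta)+M_1\delta$ (and symmetrically for $wv$), take $N$ of the order of $\max(\|u\|,\|u^{-1}\|)$, and use the straight-line path $t\mapsto u+t(v-u)$ of invertibles to see that $c([v])=[u]$.

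The one thing you should revise is your final paragraph, because the difficulty you anticipate there is not real; it comes from a misreading of the statement. The parenthetical ``depending only on $\|u\|$ and $\|u^{-1}\|$'' governs $N$ (and, implicitly, your tolerance $\delta$), not $r$: the lemma asserts only the \emph{existence} of some $r>0$, which may and in general must depend on $u$ itself. Indeed $r$ cannot be a function of the norms alone --- in a filtered algebra such as $B^p_u(X)$ there are invertibles of norm one whose finite-propagation approximation to within a fixed $\delta$ requires arbitrarily large propagation --- so the ``normalization'' you propose to carry out is impossible, and fortunately unnecessary. It is also unnecessary for the way the lemma is used in this paper: in the proof of Theorem \ref{thm:lowdim}, the parameters $(\varepsilon_0,r_0,N_0)$ attached to $u$ are fixed first, then $(\varepsilon_1,r_1,N_1)$ and $(\varepsilon_2,r_2,N_2)$ are produced by Theorem \ref{MVthm}, and only afterwards is the decomposition of $X$ into $r$-separated pieces with $r>3\max(r_1,r_2)$ chosen; so an $r_0$ depending on $u$ causes no quantifier problem. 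With that reading, your proof as written --- $N$ and $\delta$ functions of $(\varepsilon,\|u\|,\|u^{-1}\|)$ only, and $r$ furnished by density of $\bigcup_{s\geq 0}A_s$ --- is already complete, modulo the well-definedness of $[v]$ and of the comparison map $c$, which is part of the framework of \cite{Chung1} and not something this lemma asks you to reprove.
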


\begin{lem} \cite[Proposition 3.21]{Chung1} \label{qKtoKinj}
There exists a (quadratic) polynomial $\rho$ with positive coefficients such that for any filtered $L^p$ operator algebra $A$, if $0<\varepsilon<\frac{1}{20\rho(N)}$, and $[e]_{\varepsilon,r,N},[f]_{\varepsilon,r,N}\in K_0^{\varepsilon,r,N}(A)$ satisfy $c([e])=c([f])$ in $K_0(A)$, then there exist $r'\geq r$ and $N'\geq N$ such that $[e]_{\rho(N)\varepsilon,r',N'}=[f]_{\rho(N)\varepsilon,r',N'}$ in $K_0^{\rho(N)\varepsilon,r',N'}(A)$.
\end{lem}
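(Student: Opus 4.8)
The plan is to establish approximate injectivity of the comparison map $c\colon K_0^{\varepsilon,r,N}(A)\to K_0(A)$ by lifting a \emph{genuine} $K_0$-equivalence to a \emph{controlled} one, paying for the lift by relaxing the control data. Throughout I write $\kappa$ for the passage from an $(\varepsilon,r,N)$-idempotent $x$ to a genuine idempotent: once $\varepsilon$ is small the spectrum of $x$ lies in two small disks about $0$ and $1$, so $\kappa(x):=\chi(x)$ is defined by holomorphic functional calculus with $\chi\equiv 1$ near $1$ and $\chi\equiv 0$ near $0$, and a standard estimate gives $\|\kappa(x)-x\|\le C\varepsilon$ for a universal $C$, with $c([x]_{\varepsilon,r,N})=[\kappa(x)]_0$. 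A class in $K_0^{\varepsilon,r,N}(A)$ is a formal difference of an $(\varepsilon,r,N)$-idempotent and a standard idempotent $s$ (on which $\kappa$ is the identity), so the hypothesis $c([e])=c([f])$ translates, after padding with suitable standard idempotents, into $[P]_0=[Q]_0$ in $K_0(A)$ for the genuine idempotents $P:=\kappa(e\oplus s)$ and $Q:=\kappa(f\oplus s)$.

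First I would invoke the standard description of $K_0$-equivalence in a unital Banach algebra: $[P]_0=[Q]_0$ holds iff there are integers $k,m$ and an invertible $U$ in a matrix algebra over the unitization of $A$ with $U\,(P\oplus 1_k\oplus 0_m)\,U^{-1}=Q\oplus 1_k\oplus 0_m$, equivalently a homotopy of idempotents through the identity component. In the purely Banach (non-$C^\ast$) $L^p$ setting one cannot pass to unitaries via polar decomposition, so I keep everything phrased in terms of idempotents and invertibles. The heart of the argument is then to \emph{quantize} this similarity. Using density of $\bigcup_{r\ge 0}A_r$ in $A$, I would approximate $U$ and $U^{-1}$ by finite-propagation elements $V$ and $W$ with $\|V\|,\|W\|\le N'$ (comparable to $\|U^{\pm 1}\|$, hence controlled by $N$), propagation at most some $r'\ge r$, and $\|VW-1\|,\|WV-1\|$ as small as we like; this makes $V$ an $(\varepsilon',r',N')$-invertible. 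Setting $e':=V\,(P\oplus 1_k\oplus 0_m)\,W$, a direct estimate shows $e'$ is a controlled idempotent that is controlled-close to $f\oplus(\text{standard})$, since $\kappa(e)\approx e$ and $\kappa(f)\approx f$ to order $\varepsilon$.

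Next I would connect $e\oplus(\text{standard})$ to $e'$ by a controlled homotopy. A quantitative Whitehead argument conjugates along the standard path of elementary matrices built from $V$ and $W$ (together with a scalar swap); the only non-scalar entries appearing are $V$ and $W$, so the propagation along the homotopy stays bounded by $r'$ while the norms stay bounded by a fixed power of $N'$. Concatenating this controlled homotopy with the controlled-closeness of $e'$ to $f\oplus(\text{standard})$ produces a controlled homotopy witnessing $[e]=[f]$ after relaxation. The parameter bookkeeping is where I expect the real work, and it dictates the shape of the conclusion: conjugating a controlled idempotent $x$ by $V,W$ replaces the self-defect $\|x^2-x\|$ by a quantity of order $\|V\|\,\|W\|\,(\|x^2-x\|+\|VW-1\|)$, i.e.\ of order $N^2\varepsilon$ once the approximations are taken finer than $\varepsilon$. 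This is exactly the source of a \emph{quadratic} polynomial $\rho$ with $\rho(N)\sim N^2$, and it forces the admissible range $0<\varepsilon<\tfrac{1}{20\rho(N)}$ so that $\rho(N)\varepsilon$ remains in the regime $(0,\tfrac{1}{20})$ required for controlled idempotents.

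The main obstacle will be carrying out these estimates \emph{uniformly} over all filtered $L^p$ operator algebras and, in particular, discretizing the genuine homotopy (equivalently the Whitehead path of invertibles) into finitely many steps, each realizable at the controlled level, so that the total degradation in $\varepsilon$ is only the single quadratic factor $\rho(N)$ while the increases $r'\ge r$ and $N'\ge N$ stay finite. Because no $C^\ast$-functional-calculus shortcuts are available here, each such step must be controlled by hand through the norm bounds encoded in $N$, and ensuring that the cumulative loss does not exceed $\rho(N)$ is the crux of the proof.
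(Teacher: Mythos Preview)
The paper does not prove this lemma at all: it is stated with a citation to \cite[Proposition 3.21]{Chung1} and used as a black box in the proof of Theorem~\ref{thm:lowdim}. There is therefore no ``paper's own proof'' to compare your proposal against.

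That said, your outline is a reasonable sketch of how such an approximate-injectivity statement is established in the quantitative $K$-theory literature: one realizes the $K_0$-equivalence $c([e])=c([f])$ as a similarity via an invertible $U$, approximates $U$ by a finite-propagation element, and then runs a controlled Whitehead-type homotopy. Your identification of the quadratic growth in $N$ as arising from conjugation estimates of the form $\|VxW\|\le\|V\|\,\|W\|\,\|x\|$ is on target. One point to be careful with: the parameters $r'$ and $N'$ you obtain depend on the particular invertible $U$ witnessing the $K_0$-equivalence (hence on $e$ and $f$), not just on the triple $(\varepsilon,r,N)$; this is consistent with the lemma as stated (which only asserts existence of $r',N'$ for given classes), but your write-up momentarily suggests $N'$ is ``controlled by $N$'', which would be too strong. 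If you want to see the detailed bookkeeping, the argument is carried out in \cite{Chung1}.
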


\begin{defn}
Let $A$ be a filtered $L^p$ operator algebra with filtration $(A_r)_{r\geq 0}$. A pair $(I,J)$ of closed ideals of $A$ is a controlled Mayer-Vietoris pair for $A$ if it satisfies the following conditions:
\begin{itemize}
\item For any $r\geq 0$, any positive integer $n$, and any $x\in M_n(A_r)$, there exist $x_1\in M_n(I\cap A_r)$ and $x_2\in M_n(J\cap A_r)$ such that $x=x_1+x_2$ and $\max(||x_1||,||x_2||)\leq ||x||$;
\item $I$ and $J$ have filtrations $(I\cap A_r)_{r\geq 0}$ and $(J\cap A_r)_{r\geq 0}$ respectively;
\item There exists $c>0$ such that for any $r\geq 0$, any $\varepsilon>0$, any positive integer $n$, any $x\in M_n(I\cap A_r)$ and $y\in M_n(J\cap A_r)$ with $||x-y||<\varepsilon$, there exists $z\in M_n(I\cap J\cap A_{cr})$ such that $\max(||z-x||,||z-y||)<c\varepsilon$.
\end{itemize}
\end{defn}

\begin{rem}
In \cite{Chung1}, there is a more general definition of a controlled Mayer-Vietoris pair that involves subalgebras instead of ideals, but we use this slightly simpler version here as it is sufficient for our purposes. We also note that this is slightly different from the definition of a uniformly excisive pair of ideals used in \cite{lowdimUR}.
\end{rem}

\begin{thm}\cite[Theorem 5.14]{Chung1} \label{MVthm}
Given a triple $(\varepsilon_0,r_0,N_0)\in(0,\frac{1}{20})\times[0,\infty)\times[1,\infty)$, there exist $(\varepsilon_1,r_1,N_1)$ and $(\varepsilon_2,r_2,N_2)$ with $\varepsilon_i\geq\varepsilon_0$, $r_i\geq r_0$, and $N_i\geq N_0$ such that for any filtered $L^p$ operator algebra $A$ and any controlled Mayer-Vietoris pair $(I,J)$ for $A$, if $x\in K_1^{\varepsilon_0,r_0,N_0}(A)$, then there exists $\partial_c x\in K_0^{\varepsilon_1,r_1,N_1}(I\cap J)$ with the property that if $\partial_c x=0$, then there exist $y\in K_1^{\varepsilon_2,r_2,N_2}(I)$ and $z\in K_1^{\varepsilon_2,r_2,N_2}(J)$ such that $x=y+z$ in $K_1^{\varepsilon_2,r_2,N_2}(A)$.
\end{thm}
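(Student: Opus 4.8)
The plan is to prove this as a controlled (quantitative) analog of the classical Mayer--Vietoris boundary map, following the template of Oyono-Oyono and Yu in the $C^*$-setting and adapting every estimate to $(\varepsilon,r,N)$-invertibles and $(\varepsilon,r,N)$-idempotents in the filtered $L^p$ algebra. The entire argument is a bookkeeping of how the triple $(\varepsilon,r,N)$ degrades under the algebraic operations performed, so the two substantive tasks are (a) constructing $\partial_c$ and (b) verifying the one-sided exactness, each at the cost of a controlled enlargement of parameters.

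First I would represent $x\in K_1^{\varepsilon_0,r_0,N_0}(A)$ by a single $(\varepsilon_0,r_0,N_0)$-invertible $u\in M_n(A_{r_0})$ with quasi-inverse $v\in M_n(A_{r_0})$, norms bounded by $N_0$, and then build $\partial_c x$ by a controlled clutching construction. Form the standard Whitehead factorization of $\mathrm{diag}(u,v)$ into elementary $2n\times 2n$ matrices $\begin{pmatrix}1&u\\0&1\end{pmatrix}$, $\begin{pmatrix}1&0\\-v&1\end{pmatrix}$, $\begin{pmatrix}1&u\\0&1\end{pmatrix}$ times the scalar rotation $\begin{pmatrix}0&-1\\1&0\end{pmatrix}$; the defect from exact equality is controlled by $\varepsilon_0$. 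Using the decomposition property of the controlled Mayer--Vietoris pair, split $u=u_I+u_J$ (and similarly $v=v_I+v_J$) with $u_I\in M_n(I\cap A_{r_0})$, $u_J\in M_n(J\cap A_{r_0})$ and $\max(\|u_I\|,\|u_J\|)\le N_0$. Since $\begin{pmatrix}1&u_I\\0&1\end{pmatrix}\begin{pmatrix}1&u_J\\0&1\end{pmatrix}=\begin{pmatrix}1&u\\0&1\end{pmatrix}$ exactly, each elementary factor splits into an $\tilde I$-elementary and a $\tilde J$-elementary matrix, and a computation shows that the quasi-idempotent $p=w\,\mathrm{diag}(1_n,0)\,w^{-1}$ (where $w$ is the product of elementary factors) satisfies: from one grouping of the factors, $p-\mathrm{diag}(1_n,0)$ lies up to controlled error in $M_{2n}(I)$, and from another grouping, up to controlled error in $M_{2n}(J)$. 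I then invoke the coherence property (the third condition on the pair, with constant $c$) to replace this near-coincidence by a genuine element of $M_{2n}(I\cap J\cap A_{cr_0})$, turning $p$ into a quasi-idempotent over the unitalization of $I\cap J$ with enlarged parameters $(\varepsilon_1,r_1,N_1)$, and I set $\partial_c x:=[p]_{\varepsilon_1,r_1,N_1}-[\mathrm{diag}(1_n,0)]_{\varepsilon_1,r_1,N_1}$. Independence of the representative $u$ and of the splitting is checked by running the same construction on controlled homotopies.

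For the exactness statement I would assume $\partial_c x=0$ in $K_0^{\varepsilon_1,r_1,N_1}(I\cap J)$, i.e.\ the quasi-idempotent $p$ is controlled-equivalent, inside the unitalization of $I\cap J$, to $\mathrm{diag}(1_n,0)$. This trivialization supplies a controlled homotopy of $p$ back to $\mathrm{diag}(1_n,0)$ through quasi-idempotents over $I\cap J$; conjugating the clutching element $w$ along this homotopy lets me reorganize the factorization so that the $\tilde I$-elementary and the $\tilde J$-elementary factors decouple. Unwinding expresses $u$, up to a controlled multiplicative error at enlarged parameters $(\varepsilon_2,r_2,N_2)$, as a product $u\approx u_1u_2$ with $u_1$ an $(\varepsilon_2,r_2,N_2)$-invertible in $M_n(\tilde I)$ and $u_2$ one in $M_n(\tilde J)$. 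Hence $x=[u_1]+[u_2]=y+z$ with $y\in K_1^{\varepsilon_2,r_2,N_2}(I)$ and $z\in K_1^{\varepsilon_2,r_2,N_2}(J)$, as required; the comparison Lemmas \ref{qKtoKsurj} and \ref{qKtoKinj} are the tools for passing between controlled classes and honest homotopies where the clutching needs genuine invertibility.

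The main obstacle is the uniform parameter bookkeeping rather than any single algebraic identity. Every operation — the Whitehead factorization, the absorption of elementary factors, the application of the coherence constant $c$, the conjugations along the trivializing homotopy, and the matrix amplifications — either multiplies $\varepsilon$ by a universal factor, scales $r$ by a factor depending on $c$ and on the number of factors, or enlarges $N$. The delicate point is to verify that all of these enlargements depend \emph{only} on the input triple $(\varepsilon_0,r_0,N_0)$ and on universal constants (including $c$ and the polynomial $\rho$ of Lemma \ref{qKtoKinj}), and never on $A$ or on the particular pair $(I,J)$, while simultaneously keeping the degraded $\varepsilon$ below the admissibility thresholds $\tfrac{1}{20}$ and $\tfrac{1}{20\rho(N)}$ at every stage. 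Establishing this uniformity is what upgrades the construction from a single algebraic manipulation into a genuine controlled Mayer--Vietoris theorem, and is where the bulk of the careful estimation lies.
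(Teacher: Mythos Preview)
The paper does not supply its own proof of this statement: Theorem~\ref{MVthm} is simply quoted from \cite[Theorem~5.14]{Chung1} and used as a black box in the proof of Theorem~\ref{thm:lowdim}. There is therefore nothing in the present paper to compare your proposal against.

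That said, your sketch follows the expected route --- the controlled clutching construction for the boundary map and the controlled exactness argument, adapted from the Oyono-Oyono--Yu $C^*$-algebraic framework to the $(\varepsilon,r,N)$ setting of \cite{Chung1} --- and this is indeed the approach taken in the cited reference. Your identification of the main difficulty (uniform parameter bookkeeping independent of $A$ and $(I,J)$) is accurate. If you want to verify the details, you should consult \cite{Chung1} directly rather than the present paper.
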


Now let us return to the setting of Theorem \ref{thm:lowdim}. Let $X$ be a metric space with bounded geometry. For any subset $U$ of $X$ and $r\geq 0$, let $U^{(r)}$ denote the $r$-neighborhood of $U$, i.e., \[U^{(r)}=\{ x\in X:d(x,U)\leq r\}.\]

For an element in $M_n(B^p_u(X))$, we define its support to be the union of the supports of all its matrix entries, and its propagation to be the maximum of the propagation of its matrix entries.

Assume that $X$ has asymptotic dimension at most one. Then for any $R>0$, there is a decomposition $X=U\sqcup V$ such that each of $U=\bigsqcup_{i\in I}U_i$ and $V=\bigsqcup_{j\in J}V_j$ are disjoint unions of uniformly bounded $R$-separated sets. For such a decomposition, and for $r\geq 0$, consider the following subspaces of $B^p_u(X)$:
\begin{align*}
\mathfrak{N}(U)_r &= \{ a\in B^p_u(X): \mathrm{supp}(a)\subseteq\bigcup_{i\in I}U_i^{(r)}\times U_i^{(r)},\mathrm{prop}(a)\leq r \}, \\
\mathfrak{N}(V)_r &= \{ a\in B^p_u(X): \mathrm{supp}(a)\subseteq\bigcup_{j\in J}V_j^{(r)}\times V_j^{(r)},\mathrm{prop}(a)\leq r \}, \\
\mathfrak{N}(U\cap V)_r &= \{ a\in B^p_u(X): \mathrm{supp}(a)\subseteq\bigcup_{i\in I,j\in J}(U_i^{(r)}\cap V_j^{(r)})\times (U_i^{(r)}\cap V_j^{(r)}) \}.
\end{align*}

Now for $r\geq 0$, define
\begin{align*}
A_r &= \mathfrak{N}(U)_r+\mathfrak{N}(V)_r+\mathfrak{N}(U\cap V)_r, \\
I_r &= \mathfrak{N}(U)_r+\mathfrak{N}(U\cap V)_r, \\
J_r &= \mathfrak{N}(V)_r+\mathfrak{N}(U\cap V)_r.
\end{align*}
and let $A=\overline{\bigcup_{r\geq 0}A_r}$, $I=\overline{\bigcup_{r\geq 0}I_r}$, and $J=\overline{\bigcup_{r\geq 0}J_r}$. We observe that $A=B^p_u(X)$.

\begin{lem}
The subspaces $A_r$. $I_r$, and $J_r$ provide filtrations for $A$, $I$, and $J$ respectively. Moreover, $I$ and $J$ are ideals in $A$.
\end{lem}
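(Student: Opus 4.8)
The plan is to verify the three filtration axioms (nesting, multiplicativity $A_rA_{r'}\subseteq A_{r+r'}$, and density of $\bigcup_r A_r$, together with $1\in A_r$) for each of the three families, and then to deduce the ideal property from the same computations. Since all pieces $\mathfrak{N}(U)_r,\mathfrak{N}(V)_r,\mathfrak{N}(U\cap V)_r$ are defined by support conditions built from the $r$-neighborhoods $U_i^{(r)}$, $V_j^{(r)}$, nesting is immediate from the monotonicity $U_i^{(r)}\subseteq U_i^{(r')}$ for $r\le r'$ (and likewise for $V_j$), so each family is increasing in $r$; hence the same holds for $A_r$, $I_r$, $J_r$. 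For the unit, $1_X=1_U+1_V$ with $1_U\in\mathfrak{N}(U)_r$ and $1_V\in\mathfrak{N}(V)_r$ for every $r$, since these are diagonal operators of propagation $0$ supported on $\bigsqcup_i U_i$ and $\bigsqcup_j V_j$ respectively, so $1_X\in A_r$ for all $r\ge 0$.

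For density I would first check that $A=B^p_u(X)$ by showing every finite propagation operator lies in some $A_r$. Given $T$ with $\mathrm{prop}(T)\le s$, write $T=1_UT1_U+1_UT1_V+1_VT1_U+1_VT1_V$. A short triangle-inequality computation shows $1_UT1_U\in\mathfrak{N}(U)_s$ and $1_VT1_V\in\mathfrak{N}(V)_s$ (if $x\in U_i$ and $T_{xy}\ne 0$ then $d(y,U_i)\le d(x,y)\le s$, so $(x,y)\in (U_i^{(s)})^2$), while the two off-diagonal corners lie in $\mathfrak{N}(U\cap V)_s$ (if $x\in U_i$, $y\in V_j$ with $d(x,y)\le s$ then $x,y\in U_i^{(s)}\cap V_j^{(s)}$). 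Hence $T\in A_s$, and as finite propagation operators are dense, $\bigcup_r A_r$ is dense in $B^p_u(X)=A$. Density of $\bigcup_r I_r$ in $I$ and of $\bigcup_r J_r$ in $J$ holds by definition of $I$ and $J$.

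The heart of the proof is multiplicativity $A_rA_{r'}\subseteq A_{r+r'}$, which I would establish by expanding into the nine products of defining pieces and using that $(ab)_{xy}=\sum_z a_{xz}b_{zy}$ is nonzero only when some intermediate point $z$ lies in a support block of $a$ and in a support block of $b$; passing through $z$ and applying the triangle inequality relocates $(x,y)$ into a single block at parameter $r+r'$. Any product with at least one factor from $\mathfrak{N}(U)$ or $\mathfrak{N}(V)$ is controlled because that factor carries the sharp propagation bound $\le r$: for example $\mathfrak{N}(U)_r\cdot\mathfrak{N}(U)_{r'}\subseteq\mathfrak{N}(U)_{r+r'}$, whereas $\mathfrak{N}(U)_r\cdot\mathfrak{N}(V)_{r'}$ and $\mathfrak{N}(U)_r\cdot\mathfrak{N}(U\cap V)_{r'}$ both land in $\mathfrak{N}(U\cap V)_{r+r'}$ (one uses $d(x,z)\le r$ to push $x$ into the block containing $z$ and $y$). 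The delicate case, and the main obstacle, is the product $\mathfrak{N}(U\cap V)_r\cdot\mathfrak{N}(U\cap V)_{r'}$ of two intersection pieces, where neither factor carries the sharp propagation bound. The key input here is that the overlap blocks $U_i^{(r)}\cap V_j^{(r)}$ are uniformly bounded, their diameters being controlled by the uniform diameter bound on $U_i$ and $V_j$; this uniform bound is what substitutes for a propagation bound on the intersection pieces and lets one place $(x,y)$ in a common block at level $r+r'$.

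Finally, the ideal property follows from the same product analysis. By continuity it suffices to verify the products at the filtered level, and every product of a piece of $A_r$ with a piece of $I_{r'}$ lands in $\mathfrak{N}(U)_{r+r'}+\mathfrak{N}(U\cap V)_{r+r'}=I_{r+r'}$: the only genuinely new products, namely multiplying the $\mathfrak{N}(V)$ part of $A$ into the pieces of $I$, output $\mathfrak{N}(U\cap V)$ terms, which already lie in $I$. The symmetric computation shows $J$ is an ideal. I would remark that the ideal property itself needs only containment in $I$ (resp. $J$) at some finite level rather than the sharp index $r+r'$, so the subtlety flagged above is confined to the multiplicativity axiom of the filtration.
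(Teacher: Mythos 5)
Your overall route is the same as the paper's: its proof consists precisely of the six product inclusions you enumerate (asserted there to be ``fairly straightforward''), and your treatment of nesting, the unit, density, and of every product in which at least one factor is of type $\mathfrak{N}(U)$ or $\mathfrak{N}(V)$ is correct, since the propagation bound carried by that factor makes the triangle inequality close up at level $r+r'$. The genuine gap is exactly in the case you flag as the heart of the proof, $\mathfrak{N}(U\cap V)_r\cdot\mathfrak{N}(U\cap V)_{r'}$: uniform boundedness of the blocks does \emph{not} place the product in a common block at level $r+r'$. If $x,z\in U_i^{(r)}\cap V_j^{(r)}$ and $z,y\in U_{i'}^{(r')}\cap V_{j'}^{(r')}$, the diameter bound only gives $d(x,z)\leq D+2r$ (where $D$ bounds $\mathrm{diam}\,U_i$ and $\mathrm{diam}\,V_j$), hence $d(x,U_{i'})\leq D+2r+r'$, which overshoots the required level $r+r'$ by $D+r$; and the overshoot is real, not an artifact of the estimate. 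Counterexample: let $X=\mathbb{Z}$, let the $U$-sets be the even singletons $\{2k\}$, and let the $V$-sets be the odd singletons except that $\{3,5\}$ and $\{11,13\}$ are each a single $V$-set; these are uniformly bounded, $1$-separated families, so this is an admissible decomposition. Writing $e_{xy}$ for matrix units, both $e_{0,8}$ and $e_{8,16}$ lie in $\mathfrak{N}(U\cap V)_4$ (witnessed by the pairs $(\{4\},\{3,5\})$ and $(\{12\},\{11,13\})$), but $e_{0,8}e_{8,16}=e_{0,16}$ is not even in $A_8$: every element of $\mathfrak{N}(U)_8+\mathfrak{N}(V)_8$ has vanishing $(0,16)$ entry because $d(0,16)=16>8$, and $(0,16)$ lies in no level-$8$ block of $\mathfrak{N}(U\cap V)_8$ because no $V$-set meets both $[-8,8]$ and $[8,24]$. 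Hence $A_4A_4\not\subseteq A_8$, so your argument for the multiplicativity axiom cannot be repaired by bookkeeping alone.

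What makes the delicate inclusion true, in the only regime where it is true, is separation rather than boundedness: if $r+r'\leq R$, where distinct $U_i$ (resp.\ $V_j$) are more than $R$ apart, then $z\in U_i^{(r)}\cap U_{i'}^{(r')}$ forces $i=i'$ and likewise $j=j'$, so $x$ and $y$ already lie in the single block $U_i^{(r+r')}\cap V_j^{(r+r')}$. You should know that this defect is shared by the paper itself: its third ``straightforward'' inclusion, $\mathfrak{N}(U\cap V)_r\cdot\mathfrak{N}(U\cap V)_s\subseteq\mathfrak{N}(U\cap V)_{r+s}$, is exactly the one that fails, so the lemma as literally stated (a filtration for all $r\geq 0$, for an arbitrary admissible decomposition) needs the filtration parameters to stay below the separation scale to be correct --- which is the only regime in which it is used in Theorem \ref{thm:lowdim}, where the decomposition is chosen with separation $r>3\max(r_1,r_2)$ \emph{after} the controlled parameters are fixed. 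Your closing remark, by contrast, is correct and is the one place where uniform boundedness really is the relevant input: the troublesome product does land in $\mathfrak{N}(U\cap V)_{D+2r+r'}$, hence in $I$ and in $J$ at a worse level, and containment at some finite level is all the ideal property requires.
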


\begin{proof}
Given $r,s\geq 0$, it is fairly straightforward to check that
\begin{enumerate}
\item $\mathfrak{N}(U)_r\cdot \mathfrak{N}(U)_s\subseteq \mathfrak{N}(U)_{r+s}$, \\
\item $\mathfrak{N}(V)_r\cdot \mathfrak{N}(V)_s\subseteq \mathfrak{N}(V)_{r+s}$, \\
\item $\mathfrak{N}(U\cap V)_r\cdot \mathfrak{N}(U\cap V)_s\subseteq \mathfrak{N}(U\cap V)_{r+s}$, \\
\item $\mathfrak{N}(U)_r\cdot \mathfrak{N}(V)_s\subseteq \mathfrak{N}(U\cap V)_{r+s}$, \\
\item $\mathfrak{N}(U)_r\cdot \mathfrak{N}(U\cap V)_s\subseteq \mathfrak{N}(U\cap V)_{r+s}$, \\
\item $\mathfrak{N}(V)_r\cdot \mathfrak{N}(U\cap V)_s\subseteq \mathfrak{N}(U\cap V)_{r+s}$.
\end{enumerate}
\end{proof}

\begin{lem}
The pair $(I,J)$ is a controlled Mayer-Vietoris pair for $A$.
\end{lem}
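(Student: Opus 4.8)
The plan is to verify the three defining properties of a controlled Mayer--Vietoris pair in turn, the bulk of the work being a single bookkeeping observation about how the characteristic functions $1_U,1_V\in\ell^\infty(X)$ — viewed as diagonal multiplication operators, so that $1_U+1_V=1$ — interact with the three building blocks of $A_r$. Concretely, I would first establish the following \emph{corner observation}: for every $x\in M_n(A_r)$,
\[ 1_Ux1_V,\ 1_Vx1_U\in M_n(\mathfrak{N}(U\cap V)_r),\qquad 1_Ux1_U\in M_n(I_r),\qquad 1_Vx1_V\in M_n(J_r). \]
To see this, write $x=a+b+c$ with $a\in\mathfrak{N}(U)_r$, $b\in\mathfrak{N}(V)_r$, $c\in\mathfrak{N}(U\cap V)_r$, and track supports entrywise. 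Since $X=U\sqcup V$, each point lies in a unique $U_i$ or $V_j$; hence if an entry $(p,q)$ of $a$ or $b$ satisfies $d(p,q)\le r$ and $p\in U_i$, then $d(q,U_i)\le d(q,p)\le r$ forces $q\in U_i^{(r)}$, so the corresponding corner returns to $\mathfrak{N}(U)_r$; and if in addition $q\in V_j$, then $p,q$ both lie in a common $U_i^{(r)}\cap V_j^{(r)}$, so the mixed corner lands in $\mathfrak{N}(U\cap V)_r$. The only piece carrying no propagation bound, $c$, is supported in the overlap blocks to begin with, so all of its corners remain in $\mathfrak{N}(U\cap V)_r$. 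The essential feature is that everything stays at the \emph{same} scale $r$.

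Granting the corner observation, \textbf{condition (i)} is immediate: given $x\in M_n(A_r)$ set $x_1=1_Ux=1_Ux1_U+1_Ux1_V$ and $x_2=1_Vx=1_Vx1_U+1_Vx1_V$. By the observation $x_1\in M_n(I_r)\subseteq M_n(I\cap A_r)$ and $x_2\in M_n(J_r)\subseteq M_n(J\cap A_r)$, while $x=x_1+x_2$ and $\max(\|x_1\|,\|x_2\|)\le\|x\|$ because $1_U,1_V$ are contractive idempotents on $\ell^p$. For \textbf{condition (ii)} I would not attempt to identify $I\cap A_r$ with $I_r$ (they differ in general), but simply check the filtration axioms for $(I\cap A_r)_r$ directly: monotonicity is clear; $(I\cap A_r)(I\cap A_{r'})\subseteq I\cap A_{r+r'}$ follows from $I$ being an ideal together with $A_rA_{r'}\subseteq A_{r+r'}$; and $\bigcup_r(I\cap A_r)\supseteq\bigcup_r I_r$ is dense in $I$ by the preceding lemma. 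The same applies to $J$, and no unit condition is needed since $I,J$ are (non-unital) ideals.

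The heart of the argument is \textbf{condition (iii)}. Given $x\in M_n(I\cap A_r)$ and $y\in M_n(J\cap A_r)$ with $\|x-y\|<\varepsilon$, I would take
\[ z=1_Uy1_U+1_Vx1_V+1_Ux1_V+1_Vx1_U. \]
Each summand lies in $M_n(I\cap J\cap A_r)$: the two mixed corners are in $\mathfrak{N}(U\cap V)_r\subseteq I_r\cap J_r$; the corner $1_Vx1_V$ is in $J_r$ by the observation and in $I$ because $x\in I$ and $I$ is an ideal; symmetrically $1_Uy1_U\in I_r$ and lies in $J$ because $y\in J$. Expanding $x=1_Ux1_U+1_Ux1_V+1_Vx1_U+1_Vx1_V$ and likewise for $y$, one finds $x-z=1_U(x-y)1_U$ and $y-z=1_V(y-x)1_V+1_U(y-x)1_V+1_V(y-x)1_U$, whence $\|x-z\|<\varepsilon$ and $\|y-z\|<3\varepsilon$. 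Thus $c=3$ works, and since every summand of $z$ already sits at scale $r$ we get $z\in M_n(I\cap J\cap A_r)\subseteq M_n(I\cap J\cap A_{cr})$ with no loss in the propagation parameter.

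I expect the main obstacle to be the corner observation rather than the Mayer--Vietoris estimate itself. One must keep the propagation bounds honest when restricting by $1_U,1_V$, and in particular resist the tempting but false belief that elements of $I\cap A_r$ are supported near $U$: a single diagonal matrix unit $e_{pp}$ with $p$ lying deep inside $V$ already belongs to $I\cap A_0$, so the naive idea of cutting $x$ down to a neighborhood of $V$ and discarding a vanishing remainder does not work. It is precisely the decomposition $X=U\sqcup V$ into the individual $R$-separated pieces, rather than a coarse ``near $U$ / near $V$'' dichotomy, that makes the corner bookkeeping close up at a fixed scale; establishing these support inclusions correctly, uniformly in $r$ and $n$, is where the care is required.
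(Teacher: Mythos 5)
Your proof is correct, and on conditions (i) and (ii) it essentially coincides with the paper's: the paper also splits $x=1_Ux+1_Vx$ (written there as $\sum_{i}1_{U_i}x+\sum_{j}1_{V_j}x$) for condition (i), and it disposes of condition (ii) via the preceding lemma, whereas you check the axioms for $(I\cap A_r)_{r\geq 0}$ directly --- a slightly more careful treatment, since the definition literally asks for the filtration $(I\cap A_r)_{r\geq 0}$ rather than $(I_r)_{r\geq 0}$. The genuine divergence is in condition (iii). The paper takes $a\in M_n(I_r)$, $b\in M_n(J_r)$, cuts on the right by the characteristic functions $\chi$ of $\bigcup_{i}U_i^{(r)}$ and $\chi'$ of $\bigcup_{j}V_j^{(r)}$, uses the identities $a\chi=a$ and $b\chi'=b$ (valid because elements of $I_r$ and $J_r$ have column support in those neighborhoods), and takes the average $\tfrac{1}{2}(a\chi'+b\chi)\in M_n(I_{2r}\cap J_{2r})$, obtaining the constant $\tfrac{5}{2}$ at scale $2r$. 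Your corner element $z=1_Uy1_U+1_Vx1_V+1_Ux1_V+1_Vx1_U$ stays at scale $r$ with constant $3$, and --- more importantly --- it verifies condition (iii) for arbitrary $x\in M_n(I\cap A_r)$ and $y\in M_n(J\cap A_r)$, because membership in $I$ or $J$ enters only through the ideal property established in the preceding lemma. This is a real, if minor, improvement: as you observe, $I\cap A_r\neq I_r$ in general (a diagonal matrix unit at a point deep inside $V$ lies in $I\cap A_0$ but not in $I_0$), and for such elements the paper's identity $a\chi=a$ fails, so the paper's argument as written verifies condition (iii) only on the dense subspaces $I_r$, $J_r$ rather than on the sets demanded by the definition; your version closes that gap. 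Your corner observation itself is sound and is exactly the right bookkeeping: the $\mathfrak{N}(U)_r$ and $\mathfrak{N}(V)_r$ pieces are controlled by their propagation bound together with the partition $X=U\sqcup V$ (if $p\in U_{i'}$, $q\in V_{j'}$ and $d(p,q)\leq r$, then both points lie in $U_{i'}^{(r)}\cap V_{j'}^{(r)}$), while the $\mathfrak{N}(U\cap V)_r$ piece, which carries no propagation bound, is handled by pure support restriction; both checks are uniform in $r$ and $n$, as required.
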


\begin{proof}
For a subset $Y$ of $X$, we will let $1_Y$ denote the characteristic function of $Y$ regarded as a diagonal element in $B^p_u(X)$. We will also write $1_Y$ for $1_Y\otimes I_n\in M_n(B^p_u(X))$.

Let $n\in\mathbb{N}$ and $r\geq 0$. Given $a\in M_n(A_r)$, we have \[a=\sum_{i\in I}1_{U_i}a+\sum_{j\in J}1_{V_j}a\] with $\sum_{i\in I}1_{U_i}a\in M_n(I_r)$, $\sum_{j\in J}1_{V_j}a\in M_n(J_r)$, and \[\max(||\sum_{i\in I}1_{U_i}a||,||\sum_{j\in J}1_{V_j}a||)\leq||a||.\]

Suppose that $a\in M_n(I_r)$ and $b\in M_n(J_r)$ are such that $||a-b||<\varepsilon$. Let $\chi$ be the characteristic function of $\bigcup_{i\in I}U_i^{(r)}$, and let $\chi'$ be the characteristic function of $\bigcup_{j\in J}V_j^{(r)}$. Then $a\chi',b\chi\in M_n(I_{2r}\cap J_{2r})$, $||a-b\chi||=||a\chi-b\chi||<\varepsilon$, and $||b-a\chi'||=||b\chi'-a\chi'||<\varepsilon$. Hence letting $c=\frac{a\chi'+b\chi}{2}\in M_n(I_{2r}\cap J_{2r})$, we have
$||a-c||<\frac{5}{2}\varepsilon$ and $||b-c||<\frac{5}{2}\varepsilon$.
\end{proof}

Now we are ready for the proof of Theorem \ref{thm:lowdim}. In the proof, we will actually use the fact that the pair of suspensions $(SI,SJ)$ is a controlled Mayer-Vietoris pair for $SA$ rather than working directly with $I$ and $J$ (cf. \cite[Remark 5.6]{Chung1}). This is because a controlled version of Bott periodicity was not proved in \cite{Chung1}.

\begin{proof}[Proof of Theorem \ref{thm:lowdim}]
The $p=2$ case was proved in \cite[Theorem 5.2]{lowdimUR} so we will assume that $p\in[1,\infty)\setminus\{2\}$.

Suppose that $x\in K_0(B^p_u(X))\cong K_1(SB^p_u(X))$, and $x$ is represented by an invertible $u\in M_n(\widetilde{SB^p_u(X)})$, where $\widetilde{SB^p_u(X)}$ denotes the unital algebra obtained by adjoining a unit to $SB^p_u(X)$. Consider the controlled Mayer-Vietoris pair $(SI,SJ)$ for $SA=SB^p_u(X)$.
By Lemma \ref{qKtoKsurj}, given $0<\varepsilon_0<\frac{1}{20}$, there exists $r_0\geq 0$, $N_0\geq 1$ (depending only on $||u||$ and $||u^{-1}||$), and a quasi-invertible $v\in M_n(\widetilde{SB^p_u(X)})$ with propagation at most $r_0$ such that $[v]\in K_1^{\varepsilon_0,r_0,N_0}(SB^p_u(X))$ and $[v]=x$ in $K_1(SB^p_u(X))$. Let $(\varepsilon_1,r_1,N_1)$ and $(\varepsilon_2,r_2,N_2)$ be associated to $(\varepsilon_0,r_0,N_0)$ as in Theorem \ref{MVthm}.

By the assumption on asymptotic dimension, there exists a decomposition $X=U\sqcup V$ such that each of $U=\bigsqcup_{i\in I}U_i$ and $V=\bigsqcup_{j\in J}V_j$ are disjoint unions of uniformly bounded $r$-separated sets with $r>3\max(r_1,r_2)$. Note that since $2r_1<r$, $\mathfrak{N}(U\cap V)_{r_1}$ is a direct product of matrix algebras of uniformly bounded sizes, and thus a Banach algebra direct limit of a directed system of finite direct sums of such matrix algebras (see \cite[Lemma 8.4]{WZ} or the proof of Theorem \ref{thmAsdim0} (i) $\Rightarrow$ (ii) in the next section).
By continuity of the $K_1$ functor,
we have \[K_0(S\mathfrak{N}(U\cap V)_{r_1})=K_1(\mathfrak{N}(U\cap V)_{r_1})=0.\] 
Consider $\partial_c[v]\in K_0^{\varepsilon_1,r_1,N_1}(SI\cap SJ)=K_0^{\varepsilon_1,r_1,N_1}(S\mathfrak{N}(U\cap V)_{r_1})$. By Lemma \ref{qKtoKinj}, $\partial_c[v]=0$ in $K_0^{\frac{1}{20},r',N'}(SI\cap SJ)$ for some $r'\geq r_1$ and $N'\geq N_1$.
Then by Theorem \ref{MVthm}, there exist $y\in K_1^{\varepsilon_2,r_2,N_2}(SI)$ and $z\in K_1^{\varepsilon_2,r_2,N_2}(SJ)$ with $[v]=y+z$ in $K_1^{\varepsilon_2,r_2,N_2}(SB^p_u(X))$, and thus $x=c(y)+c(z)$ in $K_1(SB^p_u(X))= K_0(B^p_u(X))$, where $c$ denotes the respective compositions
\begin{align*} K_1^{\varepsilon_2,r_2,N_2}(SI)&\rightarrow K_1^{\varepsilon_2,r_2,N_2}(SB^p_u(X))\rightarrow K_1(SB^p_u(X)) \\ K_1^{\varepsilon_2,r_2,N_2}(SJ)&\rightarrow K_1^{\varepsilon_2,r_2,N_2}(SB^p_u(X))\rightarrow K_1(SB^p_u(X)). \end{align*}
Recall that these compositions take a quantitative $K$-theory class represented by a quasi-invertible element (which is actually invertible) and send it to a $K$-theory class represented by the same element.

On the other hand, since $3r_2<r$, there is a factorization
\[
\begindc{\commdiag}[100]		
\obj(0,10)[1a]{$K_1^{\varepsilon_2,r_2,N_2}(SI)$}		
\obj(10,10)[1b]{$K_1^{\varepsilon_2,r_2,N_2}(SB^p_u(X))$}
\obj(20,10)[1c]{$K_1(SB^p_u(X))$} 
\obj(10,0)[2b]{$K_1(S(\prod_{i\in I}B(\ell^p(U_i^{(r_2)}))))$} 

\mor{1a}{1b}{}	\mor{1b}{1c}{} \mor{1a}{2b}{} \mor{2b}{1c}{}	
\enddc
\]
and similarly for $J$ and $V_j$. Since $\prod_{i\in I}B(\ell^p(U_i^{(r_2)})))$ is a direct product of matrix algebras of uniformly bounded sizes, any element in its $K_0$ group is equivalent to something in the image of the map on $K$-theory induced by the diagonal inclusion
\[ \ell^\infty(U^{(r_2)})\rightarrow\prod_{i\in I}B(\ell^p(U_i^{(r_2)})), \]
and similarly for $V$. Hence $x$ is in the image of the homomorphism induced by the canonical diagonal inclusion.
\end{proof}

\section{$\ell^p$ uniform Roe algebras of spaces with zero asymptotic dimension}
In this section, we study the structure and $K$-theory of $\ell^p$ uniform Roe algebras of metric spaces with zero asymptotic dimension. We start by recalling the following equivalent definition for asymptotic dimension zero (see Definition 2.1 in \cite{lowdimUR}):
\begin{defn}\label{remAsdim}
Let $(X,d)$ be a metric space with bounded geometry. For $r\geq 0$, let $\sim_r$ be the equivalence relation generated by the relation $xRy\Leftrightarrow d(x,y)\leq r$. Then $X$ has asymptotic dimension zero if and only if for each $r\geq 0$, the relation $\sim_r$ has uniformly finite equivalence classes.
\end{defn}
We will show that a metric space $X$ has asymptotic dimension zero if and only if $B^p_u(X)$ is a spatial $L^p$ AF algebra (see \cite[Definition~9.1]{PhilViola}) if and only if $B^p_u(X)$ has cancellation in the following sense: 
\begin{defn}\cite[Section 6.4]{Bl}\label{cancellation}
Let $A$ be a Banach algebra. We say that $A$ has cancellation of idempotents if whenever $e,f,g,h$ are idempotents in $A$ with $e\perp g$, $f\perp h$, $e\sim f$, and $e+g\sim f+h$, then $g\sim h$.

We say that $A$ has cancellation if $M_n(A)$ has cancellation of idempotents for all $n \in   \mathbb{N}$.
\end{defn}
We will need the following useful characterizations.
\begin{prop}\cite[Proposition 6.4.1]{Bl} \label{canc}
Let $A$ be a unital Banach algebra. Then the following are equivalent:
\begin{enumerate}
\item $A$ has cancellation of idempotents.
\item If $e,f$ are idempotents in $A$ and $e\sim f$, then $1-e\sim 1-f$.
\item If $e,f$ are idempotents in $A$ and $e\sim f$, then there exists an invertible element $u$ in $A$ such that $ueu^{-1}=f$.
\end{enumerate}
\end{prop}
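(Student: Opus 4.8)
The plan is to prove the cycle (i) $\Rightarrow$ (ii) $\Rightarrow$ (iii) $\Rightarrow$ (i), with essentially all of the genuine content concentrated in the passage from (ii) to (iii) and in the closing implication. The implication (i) $\Rightarrow$ (ii) is immediate once one unwinds the definition of cancellation: given idempotents $e,f$ with $e\sim f$, I would apply the cancellation hypothesis to the quadruple $e,\ g=1-e,\ f,\ h=1-f$. Indeed $e\perp(1-e)$, $f\perp(1-f)$, $e\sim f$, and $e+(1-e)=1\sim 1=f+(1-f)$, so cancellation of idempotents yields $1-e\sim 1-f$, which is exactly (ii).

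Next I would treat (iii) $\Rightarrow$ (ii), the easy half of the similarity picture. If $u$ is invertible with $ueu^{-1}=f$, then $u(1-e)u^{-1}=1-f$, and conjugation by an invertible always implements an algebraic equivalence: taking $x=(1-e)u^{-1}$ and $y=u(1-e)$ gives $xy=1-e$ and $yx=u(1-e)u^{-1}=1-f$, so $1-e\sim 1-f$. The real work is in (ii) $\Rightarrow$ (iii), for which I would use the standard rotation element. Given $e\sim f$, condition (ii) supplies $1-e\sim 1-f$; I would write $e=xy$, $f=yx$ and normalize so that $ex=x=xf$ and $fy=y=ye$ (replacing $x,y$ by $exf,\,fye$), and similarly write $1-e=x'y'$, $1-f=y'x'$ normalized so that $ex'=0=x'f$ and $fy'=0=y'e$. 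Setting $u=x+x'$ and $v=y+y'$, the orthogonality relations force all cross terms to vanish, giving $uv=vu=1$ together with $ufv=e$; hence $u$ is invertible with $u^{-1}eu=f$, and $u^{-1}$ is the element required in (iii). I expect this step to be purely computational once the normalizations are fixed.

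The main obstacle is the closing implication (iii) $\Rightarrow$ (i). Suppose $e\perp g$, $f\perp h$, $e\sim f$, and $p:=e+g\sim q:=f+h$. Applying (iii) to $e\sim f$, I would conjugate by an invertible carrying $e$ to $f$, thereby replacing $g$ by an equivalent idempotent orthogonal to $f$ and reducing to the case $e=f$. The problem then becomes: for a fixed idempotent $q$ and subidempotents $f_1,f\le q$ with $f_1\sim f$, show that the complements satisfy $q-f_1\sim q-f$. This ``cancellation of a common equivalent summand'' — equivalently, the descent of property (iii) to the hereditary corner $qAq$ — is the crux of the argument. As a tool I would invoke (iii) to obtain an invertible $u$ with $uf_1u^{-1}=f$ and pass to the compression $v=quq\in qAq$; one checks directly that $vf_1=fv$ and $v(q-f_1)=(q-f)v$, and that $v$ restricts to an isomorphism $f_1A\cong fA$ with inverse the compression $qu^{-1}q$.

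The delicate point, and the step I expect to require the most care, is to upgrade this compression to an element that is genuinely invertible \emph{inside} the corner $qAq$: the compression handles the $f_1\to f$ direction cleanly but need not be an isomorphism on the complementary summand, and a naive complement argument is circular, since it reproduces verbatim the equivalence $q-f_1\sim q-f$ one is trying to prove. I would break this circularity by running the rotation construction of (ii) $\Rightarrow$ (iii) \emph{internally} to the unital algebra $qAq$, which manufactures an invertible of $qAq$ conjugating $f_1$ to $f$ and hence $q-f_1$ to $q-f$; producing the complementary data needed to feed that construction is precisely the substantive content here, and is where the unital structure and the hereditary corner $qAq$ must genuinely be used.
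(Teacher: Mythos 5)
Your implications (i) $\Rightarrow$ (ii), (ii) $\Rightarrow$ (iii) (the rotation $u=x+x'$, $v=y+y'$ with the stated normalizations), and (iii) $\Rightarrow$ (ii) are all correct; note that the paper itself offers no proof, citing \cite[Proposition 6.4.1]{Bl}, so the relevant comparison is with the standard argument there. The problem is that your cycle is never closed: (iii) $\Rightarrow$ (i) is not actually proved. Your reduction is fine up to the point where the statement becomes: for idempotents $f_1,f\leq q$ with $f_1\sim f$, show $q-f_1\sim q-f$. But your proposed mechanism for this step --- running the rotation construction of (ii) $\Rightarrow$ (iii) internally to the unital algebra $qAq$ --- needs as input \emph{both} $f_1\sim f$ inside $qAq$ (which you do have, since normalized implementing elements $x\in f_1Af$, $y\in fAf_1$ automatically lie in $qAq$) \emph{and} $q-f_1\sim q-f$ inside $qAq$, which is exactly the conclusion being sought. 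You acknowledge this circularity yourself and then state that ``producing the complementary data needed to feed that construction is precisely the substantive content here,'' i.e.\ the key step is left unproven. Property (iii) for $A$ does not descend to the corner $qAq$ for free; that descent is essentially equivalent to the cancellation property one is trying to establish, so as written the argument has a genuine gap.

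The standard way to close the cycle avoids corners entirely and uses the unit of $A$ via one elementary lemma: if $e_1\perp e_2$, $f_1\perp f_2$, and $e_i\sim f_i$ with normalized implementing elements $x_i\in e_iAf_i$, $y_i\in f_iAe_i$, then $(x_1+x_2)(y_1+y_2)=e_1+e_2$ and $(y_1+y_2)(x_1+x_2)=f_1+f_2$ (the cross terms vanish by orthogonality), so $e_1+e_2\sim f_1+f_2$. Granting this, (ii) $\Rightarrow$ (i) takes three lines: given $e\perp g$, $f\perp h$, $e\sim f$, and $e+g\sim f+h$, apply (ii) to the last equivalence to get $1-e-g\sim 1-f-h$; since $e\perp(1-e-g)$ and $f\perp(1-f-h)$, the lemma combined with $e\sim f$ gives $e+(1-e-g)\sim f+(1-f-h)$, that is, $1-g\sim 1-h$; applying (ii) once more yields $g\sim h$. (Equivalently, in your reduced formulation: from $f_1\sim f$ and $f_1,f\perp 1-q$ the lemma gives $f_1+(1-q)\sim f+(1-q)$, and (ii) then gives $q-f_1\sim q-f$.) This augmentation-by-the-complement trick is the missing idea; with it, your remaining implications assemble into a complete proof.
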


The following theorem is an analog of \cite[Theorem 2.2]{lowdimUR}.

\begin{thm} \label{thmAsdim0}
Let $X$ be a metric space with bounded geometry, and let $p\in[1,\infty)$. The following are equivalent:
\begin{enumerate}
\item $X$ has asymptotic dimension zero.
\item $B^p_u(X)$ is an inductive limit of 
subalgebras isometrically isomorphic to $\bigoplus_{k=1}^N M_{d_k}^p$ with norm $||(a_1,\ldots,a_N)||=\max(||a_1||,\ldots,||a_N||)$, where $N,d_1,\ldots,d_k\in\mathbb{N}$, and $M_d^p$ denotes $B(\ell^p(\{1,\ldots,d\}))$.\footnote{This is a non-sequential analogue of a spatial $L^p$ AF algebra in the sense of \cite[Definition~9.1]{PhilViola}.}

\item $B^p_u(X)$ has cancellation.
\end{enumerate}
\end{thm}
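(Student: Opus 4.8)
The plan is to prove the cycle (i) $\Rightarrow$ (ii) $\Rightarrow$ (iii) $\Rightarrow$ (i), since the three implications split naturally according to what data each statement provides. The heart of the matter is (i) $\Rightarrow$ (ii): using Definition \ref{remAsdim}, asymptotic dimension zero means that for every $r\geq 0$ the equivalence relation $\sim_r$ has uniformly bounded equivalence classes. I would first observe that for a fixed $r$, the finite-propagation operators of propagation at most $r$ respect the $\sim_r$-partition, so they live in $\prod_k B(\ell^p(C_k))$ where the $C_k$ are the $\sim_r$-classes; since these classes have uniformly bounded cardinality (say at most $d$), this product is a bounded-geometry direct product of matrix algebras $M_{d_k}^p$ with $d_k\leq d$. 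The key point is that a bounded operator on $\ell^p(X)$ that is block-diagonal with uniformly bounded blocks has operator norm equal to the supremum of the block norms (this uses that $\ell^p$ of a disjoint union decomposes as an $\ell^p$-direct sum and a block-diagonal operator acts coordinatewise), so the embedding is isometric. Letting $r\to\infty$ (along $r=1,2,3,\dots$) gives an increasing family of such subalgebras whose union contains all finite-propagation operators and is therefore dense in $B^p_u(X)$; grouping the uniformly-bounded matrix blocks by their size realizes each stage as a genuine inductive limit of finite direct sums $\bigoplus_{k=1}^N M_{d_k}^p$, matching the footnoted non-sequential spatial $L^p$ AF structure of \cite{PhilViola}.

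For (ii) $\Rightarrow$ (iii), I would argue that cancellation passes to inductive limits of algebras that manifestly have cancellation. Each building block $\bigoplus_{k=1}^N M_{d_k}^p$ has cancellation because it is a finite direct sum of full matrix algebras $M_{d_k}^p=B(\ell^p(\{1,\dots,d_k\}))$, where idempotents are classified up to similarity by rank (equivalently, by the dimension data recorded by the trace $\mathrm{Tr}$), so similarity of idempotents is controlled by an $\mathbb{N}^N$-valued invariant and the cancellation criterion of Proposition \ref{canc} is immediate. The remaining work is to show cancellation is preserved under the inductive limit: given idempotents $e\sim f$ in $M_n(B^p_u(X))$, I would use Proposition \ref{canc}(iii) and approximate $e,f$ and the implementing element by elements in a single stage $M_n(\bigoplus M_{d_k}^p)$, perturb approximate idempotents to genuine idempotents via holomorphic functional calculus, and transport similarities that hold at finite stages up to the limit, controlling norms so that the limiting idempotents $1-e$ and $1-f$ are similar. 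This standard perturbation-and-limit argument is routine but must be done with the $(\varepsilon,r,N)$-type estimates implicit in the $L^p$ setting.

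For (iii) $\Rightarrow$ (i), I would argue the contrapositive. If $X$ does not have asymptotic dimension zero, then by Definition \ref{remAsdim} there is some $r$ for which the $\sim_r$-classes are not uniformly bounded, hence there is an $r$-connected infinite sequence $\{x_n\}$ in $X$. I would then reuse the non-invertible-isometry construction from the proof of Theorem \ref{thmFinite}, (iii) $\Rightarrow$ (iv): the shift operator $S$ with $S\delta_{x_n}=\delta_{x_{n+1}}$ (fixing all other basis vectors) is a finite-propagation isometry in $B^p_u(X)$ with $S$ left-invertible but not invertible. Such an element witnesses a failure of cancellation, because $SS'=1$ for a left inverse $S'$ while $S$ is not invertible yields a proper subidempotent equivalent to $1$, and by Proposition \ref{canc}(ii) cancellation would force $1-1=0\sim 1-e$ for the range idempotent $e=SS'\neq 1$, a contradiction. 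I would phrase this cleanly via Lemma \ref{finite}, noting that cancellation implies finiteness and $B^p_u(X)$ finite forces every $R$-connected component to be finite, which is exactly asymptotic dimension zero.

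The main obstacle I anticipate is the isometry claim in (i) $\Rightarrow$ (ii): verifying that the block-diagonal subalgebra sits inside $B(\ell^p(X))$ isometrically, and more delicately that the resulting non-sequential direct product of uniformly bounded matrix algebras genuinely matches the spatial $L^p$ AF framework of \cite{PhilViola}, requires care about the $\ell^p$-direct-sum norm and the uniform bound on block sizes (so that one stays within finitely many isomorphism types $M_d^p$). The cancellation-under-limits step in (ii) $\Rightarrow$ (iii) is conceptually standard but is where the absence of a $C^*$-algebraic functional calculus in the $L^p$ world forces one to rely on holomorphic functional calculus and explicit norm control rather than the usual spectral arguments.
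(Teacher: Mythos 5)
Your implication (iii) $\Rightarrow$ (i) has a genuine gap, and it is the decisive one. You claim that if $X$ fails to have asymptotic dimension zero, then ``there is an $r$-connected infinite sequence $\{x_n\}$ in $X$,'' and you conclude by asserting that finiteness of $B^p_u(X)$ forces every $R$-connected component to be finite, ``which is exactly asymptotic dimension zero.'' Both steps fail, for the same reason: by Definition \ref{remAsdim}, failure of asymptotic dimension zero only means that for some $r$ the $\sim_r$-classes are not \emph{uniformly} finite; each class may perfectly well be finite. Take $X=\bigsqcup_n F_n$ where $F_n$ is a chain of $n$ points with consecutive distances $1$ and $d(F_n,X\setminus F_n)\geq n$. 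This space has bounded geometry, every $R$-connected component is finite for every $R$ (so $B^p_u(X)$ is finite, indeed stably finite and quasidiagonal, by Theorem \ref{thmFinite}), yet its asymptotic dimension is not zero. Hence cancellation is strictly stronger than finiteness, your shift isometry $S$ simply does not exist for such spaces, and no argument that factors through finiteness can prove (iii) $\Rightarrow$ (i). The paper's proof instead works with exactly this kind of space: by \cite[Lemma 2.4]{lowdimUR} one extracts arbitrarily long finite chains $S_n=\{x_1^{(n)},\ldots,x_{m_n}^{(n)}\}$, increasingly separated, and builds two \emph{diagonal} idempotents $p\sim q$ (characteristic functions of ``everything except the last point of each chain'' resp. ``everything except the first point''), equivalent via finite-propagation shifts along the chains. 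Cancellation would force $1-p\sim 1-q$; but $1-p$ and $1-q$ are supported on the last points $\{x_{m_n}^{(n)}\}$ and the first points $\{x_1^{(n)}\}$ respectively, and if $yz=1-p$, $zy=1-q$ with $y=(1-p)y$, then approximating $y$ by a finite-propagation $a$ with $\|y-a\|<1/\|z\|$ and taking $e$ the characteristic function of $x_1^{(n)}$ for $n$ so large that $x_1^{(n)}$ is at distance greater than $\mathrm{prop}(a)$ from all last points gives $(1-p)ae=0$, whence $\|e\|=\|zye\|\leq\|z\|\,\|(1-p)(y-a)e\|<1$, a contradiction. That propagation argument, not finiteness, is what the failure of cancellation rests on.

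There is a second, smaller gap in your (i) $\Rightarrow$ (ii). At a fixed scale $r$ the block-diagonal algebra is the infinite product $\prod_{A\in I_r}B(\ell^p(A))$, and grouping the blocks by their size only rewrites it as $\bigoplus_{n\leq d}\ell^\infty(I_{r,n},M_n^p)$ with $I_{r,n}=\{A\in I_r:|A|=n\}$ typically infinite; these summands are not matrix algebras, so this step does not produce subalgebras isometrically isomorphic to $\bigoplus_{k=1}^N M_{d_k}^p$. The missing idea is to group blocks by \emph{approximate value} rather than by size: given a finite-propagation $a$ and $\varepsilon>0$, choose a finite $(\varepsilon/2)$-dense subset of the ball of radius $\|a\|$ in each $M_n^p$ (total boundedness), partition $I_r$ into finitely many colours according to which element of the dense set the corresponding block is close to, and consider the subalgebra of block-diagonal operators that are \emph{constant} on each colour. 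Such a subalgebra is honestly isometrically isomorphic to a finite direct sum $\bigoplus_i M_{n_i}^p$, it contains an element within $\varepsilon$ of $a$, and the directed family of all these subalgebras, indexed by pairs $(r,\mathcal{P})$ of a scale and a finite colouring, is the inductive system the paper actually uses. Your isometric-embedding observation and your (ii) $\Rightarrow$ (iii) are fine and agree with the paper's (terser) argument, but both of the above repairs are needed for the proof to stand.
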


\begin{proof}
(i) $\Rightarrow$ (ii): Assume that $X$ has asymptotic dimension zero. Then for each $r>0$, the equivalence classes for the relation $\sim_r$ in Definition \ref{remAsdim} are uniformly finite. We will write $I_r$ for the collection of all equivalence classes for $\sim_r$. Fix a total order on $X$. For each finite subset $A\subset X$, let $f_A:A\rightarrow\{1,\ldots,|A|\}$ be the order isomorphism determined by the total order.

Consider the collection $\mathcal{I}$ of ordered pairs $(r,\mathcal{P})$, where $r>0$ and $\mathcal{P}=\{P_1,\ldots,P_N\}$ is a partition of $I_r$ into finitely many nonempty sets, which we think of as colours, such that equivalence classes with the same colour have the same cardinality.
Fix $(r,\mathcal{P})$. Let $n_1,\ldots,n_N$ be the cardinalities of the sets in each of the colours $P_1,\ldots,P_N$ of $\mathcal{P}$, and let $B=\bigoplus_{i=1}^NM_{n_i}^p$.
For each $A\in I_r$ with colour $P_i$, let $u_{A,i}:\ell^p(\{1,\ldots,|A|\})\rightarrow\ell^p(A)$ be the invertible isometry determined by $f_A$. Define $\phi:B\rightarrow\prod_{A\in I_r}B(\ell^p(A))\subseteq B(\ell^p(X))$ by $(a_i)_{i=1}^N\mapsto\prod_{i=1}^N\prod_{A\in P_i}u_{A,i}a_iu_{A,i}^{-1}$.
The image of $\phi$ is contained in $B^p_u(X)$ since the sets $A\in I_r$ are uniformly bounded, and we denote this image by $A_{r,\mathcal{P}}$.

Define a partial order on $\mathcal{I}$ by $(r,\mathcal{P})\leq(s,\mathcal{Q})$ if $r\leq s$ and $A_{r,\mathcal{P}}\subseteq A_{s,\mathcal{Q}}$. We leave the reader to verify that this is indeed a partial order (or see the proof of (1) $\Rightarrow$ (2) in \cite[Theorem 2.2]{lowdimUR}).

It remains to be shown that the union $\bigcup_{(r,\mathcal{P})\in\mathcal{I}}A_{r,\mathcal{P}}$ is dense in $B^p_u(X)$.
For this, it suffices to show that any finite propagation operator in $B^p_u(X)$ can be approximated by an element of the union.
Let $\varepsilon>0$ and let $a\in B^p_u(X)$ have propagation at most $r$. Then $a$ is contained in $\prod_{A\in I_r}B(\ell^p(A))$, which we identify with $\prod_{A\in I_r}M_{|A|}^p$ using the bijections $f_A$.
Write $a_A$ for the component of $a$ in the relevant copy of $M_{|A|}^p$.
Set $N=\max\{|A|:A\in I_r\}$, and for $n\in\{1,\ldots,N\}$, choose an $(\varepsilon/2)$-dense subset $\{b_{n,1},\ldots,b_{n,m_n}\}$ of the ball of radius $||a||$ in $M_n^p$.
For each $A$, there exists $m(A)\in\{1,\ldots,m_n\}$ such that $||a_A-b_{|A|,m(A)}||<\varepsilon/2$.
Set \[P_{n,m}=\{A\in I_r:|A|=n, m(A)=m\},\] and define \[\mathcal{P}=\{P_{n,m}:n\in\{1,\ldots,N\}, m\in\{1,\ldots,M_n\}, P_{n,m}\neq\emptyset\}.\]
Then the element $b=(b_{|A|,m(A)})_{A\in I_r}$ is in $A_{r,\mathcal{P}}$ and \[||a-b||=\sup_{A\in I_r}||a_A-b_{|A|,m(A)}||<\varepsilon,\] which completes the proof.

(ii) $\Rightarrow$ (iii): This follows from the fact that $M_n^p$ has cancellation for all $n\in\mathbb{N}$, and that cancellation is preserved under taking finite direct sums and taking inductive limits.

(iii) $\Rightarrow$ (i): Assume that the asymptotic dimension of $X$ is not zero. By \cite[Lemma 2.4]{lowdimUR}, there exist $r>0$ and $S_n=\{x_1^{(n)},\ldots,x_{m_n}^{(n)}\}\subset X$ for each $n\geq 1$ with the following properties:
\begin{itemize}
\item $m_n\rightarrow\infty$ as $n\rightarrow\infty$,
\item for each $n$ and each $i\in\{1,\ldots,m_n-1\}$, $d(x_i^{(n)},x_{i+1}^{(n)})\leq 2r$ and $d(x_1^{(n)},x_{i+1}^{(n)})\in[ir,(i+1)r]$,
\item the sequence $(\inf_{m\neq n}d(S_n,S_m))_{n=1}^\infty$ is strictly positive and tends to infinity as $n$ tends to infinity.
\end{itemize}
Define
\begin{align*}
A &= \bigcup_{n=1}^\infty\{x_1^{(n)},\ldots,x_{m_n-1}^{(n)}\}, \\
B &= \bigcup_{n=1}^\infty\{x_2^{(n)},\ldots,x_{m_n}^{(n)}\}, \\
C &= \bigcup_{n=1}^\infty\{x_1^{(n)},\ldots,x_{m_n}^{(n)}\}.
\end{align*}
Let $p$ be the characteristic function of $A\cup(X\setminus C)$ and let $q$ be the characteristic function of $B\cup(X\setminus C)$. Then $p$ and $q$ are equivalent idempotents. Indeed, let $v\in B(\ell^p(X))$ be defined by
\[ \delta_x\mapsto\begin{cases} \delta_x & x\in X\setminus C, \\ \delta_{x_{i+1}^{(n)}} & x=x_i^{(n)}\;\text{for some $n$ and $i\in\{1,\ldots,m_n-1\}$}, \\ 0 & x=x_{m_n}^{(n)}\;\text{for some $n$}, \end{cases} \]
and let $w\in B(\ell^p(X))$ be defined by
\[ \delta_x\mapsto\begin{cases} \delta_x & x\in X\setminus C, \\ \delta_{x_{i-1}^{(n)}} & x=x_i^{(n)}\;\text{for some $n$ and $i\in\{2,\ldots,m_n\}$}, \\ 0 & x=x_1^{(n)}\;\text{for some $n$}. \end{cases} \]
Then $v,w\in B^p_u(X)$, $wv=p$, and $vw=q$.
Now suppose $B^p_u(X)$ has cancellation. Then by Proposition \ref{canc}, $1-p$ and $1-q$ are equivalent idempotents, say $yz=1-p$ and $zy=1-q$ for some $y,z\in B^p_u(X)$. We may also assume that $y=(1-p)y$ (see \cite[Proposition 4.2.2]{Bl}). 
Note that $1-p$ and $1-q$ are, respectively, the characteristic functions of $\{x_{m_n}^{(n)}:n\in\mathbb{N}\}$ and $\{x_1^{(n)}:n\in\mathbb{N}\}$.
There exists $a\in B^p_u(X)$ with finite propagation $s>0$ such that $||y-a||<1/||z||$.
There also exists $n$ such that $d(x_1^{(n)},E)>s$. Let $e\in B^p_u(X)$ be the characteristic function of $x_1^{(n)}$.
Then $(1-p)ae=0$. 
But then $||ye||=||(1-p)ye||=||(1-p)(y-a)e||\leq||y-a||<1/||z||$ so $||e||=||(1-q)e||=||zye||<1$, which is a contradiction. Hence $B^p_u(X)$ does not have cancellation.
\end{proof}

In the rest of this section, we will restrict our attention to the case where the metric space is a countable, locally finite group equipped with a proper left-invariant metric. Such a metric group is actually a bounded geometry metric space with asymptotic dimension zero. The main result is that the ordered $K_0$ group (with order unit) of the associated $\ell^p$ uniform Roe algebra is a complete invariant for the bijective coarse equivalence class of the underlying countable locally finite group. Along the way, we also compute $(K_0(B^p_u(\Gamma)),K_0(B^p_u(\Gamma))^+,[1]_0)$ for a countable, locally finite group $\Gamma$, showing that it is independent of $p$.

\begin{defn}
A discrete group $\Gamma$ is locally finite if every finitely generated subgroup of $\Gamma$ is finite.
\end{defn}
Every countable discrete group can be equipped with a proper left-invariant metric $d$ that is unique up to bijective coarse equivalence \cite[Lemma
2.1]{MR1871980}. In fact, local finiteness of a countable group $\Gamma$ can be characterized in terms of the asymptotic dimension of the metric space $(\Gamma, d)$ as follows.

\begin{thm} \cite[Theorem 2]{Smith}
Let $\Gamma$ be a countable group equipped with any proper left-invariant metric $d$. Then the following are equivalent:
\begin{enumerate}
\item $\Gamma$ is locally finite.
\item $(\Gamma,d)$ has asymptotic dimension zero.
\end{enumerate}
\end{thm}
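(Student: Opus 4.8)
The plan is to work directly with the reformulation of asymptotic dimension zero from Definition \ref{remAsdim}: the space $(\Gamma,d)$ has asymptotic dimension zero if and only if, for every $r\geq 0$, the equivalence relation $\sim_r$ generated by $\{(x,y):d(x,y)\leq r\}$ has uniformly finite equivalence classes. The heart of the argument is to identify, for each fixed $r$, the $\sim_r$-equivalence class of the identity $e$ with the subgroup of $\Gamma$ generated by the closed $r$-ball $B_r(e)$, and then to exploit left-invariance to transport this identification to every class.

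First I would record two structural facts. Since $d$ is proper, every ball $B_r(e)=\{g\in\Gamma:d(e,g)\leq r\}$ is finite, and since $d$ is left-invariant one checks $d(e,g)=d(e,g^{-1})$, so $B_r(e)$ is a finite symmetric subset containing $e$. Next I would show that the $\sim_r$-class of $e$ equals $\langle B_r(e)\rangle$. For the inclusion $\subseteq$, a chain $e=x_0,x_1,\dots,x_n=y$ with $d(x_i,x_{i+1})\leq r$ rewrites, using left-invariance, as $y=g_1g_2\cdots g_n$ with $g_i=x_{i-1}^{-1}x_i\in B_r(e)$; for $\supseteq$, a product $g_1\cdots g_n$ of elements of $B_r(e)$ is joined to $e$ by the chain $e,g_1,g_1g_2,\dots$, whose consecutive distances $d(g_1\cdots g_i,\,g_1\cdots g_{i+1})=d(e,g_{i+1})\leq r$ are again controlled by left-invariance.

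The same left-invariance then shows $x\sim_r y\iff x^{-1}y\in\langle B_r(e)\rangle$, so that the class of an arbitrary $g$ is the coset $g\langle B_r(e)\rangle$; in particular all $\sim_r$-classes have the common cardinality $|\langle B_r(e)\rangle|$. This homogeneity means the ``uniformly finite'' requirement is automatic once a single class is finite, and it is what lets the two implications fall out. For (i)$\Rightarrow$(ii): if $\Gamma$ is locally finite then $\langle B_r(e)\rangle$ is a subgroup generated by the finite set $B_r(e)$, hence finite, so every $\sim_r$-class is finite of the same size and asymptotic dimension zero follows. For (ii)$\Rightarrow$(i): given a finitely generated subgroup $H=\langle h_1,\dots,h_k\rangle$, set $r=\max_i d(e,h_i)$ so that each $h_i\in B_r(e)$ and hence $H\subseteq\langle B_r(e)\rangle$, which is the (finite, by asymptotic dimension zero) $\sim_r$-class of $e$; thus $H$ is finite.

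I expect no serious obstacle here: the only points requiring care are the bookkeeping in the chain-to-product translation and the verification that $B_r(e)$ is symmetric, both of which are immediate from left-invariance. The essential inputs are precisely the two hypotheses on the metric, namely properness (finiteness of balls) and left-invariance (homogeneity of the components), and the proof is otherwise a direct unwinding of the definitions.
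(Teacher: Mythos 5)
Your proof is correct. Note, however, that the paper does not prove this theorem at all: it is imported as a black box, cited to Smith. So there is no ``paper's proof'' to compare against; what you have done is supply the standard self-contained argument for the cited fact, and it holds up. The key identification --- that for each $r$ the $\sim_r$-equivalence class of the identity is exactly the subgroup $\langle B_r(e)\rangle$, and hence by left-invariance every class is a coset $g\langle B_r(e)\rangle$ --- is exactly the right mechanism, and both implications then fall out as you describe: local finiteness makes $\langle B_r(e)\rangle$ finite (it is generated by the finite symmetric set $B_r(e)$), giving uniformly finite classes; conversely any finitely generated subgroup sits inside $\langle B_r(e)\rangle$ for $r$ large enough, which is finite when the classes are. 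One small point worth making explicit: the reformulation of asymptotic dimension zero in Definition~\ref{remAsdim} is stated for bounded geometry spaces, so you should record that $(\Gamma,d)$ has bounded geometry; this is immediate from the same two hypotheses you isolate, since properness gives $|B_R(e)|<\infty$ and left-invariance gives $|B_R(x)|=|B_R(e)|$ for all $x$. With that one-line addition the argument is complete.
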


We may then apply Theorem \ref{thmAsdim0}, Theorem \ref{thmFinite}, and a couple of straightforward observations to obtain the following result, which is an analog of \cite[Corollary 5.4]{LL} (see also \cite{MR3784048}).

\begin{cor}
Let $\Gamma$ be a countable group equipped with any proper left-invariant metric, and let $p\in[1,\infty)$. Then the following are equivalent:
\begin{enumerate}
\item $\Gamma$ is locally finite.
\item $B^p_u(\Gamma)$ is an inductive limit of subalgebras isometrically isomorphic to $\bigoplus_{k=1}^N M_{d_k}^p$ with norm $||(a_1,\ldots,a_N)||=\max(||a_1||,\ldots,||a_N||)$, where $N,d_1,\ldots,d_k\in\mathbb{N}$, and $M_d^p$ denotes $B(\ell^p(\{1,\ldots,d\}))$.
\item $B^p_u(\Gamma)$ has cancellation.
\item $B^p_u(\Gamma)$ is stably finite.
\item $B^p_u(\Gamma)$ is finite.
\item $B^p_u(\Gamma)$ is a quasidiagonal set in $B(\ell^p(\Gamma))$.
\end{enumerate}
\end{cor}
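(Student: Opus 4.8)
The plan is to treat this corollary as a packaging result: conditions (2) and (3) sit inside Theorem \ref{thmAsdim0}, conditions (4), (5), (6) sit inside Theorem \ref{thmFinite}, and the only real work is to connect these two clusters to local finiteness. First I would record the standing observation that $(\Gamma, d)$ is a bounded geometry metric space: properness of $d$ forces every closed ball to be finite, while left-invariance gives $|B_R(g)| = |B_R(e)|$ for all $g$, so the bound $N_R := |B_R(e)|$ is uniform. This makes both Theorem \ref{thmAsdim0} and Theorem \ref{thmFinite} applicable with $X = \Gamma$.

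With bounded geometry in hand, Smith's theorem (\cite{Smith}) gives that (1) is equivalent to ``$(\Gamma,d)$ has asymptotic dimension zero'', and then Theorem \ref{thmAsdim0} upgrades this to (1) $\Leftrightarrow$ (2) $\Leftrightarrow$ (3). It remains to fold in (4), (5), (6), which by Theorem \ref{thmFinite} are mutually equivalent and equivalent to the condition that for every $R>0$ each $R$-connected component of $\Gamma$ is finite. So the crux is to identify this ``each component finite'' condition with asymptotic dimension zero for a \emph{homogeneous} space.

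Here is the observation doing that. The $R$-connected components are exactly the classes of the relation $\sim_R$ of Definition \ref{remAsdim}, and each left translation $L_h : x \mapsto hx$ is an isometry (by left-invariance) carrying $R$-chains to $R$-chains; hence it permutes the $R$-connected components and sends the component of $e$ onto the component of $h$. Consequently all $R$-connected components are translates of the component of $e$ and therefore have the same cardinality. So for $\Gamma$ the statements ``every $R$-connected component is finite'' and ``the $\sim_R$-classes are uniformly finite'' coincide, and by Definition \ref{remAsdim} the latter (for all $R$) is precisely asymptotic dimension zero. Thus condition (iv) of Theorem \ref{thmFinite} is equivalent to asymptotic dimension zero, and Theorem \ref{thmFinite} then yields (4) $\Leftrightarrow$ (5) $\Leftrightarrow$ (6), closing the chain.

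I do not expect a genuine obstacle; the entire content is in the homogeneity remark, which is exactly where the group structure removes the finite-versus-uniformly-finite gap that is real for general bounded geometry spaces (indeed the space built in the proof of (iii) $\Rightarrow$ (i) of Theorem \ref{thmAsdim0} has finite but not uniformly finite $R$-connected components). If one prefers to bypass Smith's theorem, the same bridge can be obtained purely group-theoretically: the $R$-connected component of $e$ equals the subgroup $\langle B_R(e)\rangle$ generated by the finite ball, so ``every $R$-connected component finite for all $R$'' says exactly that every finitely generated subgroup of $\Gamma$ is finite, i.e.\ local finiteness.
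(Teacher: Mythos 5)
Your proposal is correct, but it bridges the two clusters differently from the paper. The paper also gets (1) $\Leftrightarrow$ (2) $\Leftrightarrow$ (3) from Theorem \ref{thmAsdim0} (via Smith's theorem) and (4) $\Leftrightarrow$ (5) $\Leftrightarrow$ (6) from Theorem \ref{thmFinite}, but it then closes the loop \emph{algebraically}: it proves (3) $\Rightarrow$ (4) by showing that any unital Banach algebra with cancellation is stably finite (via Lemma \ref{finite}(ii) and Proposition \ref{canc}(iii): if $ab=1$ then $ba\sim 1$, and cancellation makes $ba$ conjugate to $1$, hence equal to $1$), and it proves (5) $\Rightarrow$ (1) by contrapositive, using that a non-locally-finite $\Gamma$ contains a quasi-isometrically embedded copy of $\mathbb{Z}$, so the non-finite algebra $B^p_u(\mathbb{Z})$ sits inside $B^p_u(\Gamma)$. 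You instead stay entirely on the coarse-geometric side: left translations are isometries permuting the $R$-connected components, so all components are translates of the component of $e$ and have the same cardinality; hence condition (iv) of Theorem \ref{thmFinite} (\enquote{all components finite}) coincides, for a group, with the uniform finiteness in Definition \ref{remAsdim}, i.e.\ with asymptotic dimension zero. Both arguments are sound. Yours has the virtue of isolating exactly where the group structure enters --- it closes the finite-versus-uniformly-finite gap, which is genuinely there for general bounded geometry spaces --- and your subgroup identification (the $R$-component of $e$ is $\langle B_R(e)\rangle$, using that $B_R(e)$ is symmetric and finite by properness) even makes the corollary independent of Smith's theorem. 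The paper's route, on the other hand, produces a reusable Banach-algebra fact (cancellation implies stable finiteness, with no group hypothesis) and avoids the homogeneity discussion, at the cost of invoking the embedding of $\mathbb{Z}$, which needs a separate justification.
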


\begin{proof}
The equivalence of (i), (ii), and (iii) follows from Theorem \ref{thmAsdim0}, while the equivalence of (iv), (v), and (vi) follows from Theorem \ref{thmFinite}. 
The fact that any unital Banach algebra with cancellation will be stably finite can be seen easily using Lemma \ref{finite}(ii) and Proposition \ref{canc}(iii), so we have (iii) $\Rightarrow$ (iv).
Finally, to get (v) $\Rightarrow$ (i), note that if $\Gamma$ is not locally finite, then $\mathbb{Z}$ quasi-isometrically embeds into $\Gamma$ so that $B^p_u(\mathbb{Z})\subset B^p_u(\Gamma)$. However, $B^p_u(\mathbb{Z})$ is not finite so $B^p_u(\Gamma)$ is not finite as well.
\end{proof}

In the following we will compute the ordered $K_0$ group of $B^p_u(\Gamma)$ when $\Gamma$ is a countable locally finite group. Recall that a countable group $\Gamma$ is locally finite if and only if there exists an increasing sequence \[ \{e\}=\Gamma_0\subseteq\Gamma_1\subseteq\Gamma_2\subseteq\cdots \] of finite subgroups of $\Gamma$ such that $\Gamma=\bigcup_{n=0}^\infty\Gamma_n$. Such a sequence of finite subgroups gives rise to a proper left-invariant metric $d$ on $\Gamma$ given by $d(g,h)=\min\{n\in\mathbb{N}:g^{-1}h\in\Gamma_n\}$ for $g,h\in\Gamma$.

The following proposition can be proved in the same way as in the $C^\ast$-algebra case and we omit the proof, referring the reader to \cite[Proposition 4.5]{LL} for details.

\begin{prop} \label{grpRoe}
Let $\Gamma$ be a countable, locally finite group. Consider the triple $(\Gamma,\{\Gamma_n\}_{n=0}^\infty,d)$ as above.
For each $n\in\{0,1,2,\ldots\}$, define $k_n=|\Gamma_n|$ and $r_n=k_{n+1}/k_n$. Then for every $p\in[1,\infty)$,
\[ B^p_u(\Gamma)\cong\varinjlim\left(\prod_{i=1}^\infty M^p_{k_n}(\mathbb{C}),\phi_n\right), \]
where $\phi_n(T_1,T_2,\ldots)=(\mathrm{diag}(T_1,\ldots,T_n),\mathrm{diag}(T_{r_n+1},\ldots,T_{2r_n}),\ldots)$.
\end{prop}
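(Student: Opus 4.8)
The plan is to realize each Banach algebra $A_n:=\prod_{i=1}^\infty M^p_{k_n}(\mathbb{C})$ concretely as the algebra of operators on $\ell^p(\Gamma)$ of propagation at most $n$, to check that the inclusions $A_n\hookrightarrow A_{n+1}$ are isometric and take the stated form $\phi_n$, and finally to observe that $\bigcup_n A_n$ is precisely the algebra of finite-propagation operators, whose closure is $B^p_u(\Gamma)$ by definition.

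First I would unpack the metric. Since $d(g,h)=\min\{m:g^{-1}h\in\Gamma_m\}$, one has $d(g,h)\le n$ if and only if $h\in g\Gamma_n$; thus the relation ``$d(\cdot,\cdot)\le n$'' partitions $\Gamma$ into the left cosets of $\Gamma_n$, each of cardinality $k_n$, and there are countably many of them. An operator $T\in B(\ell^p(\Gamma))$ has propagation at most $n$ exactly when it is block diagonal with respect to the $\ell^p$-direct-sum decomposition $\ell^p(\Gamma)=\bigoplus_{c\in\Gamma/\Gamma_n}\ell^p(c)$. Writing $T=\bigoplus_c T_c$ with $T_c\in B(\ell^p(c))$ and choosing an ordering of each coset $c$ identifies $B(\ell^p(c))$ with $M^p_{k_n}(\mathbb{C})$. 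The key analytic point is the standard fact that for a block-diagonal operator on an $\ell^p$ direct sum one has $\|\bigoplus_c T_c\|=\sup_c\|T_c\|$: the inequality $\le$ is immediate from $\|T\xi\|^p=\sum_c\|T_c\xi_c\|^p$, and the reverse follows by concentrating a unit vector on a single block. This gives an isometric identification of the propagation-at-most-$n$ operators with $\prod_{i=1}^\infty M^p_{k_n}(\mathbb{C})$ carrying the supremum norm.

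Next I would compare two successive levels. Because $\Gamma_n\subseteq\Gamma_{n+1}$, each left coset of $\Gamma_{n+1}$ is a disjoint union of exactly $r_n=k_{n+1}/k_n$ left cosets of $\Gamma_n$. An operator of propagation at most $n$ is a fortiori of propagation at most $n+1$; viewed on a single $\Gamma_{n+1}$-coset $C=c_1\sqcup\cdots\sqcup c_{r_n}$, it is the block-diagonal operator $\mathrm{diag}(T_{c_1},\ldots,T_{c_{r_n}})$ on $\ell^p(C)=\bigoplus_j\ell^p(c_j)$. Enumerating the $\Gamma_n$-cosets and the $\Gamma_{n+1}$-cosets compatibly, so that the subcosets of the $i$-th $\Gamma_{n+1}$-coset are the $\Gamma_n$-cosets numbered $(i-1)r_n+1,\ldots,ir_n$, the inclusion takes exactly the stated form $\phi_n(T_1,T_2,\ldots)=(\mathrm{diag}(T_1,\ldots,T_{r_n}),\mathrm{diag}(T_{r_n+1},\ldots,T_{2r_n}),\ldots)$. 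Applying the block-diagonal norm formula twice yields $\|\phi_n(T)\|=\sup_C\max_j\|T_{c_j}\|=\sup_c\|T_c\|=\|T\|$, so each $\phi_n$ is an isometric unital homomorphism.

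Finally, since the metric is proper and integer-valued, every finite-propagation operator lies in some $A_n$, so $\bigcup_n A_n$ equals the algebra of finite-propagation operators on $\ell^p(\Gamma)$, whose norm closure is $B^p_u(\Gamma)$ by definition. As the $\phi_n$ are isometric, the Banach-algebra inductive limit $\varinjlim(A_n,\phi_n)$ is the completion of $\bigcup_n A_n$, which is precisely $B^p_u(\Gamma)$. I expect the only real obstacle to be bookkeeping: arranging compatible enumerations of the coset spaces $\Gamma/\Gamma_n$ and $\Gamma/\Gamma_{n+1}$ so that the abstract inclusion is literally $\phi_n$. The analytic content is entirely contained in the elementary $\ell^p$ block-diagonal norm identity, which is where (and the only place) $p$ enters, and which is exactly what makes the description uniform in $p\in[1,\infty)$.
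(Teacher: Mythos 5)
Your proof is correct and is essentially the argument the paper itself invokes (the paper omits the proof, deferring to the $p=2$ case in \cite[Proposition 4.5]{LL} with the remark that the same argument works for all $p$): identify the propagation-at-most-$n$ operators with the block-diagonal operators over the left cosets of $\Gamma_n$, note that the inclusions between successive levels are the maps $\phi_n$ under compatible coset enumerations, and use the $\ell^p$ block-diagonal norm identity --- the only point where $p$ enters --- to see everything is isometric. Note also that your formula $\mathrm{diag}(T_1,\ldots,T_{r_n})$ for the first block silently corrects what is evidently a typo in the statement of the proposition, which reads $\mathrm{diag}(T_1,\ldots,T_n)$.
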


From Proposition \ref{grpRoe} and \cite[Proposition 4.1]{LL}, one sees that the ordered $K_0$ group of $B^p_u(\Gamma)$ is a sequential inductive limit of $\ell^\infty(\mathbb{N},\mathbb{Z})$ when $\Gamma$ is a countable, locally finite group.
Moreover, it is not hard to see that the connecting homomorphism $\phi_n$ ($n=0,1,2,\ldots$) induces the following map on $K_0$ groups:
\[ \alpha_n:\ell^\infty(\mathbb{N},\mathbb{Z})\rightarrow\ell^\infty(\mathbb{N},\mathbb{Z}), \]
\[ \alpha_n((m_1,m_2,\ldots))=(m_1+\cdots+m_{r_n},m_{r_n+1}+\cdots+m_{2r_n},\ldots). \]
Continuity of the $K_0$ functor then gives \[ (K_0(B^p_u(\Gamma)),K_0(B^p_u(\Gamma))^+)\cong\varinjlim(\ell^\infty(\mathbb{N},\mathbb{Z}),\ell^\infty(\mathbb{N},\mathbb{Z})^+,\alpha_n). \]
We now proceed to describe the inductive limit more explicitly. Define
\begin{align*} 
H_\Gamma^{(n)}&=\left\{ (m_1,m_2,\ldots)\in\ell^\infty(\mathbb{N},\mathbb{Z}):\sum_{i=jk_n+1}^{(j+1)k_n}m_i=0\;\text{for all}\;j=0,1,2,\ldots \right\}, \\
H_\Gamma &= \bigcup_{n=0}^\infty H_\Gamma^{(n)}.
\end{align*}
Note that $\{H_\Gamma^{(n)}\}_{n=0}^\infty$ is an increasing sequence of subgroups of $\ell^\infty(\mathbb{N},\mathbb{Z})$ since $k_n$ divides $k_{n+1}$ for each $n$.

\begin{prop} \cite[Proposition 4.6]{LL}
Let $\alpha_n$ and $H_\Gamma$ be as above. Then
\[ \varinjlim(\ell^\infty(\mathbb{N},\mathbb{Z}),\ell^\infty(\mathbb{N},\mathbb{Z})^+,\alpha_n)\cong(\ell^\infty(\mathbb{N},\mathbb{Z})/H_\Gamma,\ell^\infty(\mathbb{N},\mathbb{Z})^+/H_\Gamma), \]
where $\ell^\infty(\mathbb{N},\mathbb{Z})^+/H_\Gamma$ is the collection of all elements in $\ell^\infty(\mathbb{N},\mathbb{Z})/H_\Gamma$ that can be represented by positive sequences.
\end{prop}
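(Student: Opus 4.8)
The plan is to realise the isomorphism concretely through the canonical map into the inductive limit from the initial copy of $\ell^\infty(\mathbb{N},\mathbb{Z})$. Write $G=\ell^\infty(\mathbb{N},\mathbb{Z})$ and $G^+=\ell^\infty(\mathbb{N},\mathbb{Z})^+$, and regard the system as copies $G_n=G$ ($n\geq 0$) with connecting maps $\alpha_n\colon G_n\to G_{n+1}$ and structure maps $\iota_n\colon G_n\to\varinjlim G_m$. The first thing I would record is that the iterated connecting map $\alpha^{(n)}:=\alpha_{n-1}\circ\cdots\circ\alpha_0\colon G_0\to G_n$ is exactly the map summing the entries of each consecutive block of length $r_0r_1\cdots r_{n-1}=k_n$ (using $k_0=|\Gamma_0|=1$). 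Reading off the defining condition for $H_\Gamma^{(n)}$, one sees immediately that $\ker\alpha^{(n)}=H_\Gamma^{(n)}$, and hence $\bigcup_n\ker\alpha^{(n)}=H_\Gamma$.

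Next I would verify that each $\alpha_n$ is surjective: given a target sequence, place its $j$-th entry in the first slot of the $j$-th block and fill the rest of that block with zeros. This lift is bounded precisely because the target is, and it is non-negative whenever the target is. Surjectivity of every $\alpha_n$, via $\iota_n=\iota_{n+1}\circ\alpha_n$, forces $\operatorname{im}\iota_n=\operatorname{im}\iota_{n+1}$ for all $n$, so $\varinjlim G_m=\operatorname{im}\iota_0$; that is, $\iota_0$ is onto. Combined with the standard fact that $\ker\iota_0=\bigcup_n\ker\alpha^{(n)}$ (an element dies in the limit iff it is already killed at some finite stage), which equals $H_\Gamma$ by the first paragraph, the first isomorphism theorem produces a group isomorphism $\theta\colon G/H_\Gamma\xrightarrow{\cong}\varinjlim G_m$ induced by $\iota_0$.

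It then remains to match the positive cones, and this is the one step carrying genuine content beyond formal bookkeeping. The positive cone of the inductive limit is by definition $\bigcup_n\iota_n(G^+)$, while $\theta$ carries the candidate cone $G^+/H_\Gamma$ (the classes representable by non-negative sequences) onto $\iota_0(G^+)$. The inclusion $\iota_0(G^+)\subseteq\bigcup_n\iota_n(G^+)$ is trivial. For the reverse inclusion, given $y\in G^+=G_n^+$, the positivity-preserving lift from the surjectivity step yields $x\in G_0^+$ with $\alpha^{(n)}(x)=y$; since $\iota_0=\iota_n\circ\alpha^{(n)}$ we obtain $\iota_n(y)=\iota_0(x)\in\iota_0(G^+)$. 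Hence $\bigcup_n\iota_n(G^+)=\iota_0(G^+)$, so $\theta$ is an isomorphism of ordered groups, with $\ell^\infty(\mathbb{N},\mathbb{Z})^+/H_\Gamma$ identified as the classes represented by positive sequences, exactly as claimed.

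I expect the argument to be essentially formal once the two structural facts are in hand, namely $\ker\alpha^{(n)}=H_\Gamma^{(n)}$ and the surjectivity of $\alpha_n$ with a \emph{non-negative} lift. Accordingly, the main point demanding attention is not the group isomorphism but the order-cone matching of the last paragraph: one must confirm that every positive element of the limit already lifts to a non-negative sequence at the initial stage, which is precisely what the order-preserving lift supplies. This parallels the $C^\ast$-algebraic computation of \cite[Proposition 4.6]{LL}, the ordered-group argument being manifestly independent of $p$.
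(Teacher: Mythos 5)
Your proof is correct and complete: the identification $\ker\alpha^{(n)}=H_\Gamma^{(n)}$, the surjectivity of the connecting maps via bounded non-negative lifts, and the resulting collapse of the limit onto $\operatorname{im}\iota_0$ with matching positive cones together give exactly the stated isomorphism of ordered groups. The paper itself offers no proof here (it cites \cite[Proposition 4.6]{LL}), and your argument is the standard one that reference carries out, so there is nothing of substance to contrast.
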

Now we can explicitly describe the ordered $K_0$ group of $B^p_u(\Gamma)$ for any countable locally finite group $\Gamma$ (see \cite[Theorem 4.7]{LL} for $p=2$ case):
\begin{thm}\label{k-theory of locally finite}
Let $\Gamma$ be a countable, locally finite group, and let $\{e\}=\Gamma_0\subseteq\Gamma_1\subseteq\Gamma_2\subseteq\cdots$ be an increasing sequence of finite subgroups of $\Gamma$ with $\Gamma=\bigcup_{n=0}^\infty\Gamma_n$. Define $k_n=|\Gamma_n|$,
\begin{align*} 
H_\Gamma^{(n)}&=\left\{ (m_1,m_2,\ldots)\in\ell^\infty(\mathbb{N},\mathbb{Z}):\sum_{i=jk_n+1}^{(j+1)k_n}m_i=0\;\text{for all}\;j=0,1,2,\ldots \right\}, \\
H_\Gamma &= \bigcup_{n=0}^\infty H_\Gamma^{(n)}.
\end{align*}
Then for every $p\in[1,\infty)$, \[ (K_0(B^p_u(\Gamma)),K_0(B^p_u(\Gamma))^+,[1]_0)\cong(\ell^\infty(\mathbb{N},\mathbb{Z})/H_\Gamma,\ell^\infty(\mathbb{N},\mathbb{Z})^+/H_\Gamma,[\mathbf{1}]), \]
where $\mathbf{1}$ denotes the constant sequence with value 1.

In particular, the ordered $K_0$ group of $B^p_u(\Gamma)$ for a countable, locally finite group $\Gamma$ does not depend on $p$.
\end{thm}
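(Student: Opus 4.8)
The plan is to assemble the ordered-group computation already carried out in the paragraph preceding the theorem and then account for the order unit and for the role of $p$. By Proposition \ref{grpRoe}, $B^p_u(\Gamma)$ is the inductive limit of the unital system $\left(\prod_{i=1}^\infty M^p_{k_n}(\mathbb{C}),\phi_n\right)$, and the discussion before the theorem records that continuity of $K_0$ together with \cite[Proposition 4.1]{LL} gives $(K_0(B^p_u(\Gamma)),K_0(B^p_u(\Gamma))^+)\cong\varinjlim(\ell^\infty(\mathbb{N},\mathbb{Z}),\ell^\infty(\mathbb{N},\mathbb{Z})^+,\alpha_n)$, which \cite[Proposition 4.6]{LL} identifies with $(\ell^\infty(\mathbb{N},\mathbb{Z})/H_\Gamma,\ell^\infty(\mathbb{N},\mathbb{Z})^+/H_\Gamma)$. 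So the only genuinely new points to verify are that this isomorphism sends $[1]_0$ to $[\mathbf{1}]$ and that nothing in the target involves $p$.

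First I would pin down how the limit is identified with the quotient. The composite connecting map $\alpha_{n-1}\circ\cdots\circ\alpha_0$ on the $K_0$ side sums a sequence over consecutive blocks of length $k_n$, so its kernel is exactly $H_\Gamma^{(n)}$; since this composite is surjective, the stage-$0$ structure map $\ell^\infty(\mathbb{N},\mathbb{Z})\to\varinjlim$ is surjective with kernel $\bigcup_n H_\Gamma^{(n)}=H_\Gamma$, and this descent is the isomorphism of \cite[Proposition 4.6]{LL}. Next I would compute the order unit through this identification. Because $\Gamma_0=\{e\}$ forces $k_0=1$, the stage-$0$ algebra is $\prod_{i=1}^\infty M^p_1(\mathbb{C})$, whose unit has $K_0$-class the constant sequence $\mathbf{1}=(1,1,\ldots)$; unitality of the $\phi_n$ forces this class to map to the unit class $(k_n,k_n,\ldots)$ at every later stage (consistently, since $\alpha_n((k_n,k_n,\ldots))=(r_nk_n,\ldots)=(k_{n+1},\ldots)$). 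Hence $[1]_0$ is represented at stage $0$ by $\mathbf{1}$ and corresponds to $[\mathbf{1}]\in\ell^\infty(\mathbb{N},\mathbb{Z})/H_\Gamma$, which upgrades the previous isomorphism to one of ordered groups with order unit.

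Finally, independence of $p$ would follow by observing that every ingredient on the $K$-theory side is $p$-free: $K_0(M^p_d(\mathbb{C}))\cong\mathbb{Z}$ via the rank of an idempotent for all $p\in[1,\infty)$, so $K_0\big(\prod_i M^p_{k_n}(\mathbb{C})\big)\cong\ell^\infty(\mathbb{N},\mathbb{Z})$ with the same positive cone, the induced maps are the block-sum maps $\alpha_n$, and both the order unit $\mathbf{1}$ and $H_\Gamma$ are defined solely from the integers $k_n=|\Gamma_n|$. I do not expect a serious obstacle: the analytic content (the inductive-limit description of $B^p_u(\Gamma)$ and the $p$-independence of the $K_0$ of $L^p$ matrix algebras) is supplied by Proposition \ref{grpRoe} and the cited results of \cite{LL,Chung1}, so what remains is the bookkeeping of the order unit. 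The one point deserving care is that the isomorphism of \cite[Proposition 4.6]{LL} be realized through the stage-$0$ structure map rather than some other cofinal stage, since it is only at $n=0$ that the unit is literally the constant sequence $\mathbf{1}$.
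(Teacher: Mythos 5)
Your proposal is correct and follows essentially the same route as the paper's proof: both reduce to the ordered-group identification $(K_0(B^p_u(\Gamma)),K_0(B^p_u(\Gamma))^+)\cong(\ell^\infty(\mathbb{N},\mathbb{Z})/H_\Gamma,\ell^\infty(\mathbb{N},\mathbb{Z})^+/H_\Gamma)$ already assembled before the theorem via Proposition \ref{grpRoe} and \cite[Proposition 4.6]{LL}, and then track the order unit through the stage-$0$ structure map, using $k_0=1$ to see that $[1]_0$ is represented by the constant sequence $\mathbf{1}$ and that the structure map is the quotient map. Your extra verification that the composite connecting maps have kernel $H_\Gamma^{(n)}$ and are surjective is exactly the content the paper delegates to the cited proof of \cite[Proposition 4.6]{LL}, so nothing is missing.
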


\begin{proof}
From the preceding result and remarks prior to that, we now only need to keep track of the order unit $[1]_0$.
The $K_0$ class of the unit of $\prod_{i=1}^\infty\mathbb{C}$ in $\ell^\infty(\mathbb{N},\mathbb{Z})$ is given by the constant 1 sequence. The result then follows since the structure map $\ell^\infty(\mathbb{N},\mathbb{Z})\rightarrow\ell^\infty(\mathbb{N},\mathbb{Z})/H_\Gamma$ for the inductive limit is simply the quotient map (cf. the proof of \cite[Proposition 4.6]{LL}).
\end{proof}
The following corollary generalizes \cite[Corollary 4.9]{LL}:
\begin{cor}\label{ce+lf}
Let $\Gamma$ and $\Lambda$ be countable, locally finite groups with proper left-invariant metrics $d_\Gamma$ and $d_\Lambda$ respectively. The following are equivalent:
\begin{enumerate}
\item[(1)] $(\Gamma,d_\Gamma)$ and $(\Lambda,d_\Lambda)$ are coarsely equivalent. 
\item[(2)] $K_0(B^p_u(\Gamma))\cong K_0(B^p_u(\Lambda))$ for all $p\in[1,\infty)$.
\item[(3)] $K_0(B^p_u(\Gamma))\cong K_0(B^p_u(\Lambda))$ for some $p\in[1,\infty)$.
\end{enumerate}
\end{cor}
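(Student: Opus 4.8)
The plan is to let Theorem~\ref{k-theory of locally finite} carry essentially all the weight: it computes $K_0(B^p_u(\Gamma))$ explicitly and, decisively, exhibits it as an abstract abelian group that does not depend on $p$. Concretely, for every $p\in[1,\infty)$ we have $K_0(B^p_u(\Gamma))\cong\ell^\infty(\mathbb{N},\mathbb{Z})/H_\Gamma$ (and likewise $K_0(B^p_u(\Lambda))\cong\ell^\infty(\mathbb{N},\mathbb{Z})/H_\Lambda$). Since conditions (2) and (3) refer only to the underlying abelian group, forgetting the positive cone and the order unit, the entire corollary should reduce to a combination of this $p$-independence with the already-known $p=2$ statement from \cite[Corollary 4.9]{LL}.

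First I would dispatch the equivalence of (2) and (3). The implication (2) $\Rightarrow$ (3) is trivial. For (3) $\Rightarrow$ (2), I would invoke the identification above: because $K_0(B^p_u(\Gamma))\cong\ell^\infty(\mathbb{N},\mathbb{Z})/H_\Gamma$ is literally the same group for all $p$, an isomorphism $K_0(B^p_u(\Gamma))\cong K_0(B^p_u(\Lambda))$ for one value of $p$ is exactly the group isomorphism $\ell^\infty(\mathbb{N},\mathbb{Z})/H_\Gamma\cong\ell^\infty(\mathbb{N},\mathbb{Z})/H_\Lambda$, which then transports back to every $p$. Thus (2) $\Leftrightarrow$ (3), and both are equivalent to the purely group-theoretic statement $\ell^\infty(\mathbb{N},\mathbb{Z})/H_\Gamma\cong\ell^\infty(\mathbb{N},\mathbb{Z})/H_\Lambda$.

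It then remains to connect this group-theoretic condition with the metric condition (1), and here I would specialize to $p=2$, where $B^2_u(\Gamma)$ is the usual $C^\ast$ uniform Roe algebra. By the previous step, condition (3) is equivalent to $K_0(B^2_u(\Gamma))\cong K_0(B^2_u(\Lambda))$, and the equivalence of \emph{that} with coarse equivalence of $(\Gamma,d_\Gamma)$ and $(\Lambda,d_\Lambda)$ is precisely \cite[Corollary 4.9]{LL}; combining with (2) $\Leftrightarrow$ (3) closes the argument. I would also remark that (1) is a purely metric notion with no reference to $p$, and that its independence of the particular choice of proper left-invariant metric is guaranteed by \cite[Lemma 2.1]{MR1871980}. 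The main obstacle is not located in the corollary at all but has been absorbed into Theorem~\ref{k-theory of locally finite} (via Proposition~\ref{grpRoe} and the inductive-limit computation); given that input, the only care needed is bookkeeping — checking that the $p$-independence holds at the level of abstract abelian groups, which is exactly what (2) and (3) require, so that the $p=2$ result can be leveraged for all $p$ without ever having to track the positive cone or the class $[\mathbf{1}]$.
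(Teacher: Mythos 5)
Your argument is correct, but it routes the hard implication differently from the paper. Both proofs start from Theorem~\ref{k-theory of locally finite}, which identifies $K_0(B^p_u(\Gamma))$ with $\ell^\infty(\mathbb{N},\mathbb{Z})/H_\Gamma$ independently of $p$, and this gives (2) $\Leftrightarrow$ (3) exactly as you say. Where you then specialize to $p=2$ and quote \cite[Corollary 4.9]{LL} wholesale for the equivalence with coarse equivalence, the paper instead proves the two remaining implications directly for all $p$: (1) $\Rightarrow$ (2) is deduced from the coarse invariance of $\ell^p$ uniform Roe algebras \cite[Theorem 3.4]{CL}, and (3) $\Rightarrow$ (1) is proved contrapositively using the fact from \cite[Corollary 8]{BZ} that any two countably infinite locally finite groups are coarsely equivalent --- so failure of coarse equivalence forces one group to be finite and the other infinite, whence one $K_0$ group is $\mathbb{Z}$ and the other is $\ell^\infty(\mathbb{N},\mathbb{Z})/H_\Gamma$, which is not singly generated. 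Your reduction buys brevity and makes transparent that the only genuinely new content beyond the $p=2$ case is the $p$-independence of the abstract $K_0$ group; the paper's version buys self-containedness and records the concrete finite-versus-infinite dichotomy that underlies the $p=2$ result you cite. Since the paper itself presents the corollary as a generalization of \cite[Corollary 4.9]{LL}, your citation is legitimate and no gap results; the one point worth making explicit is that Theorem~\ref{k-theory of locally finite} is stated for the metric coming from a chosen exhaustion by finite subgroups, so one should note (via \cite[Lemma 2.1]{MR1871980} and coarse invariance) that the computation applies to an arbitrary proper left-invariant metric before using it.
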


\begin{proof}
The implication (1) $\Rightarrow$ (2) is a consequence of \cite[Theorem 3.4]{CL}, while the implication (2) $\Rightarrow$ (3) is obvious, so it remains to show (3) $\Rightarrow$ (1).

Suppose $\Gamma$ and $\Lambda$ are not coarsely equivalent. Then by \cite[Corollary 8]{BZ} it must be the case that one of them is finite while the other is infinite since all countably infinite, locally finite groups are coarsely equivalent to each other. Assume without loss of generality that $\Lambda$ is finite. Then for every $p\in[1,\infty)$, we have $K_0(B^p_u(\Gamma))\cong\ell^\infty(\mathbb{N},\mathbb{Z})/H_\Gamma$ by the previous theorem, and $K_0(B^p_u(\Lambda))\cong\mathbb{Z}$. These two groups are not isomorphic, for instance because $\ell^\infty(\mathbb{N},\mathbb{Z})/H_\Gamma$ is not singly generated.
\end{proof}
Now we recall the notion of supernatural numbers and how it enables us to determine whether two countable, locally finite groups are bijectively coarsely equivalent. We will use this in the proof of Theorem \ref{thm:locfingrp}, which is the main result of this section.

Let $\Gamma$ be a countable, locally finite group, and let $\{p_1,p_2,\ldots\}$ be the set of all prime numbers listed in increasing order. For each $j\in\mathbb{N}$, define
\[ n_j=\sup\{m\in\mathbb{N}:p_j^m\;\text{divides}\;|F|\;\text{for some finite subgroup}\;F\;\text{of}\;\Gamma\}. \]
The sequence $\{n_j\}_{j=1}^\infty$ is called the supernatural number associated to $\Gamma$, which we denote by $s(\Gamma)$.
We usually think of a supernatural number $\{n_j\}_{j=1}^\infty$ as a formal product $\prod_{j=1}^\infty p_j^{n_j}$, so we say that $p_j^m$ divides $s(\Gamma)$ if $m\leq n_j$. We say that two supernatural numbers are equal if they are equal as sequences.

\begin{thm} (\cite[Theorem 5]{Protasov} and \cite[Theorem 3.10]{LL})\label{super}
Let $\Gamma$ and $\Lambda$ be countable, locally finite groups with proper left-invariant metrics $d_\Gamma$ and $d_\Lambda$ respectively. Then the following are equivalent:
\begin{enumerate}
\item $(\Gamma,d_\Gamma)$ and $(\Lambda,d_\Lambda)$ are bijectively coarsely equivalent.
\item $\Gamma$ and $\Lambda$ have the same supernatural number, i.e., $s(\Gamma)=s(\Lambda)$.
\end{enumerate}
\end{thm}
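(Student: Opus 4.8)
My plan is to exploit the fact, recalled just before the theorem, that the metric $d_\Gamma(g,h)=\min\{n:g^{-1}h\in\Gamma_n\}$ coming from nested finite subgroups $\{e\}=\Gamma_0\subseteq\Gamma_1\subseteq\cdots$ with $\Gamma=\bigcup_n\Gamma_n$ makes $(\Gamma,d_\Gamma)$ an \emph{ultrametric} space: the closed $R$-ball about $g$ is exactly the coset $g\Gamma_R$, of cardinality $k_R:=|\Gamma_R|$, and every subset of diameter at most $R$ lies inside a single such coset. Writing $v_p$ for the $p$-adic valuation, I would first record that $s(\Gamma)=\prod_p p^{\sup_n v_p(k_n)}$, with the suprema attained increasingly since $k_n\mid k_{n+1}$, and similarly $s(\Lambda)=\prod_p p^{\sup_m v_p(k'_m)}$ with $k'_m:=|\Lambda_m|$. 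Since any two proper left-invariant metrics on a countable group are bijectively coarsely equivalent by \cite[Lemma 2.1]{MR1871980}, and bijective coarse equivalence is transitive, I am free to pass to subsequences of $\{\Gamma_n\}$ and $\{\Lambda_m\}$ whenever convenient.

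For (1)$\Rightarrow$(2) I expect a short counting argument. Let $f\colon\Gamma\to\Lambda$ be a bijective coarse equivalence; then both $f$ and $f^{-1}$ are uniformly expansive. Given $R$, I would choose $S$ so that $d_\Gamma(x,y)\le R$ forces $d_\Lambda(f(x),f(y))\le S$. Each $\Gamma_R$-coset then has image of diameter at most $S$, hence lands inside a single $\Lambda_S$-coset by ultrametricity; so for a fixed $\Lambda_S$-coset $C$, any $\Gamma_R$-coset whose image meets $C$ maps entirely into $C$, which shows $f^{-1}(C)$ is a disjoint union of whole $\Gamma_R$-cosets. Bijectivity gives $|f^{-1}(C)|=|C|=k'_S$, whence $k_R\mid k'_S$. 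Thus $\sup_n v_p(k_n)\le\sup_m v_p(k'_m)$ for every prime $p$; running the same argument with $f^{-1}$ yields the reverse inequality, so $s(\Gamma)=s(\Lambda)$. The point is that bijectivity (not merely coarseness) is what upgrades a comparison of ball sizes to genuine divisibility.

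For (2)$\Rightarrow$(1) the plan is constructive. From $s(\Gamma)=s(\Lambda)$ a finitary valuation computation shows that for every $n$ there is $m$ with $k_n\mid k'_m$ and conversely; iterating and passing to subsequences, I would produce an interleaved divisibility chain
\[ k_{n_0}\mid k'_{m_0}\mid k_{n_1}\mid k'_{m_1}\mid k_{n_2}\mid\cdots \]
with $k_{n_0}=1$ and both index sequences strictly increasing. I would then aim to build a bijection $f\colon\Gamma\to\Lambda$ obeying, for every $j$, the two coherence conditions: (i) $f$ sends each $\Gamma_{n_j}$-coset into a single $\Lambda_{m_j}$-coset, and (ii) $f^{-1}$ sends each $\Lambda_{m_j}$-coset into a single $\Gamma_{n_{j+1}}$-coset. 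Condition (i) makes $f$ uniformly expansive and (ii) makes $f^{-1}$ uniformly expansive, so any such $f$ is automatically a bijective coarse equivalence. The relations $k_{n_j}\mid k'_{m_j}\mid k_{n_{j+1}}$ are precisely the counting constraints making (i) and (ii) compatible: a $\Lambda_{m_j}$-coset accommodates exactly $k'_{m_j}/k_{n_j}$ images of $\Gamma_{n_j}$-cosets, and all of these fit into one $\Gamma_{n_{j+1}}$-coset. I would construct $f$ as an increasing union of finite partial injections along this chain, alternately enlarging the domain to exhaust $\Gamma$ and the range to exhaust $\Lambda$.

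The hard part will be this back-and-forth construction: each newly matched block must be placed so as to honour (i) and (ii) \emph{at every super-level at once}, which amounts to building an isomorphism between the two coset trees of $\Gamma$ and $\Lambda$ up to a uniformly bounded shift in levels. The interleaved divisibility chain should guarantee that at each finite stage there is always unused room to extend without breaking earlier commitments, so that the limit is a total bijection with two-sided control; checking that this extension is always possible is where the real work lies. By contrast, (1)$\Rightarrow$(2) is the short argument above, and it is exactly the bijectivity there --- reflected in the exhaustiveness of the back-and-forth here --- that makes the full supernatural number, rather than just the ``infinite versus finite'' dichotomy governing plain coarse equivalence, the correct invariant.
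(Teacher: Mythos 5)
The paper does not prove this theorem at all: it is quoted as a known result from Protasov and from \cite[Theorem 3.10]{LL}, so your self-contained argument is necessarily a different route, and it is essentially correct. Your reduction to the subgroup-chain metric via \cite[Lemma 2.1]{MR1871980} is legitimate, and the (1)$\Rightarrow$(2) direction is complete as written: ultrametricity forces $f^{-1}(C)$ to be a disjoint union of whole $\Gamma_R$-cosets, bijectivity converts this into the divisibility $k_R\mid k'_S$, and taking $p$-adic valuations and suprema (noting that every finite subgroup sits inside some $\Gamma_n$, so $s(\Gamma)=\prod_p p^{\sup_n v_p(k_n)}$) gives $s(\Gamma)=s(\Lambda)$. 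For (2)$\Rightarrow$(1), the step you flag as ``the real work'' is in fact routine and never gets stuck, for exactly the reason you anticipate: setting $c_{2j}=k_{n_j}$ and $c_{2j+1}=k'_{m_j}$, you can interpolate arbitrary nested partitions so that both $\Gamma$ and $\Lambda$ carry a full chain of partitions with block sizes $c_0\mid c_1\mid c_2\mid\cdots$ (the even levels being the genuine $\Gamma_{n_j}$-cosets, the odd levels the genuine $\Lambda_{m_j}$-cosets). Since every level-$i$ block has exactly $c_i$ elements and splits into exactly $c_i/c_{i-1}$ level-$(i-1)$ blocks, any two level-$i$ blocks are isomorphic as filtered sets; moreover the increasing chain of blocks containing a fixed base point exhausts the whole group, because any two elements eventually lie in a common coset. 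Hence one builds the partition-preserving bijection by a one-directional greedy extension along this single exhausting chain (no genuine Cantor back-and-forth is needed), and such a bijection satisfies your (i) and (ii), hence is a bijective coarse equivalence. The only cases you leave implicit are the degenerate ones where one or both groups are finite, which are immediate. Compared with citing Protasov, your proof has the merit of making visible exactly where bijectivity enters --- the divisibility $k_R\mid k'_S$ --- which is the same mechanism the paper later exploits in the proof of Theorem \ref{thm:locfingrp}.
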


Finally, we come to the main result of this section, which generalizes \cite[Theorem 4.10]{LL}.

\begin{thm} \label{thm:locfingrp}
Let $\Gamma$ and $\Lambda$ be countable, locally finite groups with proper left-invariant metrics $d_\Gamma$ and $d_\Lambda$ respectively. Then the following conditions are equivalent:
\begin{enumerate}
\item[(1)] $(\Gamma,d_\Gamma)$ and $(\Lambda,d_\Lambda)$ are bijectively coarsely equivalent.
\item[(2)] For every $p\in[1,\infty)$, there is an isometric isomorphism $\phi:B^p_u(\Gamma)\rightarrow B^p_u(\Lambda)$ such that $\phi(\ell^\infty(\Gamma))=\ell^\infty(\Lambda)$.
\item[(3)] $B^p_u(\Gamma)$ and $B^p_u(\Lambda)$ are isometrically isomorphic for every $p\in[1,\infty)$.
\item[(4)] $B^p_u(\Gamma)$ and $B^p_u(\Lambda)$ are isometrically isomorphic for some $p\in[1,\infty)$.
\item[(5)] For some $p\in[1,\infty)$, there is an isometric isomorphism $\phi:B^p_u(\Gamma)\rightarrow B^p_u(\Lambda)$ such that $\phi(\ell^\infty(\Gamma))=\ell^\infty(\Lambda)$.
\item[(6)] $(K_0(B^p_u(\Gamma)),K_0(B^p_u(\Gamma))^+,[1]_0)\cong(K_0(B^p_u(\Lambda)),K_0(B^p_u(\Lambda))^+,[1]_0)$ for every $p\in[1,\infty)$.
\item[(7)] $(K_0(B^p_u(\Gamma)),K_0(B^p_u(\Gamma))^+,[1]_0)\cong(K_0(B^p_u(\Lambda)),K_0(B^p_u(\Lambda))^+,[1]_0)$ for some $p\in[1,\infty)$.
\item[(8)] $(K_0(B^p_u(\Gamma)),[1]_0)\cong(K_0(B^p_u(\Lambda)),[1]_0)$ for every $p\in[1,\infty)$.
\item[(9)] $(K_0(B^p_u(\Gamma)),[1]_0)\cong(K_0(B^p_u(\Lambda)),[1]_0)$ for some $p\in[1,\infty)$.
\end{enumerate}
\end{thm}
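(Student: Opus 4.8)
The plan is to exploit two facts already in hand. By Theorem \ref{k-theory of locally finite} the pointed ordered group $(K_0(B^p_u(\Gamma)),K_0(B^p_u(\Gamma))^+,[1]_0)$ is canonically isomorphic to $(\ell^\infty(\mathbb{N},\mathbb{Z})/H_\Gamma,\ell^\infty(\mathbb{N},\mathbb{Z})^+/H_\Gamma,[\mathbf{1}])$ \emph{independently of} $p$, and by Theorem \ref{super} bijective coarse equivalence of $\Gamma$ and $\Lambda$ is detected by the supernatural number $s(\cdot)$. Consequently each of the $K$-theoretic conditions (6)--(9) holds for some $p$ if and only if it holds for every $p$ if and only if it holds at $p=2$; after this reduction the equivalence of (1) with (6)--(9) is exactly the content of the $C^\ast$-algebra result \cite[Theorem 4.10]{LL}. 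I would therefore run a single cycle that threads the isometric-isomorphism conditions (2)--(5) through this reduction.

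First I would prove (1) $\Rightarrow$ (2). Given a bijective coarse equivalence $f:\Gamma\to\Lambda$, let $U_f:\ell^p(\Gamma)\to\ell^p(\Lambda)$ be the invertible isometry determined by $\delta_g\mapsto\delta_{f(g)}$, and set $\phi=\mathrm{Ad}(U_f)$, i.e. $\phi(T)=U_fTU_f^{-1}$. This is an isometric isomorphism $B(\ell^p(\Gamma))\to B(\ell^p(\Lambda))$ with $(\phi(T))_{f(g),f(h)}=T_{g,h}$, so it carries diagonal operators to diagonal operators and hence $\ell^\infty(\Gamma)$ onto $\ell^\infty(\Lambda)$. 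Because $f$ is a \emph{bijective} coarse equivalence, both $f$ and $f^{-1}$ are uniformly expansive, so $\phi$ and $\phi^{-1}$ send finite-propagation operators to finite-propagation operators; passing to norm closures yields an isometric isomorphism $\phi:B^p_u(\Gamma)\to B^p_u(\Lambda)$ with $\phi(\ell^\infty(\Gamma))=\ell^\infty(\Lambda)$, valid for every $p$. This is the one place where a concrete construction is needed.

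The remaining implications are formal. The implications (2) $\Rightarrow$ (3), (2) $\Rightarrow$ (5), (3) $\Rightarrow$ (4), (5) $\Rightarrow$ (4), (6) $\Rightarrow$ (7), (6) $\Rightarrow$ (8), (7) $\Rightarrow$ (9) and (8) $\Rightarrow$ (9) are all immediate weakenings. Functoriality of $K_0$ (an algebra isomorphism induces an isomorphism of $K_0$ respecting the positive cone and the class of the unit) gives (4) $\Rightarrow$ (7). To close the cycle I would use (9) $\Rightarrow$ (1): by Theorem \ref{k-theory of locally finite} an isomorphism $(K_0(B^p_u(\Gamma)),[1]_0)\cong(K_0(B^p_u(\Lambda)),[1]_0)$ for some $p$ yields $(\ell^\infty(\mathbb{N},\mathbb{Z})/H_\Gamma,[\mathbf{1}])\cong(\ell^\infty(\mathbb{N},\mathbb{Z})/H_\Lambda,[\mathbf{1}])$, and the corresponding unordered pointed-group condition in \cite[Theorem 4.10]{LL} then forces $s(\Gamma)=s(\Lambda)$, hence (1) by Theorem \ref{super}. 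Finally (1) $\Rightarrow$ (6) runs the same reduction forward: Theorem \ref{super} gives $s(\Gamma)=s(\Lambda)$, \cite[Theorem 4.10]{LL} gives the ordered pointed isomorphism at $p=2$, and Theorem \ref{k-theory of locally finite} transports it to every $p$.

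The main obstacle is conceptual rather than computational: one must ensure that the \emph{unordered} pointed group $(\ell^\infty(\mathbb{N},\mathbb{Z})/H_\Gamma,[\mathbf{1}])$ still determines $s(\Gamma)$, so that even the weakest conditions (8)--(9) recover (1). If \cite[Theorem 4.10]{LL} is stated only for the ordered invariant, I would argue this directly: the class $[\mathbf{1}]$ together with the divisibility structure of $\ell^\infty(\mathbb{N},\mathbb{Z})/H_\Gamma$ (reflecting $k_n\mid k_{n+1}$) encodes exactly which prime powers $p_j^m$ divide $s(\Gamma)$, and any isomorphism of pointed groups must match these divisibility data. Everything else---the passage between different values of $p$ and between the isometric and $K$-theoretic pictures---is handled by the $p$-independence in Theorem \ref{k-theory of locally finite} together with plain functoriality, so no new analytic input beyond the $p=2$ case and Theorem \ref{k-theory of locally finite} is required.
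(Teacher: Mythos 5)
Your proposal is correct and takes essentially the same route as the paper: the substantive step is (9) $\Rightarrow$ (1), where both you and the paper detect the supernatural number from divisibility of the class $[\mathbf{1}]$ by prime powers in $\ell^\infty(\mathbb{N},\mathbb{Z})/H_\Gamma$ (the paper writes out the block-sum computation $p^r(m_1+\cdots+m_{k_n'})=k_n'$ that you only sketch, but your sketch is the right argument). The remaining implications are, as in the paper, formal weakenings plus $K_0$-functoriality and the $p$-independence of Theorem~\ref{k-theory of locally finite}; your explicit conjugation-by-$U_f$ construction for (1) $\Rightarrow$ (2) is a correct substitute for the references the paper cites for the equivalence of (1)--(5).
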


\begin{proof}
The equivalences of statements (1) through (5) follow from \cite[Theorem 2.6]{CL}, \cite[Theorem 4.10]{LL} and \cite[Lemma 8]{MR0043392}.
It is obvious that (3) $\Rightarrow$ (6) $\Rightarrow$ (8) $\Rightarrow$ (9), and that (6) $\Rightarrow$ (7) $\Rightarrow$ (9). Thus, it remains to show that (9) $\Rightarrow$ (1).

The proof of (9) $\Rightarrow$ (1) is identical to the proof of \cite[Theorem 4.10]{LL}, but we include the details here for the convenience of the reader. For simplicity, we write $A$ for $B^p_u(\Gamma)$ and $B$ for $B^p_u(\Lambda)$.
Let $\phi:(K_0(A),[1_A]_0)\rightarrow(K_0(B),[1_B]_0)$ be an isomorphism. Suppose, for the purpose of contradiction, that $(\Gamma,d_\Gamma)$ and $(\Lambda,d_\Lambda)$ are not bijectively coarsely equivalent. Then the associated supernatural numbers $s(\Gamma)$ and $s(\Lambda)$ are different by Theorem \ref{super}. Without loss of generality, we may assume that there is a prime number $p$ and a positive integer $r$ such that $p^r$ divides $s(\Gamma)$ but not $s(\Lambda)$.
Let $\{\Gamma_n\}_{n=0}^\infty$ and $\{\Lambda_n\}_{n=0}^\infty$ be increasing sequences of finite subgroups of $\Gamma$ and $\Lambda$ respectively, with $\Gamma=\bigcup_{n=0}^\infty\Gamma_n$ and $\Lambda=\bigcup_{n=0}^\infty\Lambda_n$.
Define $k_n=|\Gamma_n|$, $H_\Gamma^{(n)}$, and $H_\Gamma$ as in Theorem \ref{k-theory of locally finite}. Similarly, define $k_n'=|\Lambda_n|$, $H_\Lambda^{(n)}$ and $H_\Lambda$.

Since $p^r$ divides $s(\Gamma)$, it divides $k_n$ for some $n$. Let $q$ be any idempotent in $\prod_{i=1}^\infty M_{k_n}(\mathbb{C})$ with pointwise rank $k_n/p^r$. Then $p^r([q]_0)=[1_A]_0$ in $K_0(A)$. Let $[q']_0=\phi([q]_0)\in K_0(B)$. Then $p^r([q']_0)=[1_B]_0$. Write $[q']_0=[(m_1,m_2,\ldots)]\in\ell^\infty(\mathbb{N},\mathbb{Z})/H_\Lambda$. Then $(p^rm_1-1,p^rm_2-1,\ldots)\in H_\Lambda^{(n)}$ for some positive integer $n$. In particular, $p^r(m_1+m_2+\cdots+m_{k_n'})-k_n'=0$, which is impossible because $p^r$ does not divide $k_n'$ for it would divide $s(\Lambda)$ otherwise, so we have reached a contradiction.
\end{proof}

\ \newline
{\bf Acknowledgments}.
We would like to thank Hung-Chang Liao, N. Christopher Phillips and Rufus Willett for helpful and enlightening discussions.


\bibliographystyle{plain}
\bibliography{mybib}

\end{document}